\documentclass[12pt]{amsart}
\usepackage{amssymb}
\usepackage{amsmath}
\usepackage{xcolor}
\usepackage{enumitem}
\usepackage{fullpage}
\usepackage{appendix}
\usepackage{float}
\usepackage{array}

\usepackage[unicode,hypertexnames=false]{hyperref}
\usepackage{bookmark}

\hypersetup{
	colorlinks=true,
	linkcolor=blue!60!black,
	citecolor=blue!60!black,
	urlcolor=blue!60!black
}

\allowdisplaybreaks
\newtheorem{thm}{Theorem}[section]
\newtheorem{prop}[thm]{Proposition}
\newtheorem{cor}[thm]{Corollary}
\newtheorem{lem}[thm]{Lemma}
\newtheorem{rem}[thm]{Remark}
\newtheorem{defn}[thm]{Definition}

\numberwithin{equation}{section}

\newcommand{\C}{\mathbb{C}}
\newcommand{\N}{\mathbb{N}}
\newcommand{\Z}{\mathbb{Z}}

\newcommand{\Res}{\mbox{\rm Res}}
\newcommand{\Span}{\mbox{\rm Span}}
\newcommand{\End}{\mbox{\rm End}}
\newcommand{\Hom}{\mbox{\rm Hom}}
\newcommand{\wt}{\mathrm{wt} \, }

\title[Generators and relations for $A_n(V)$]{On generators and relations for $O_n^\circ(V)$ in higher-level Zhu algebras, Part II}

\author{Nathan Vander Werf}
\address{Department of Mathematics and Statistics, University of Nebraska at Kearney, Kearney, NE 68849}
\email{vanderwerfnp@unk.edu}
\urladdr{\href{https://orcid.org/0009-0007-7374-9345}{https://orcid.org/0009-0007-7374-9345}}

\subjclass[2020]{Primary 17B69; Secondary 17B68}

\date{October 25, 2026}

\keywords{Vertex operator algebras, higher-level Zhu algebras, circle relations, strong generators}

\begin{document}

\begin{abstract}
We study circle relations in the higher-level Zhu algebra of a vertex
operator algebra $V$. For regular vacuum words in homogeneous letters, we
prove the termwise correction-residue conjecture of Part~I: every correction
in the one-sided iterate expansion belongs separately to the finite-length
plus subspace. The first correction formed with the left truncated field
vanishes identically. Multiplying the diagonal commutator kernel by a
power of $1+x$ turns it into a Laurent polynomial. This proves the
vanishing and writes every other correction as a finite combination of
product residues with fewer $Y_L$ factors; these residues come from the
word obtained by replacing one negative mode by a nonnegative one. These results hold at all levels
$n\ge0$, without an auxiliary-degree hypothesis.

For $n\ge1$ and a homogeneous ordered pair $(u,v)$, two explicit triangular
families span the quadratic residue operators modulo the length-one and
singular-iterate relations, giving
\[
\dim\mathcal Q_n((u,v))\le n(n+1).
\]
The universal product-coefficient arrays have dimension exactly $n(n+1)$
and three natural bases; restriction to the regular mode square is an
isomorphism. This coefficient statement proves operator spanning and gives
invertible matrices for two complementary diagonal eliminations.
For equal inputs, mode interchange reduces the operator families to their
odd-indexed members; its scalar coefficient action is an involution. This yields
\[
\dim\mathcal Q_n((u,u))
\le 2\left\lfloor\frac{(n+1)^2}{4}\right\rfloor.
\]
The pairwise estimates give an explicit global quadratic bound for any
finite homogeneous strong generating set. Heisenberg, Virasoro, and affine
$\mathfrak{sl}_2$ examples illustrate equality and strict inequality in
the operator bounds. The correction theorem and quadratic eliminations
provide the inputs for the higher-length reductions in Part~III.
\end{abstract}

\maketitle
\tableofcontents
\clearpage

\section{Introduction}
\label{sec:introduction}

For a vertex operator algebra $V$, the higher-level Zhu algebra
$A_n(V)=V/O_n(V)$ records the representation theory of $V$ visible at
level $n$ \cite{DLM,Z2}. By definition \cite{DLM}, $O_n(V)$ is spanned by
the level-$n$ circle products and the vectors $(L(-1)+L(0))v$; in the
notation of \cite[(2.2)--(2.3)]{AB},
\[
O_n(V)=O^L(V)+O_n^\circ(V),
\]
where $O^L(V)$ is spanned by these translation relations and
$O_n^\circ(V)$ by the level-$n$ circle products. Explicit presentations
of $A_n(V)$ therefore require a workable description of $O_n^\circ(V)$.
At positive level a single circle relation contains many modes, and
relations attached to iterated vacuum words involve both truncation and
commutator terms.

Explicit computations for the Heisenberg and Virasoro vertex operator
algebras include the level-one algebras of
\cite{BVY-Heisenberg,BVY-Virasoro} and the level-two Heisenberg algebra
of \cite{AB2}. The present paper isolates the circle relations used in
such presentations, before imposing the translation relations in
$O^L(V)$. We use the formal calculus and VOA identities of
\cite[Chs.~2--3]{LL}. For further background on higher-level Zhu algebras
we refer to the introduction of Part~I \cite{Part1}.

Part~I introduced the spaces $O_n^\circ(N,S,V)$ of circle
relations attached to words of length at most $N$ in homogeneous strong
generators $S$ \cite[Definition~4.5]{Part1}. Its left truncated field $Y_L$
\cite[Definition~2.14]{Part1} permits a one-sided
iterate expansion into an ordered product and individual correction
terms \cite[Proposition~3.11]{Part1}. Two questions then arise: do those corrections already belong
to the plus subspace, and how many product relations are needed? We
answer the first question at every word length and the second for
quadratics. The finite-length conjecture of Part~I \cite[Conjecture~4.9]{Part1} remains the subject
of Part~III \cite{Part3}, where its auxiliary-degree hypothesis is used. Neither
the correction theorem nor the quadratic results here requires that
hypothesis.

\subsection*{Correction residues at every word length}

For a regular word
$w=s^{(1)}_{-r_1}\cdots s^{(N)}_{-r_N}\mathbf1$, with $r_j\ge1$,
the plus subspace $O_n^\circ(N,S,V)^+$ contains relations from shorter
words and from words of length $N$ with a nonnegative mode before
the last letter. The one-sided expansion in Part~I has one correction
for each position of $w$ before the last letter;
\cite[Conjecture~4.6]{Part1} asserts membership of each correction
separately in this plus subspace.

\begin{thm}[Termwise correction residues]
\label{thm:main-correction-results}
Let $n\ge0$, let $S$ be a finite homogeneous subset of $V$, and let
$w$ be a regular word of length $N\ge2$ in letters of $S$. Let
$m\ge k\ge0$ and $z\in V$.
\begin{enumerate}[label=\textup{(\roman*)}]
\item The first correction, formed with $Y_L$ as in
\eqref{eq:word-correction-piece} with $t=0$, vanishes:
\[
(wR^n_{m,k}z)_{\mathrm m_0}=0
\qquad\text{in }V.
\]
\item Each correction residue in the one-sided iterate expansion, and each
of its summands with a fixed singular index, belongs separately to the
subspace $O_n^\circ(N,S,V)^+$. Consequently the full residue and its ordered product
representative satisfy
\[
wR^n_{m,k}z\equiv_{O_n^\circ(N,S,V)^+}
(wR^n_{m,k}z)_{\mathrm p}.
\]
\end{enumerate}
\end{thm}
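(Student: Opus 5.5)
The plan follows the abstract's outline, using the one-sided iterate expansion of \cite[Proposition~3.11]{Part1}. For $w=s^{(1)}_{-r_1}\cdots s^{(N)}_{-r_N}\mathbf 1$ that expansion writes
\[
wR^n_{m,k}z=(wR^n_{m,k}z)_{\mathrm p}+\sum_{t=0}^{N-2}(wR^n_{m,k}z)_{\mathrm m_t}.
\]
Here $(wR^n_{m,k}z)_{\mathrm m_t}$ is the correction attached to position $t+1$. It is a residue in which the commutator of $Y_L(s^{(t+1)},x)$ with the remaining product of fields appears, weighted by the level-$n$ kernel
\[
\frac{(1+x)^{\wt s+n}}{x^{2n+2+m-k}}\,(1+x)^{\dots}
\]
coming from $R^n_{m,k}$. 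The argument treats this kernel directly, for each $t$.

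\textbf{Step 1 (diagonal kernel becomes Laurent).} By the commutator formula, the commutator of a mode of $Y_L(s^{(t+1)},x_1)$ with $Y(s^{(j)},x_2)$ is a finite sum over singular indices $i\ge0$. Each summand is $\partial_{x_2}^{(i)}x_2^{-1}\delta(x_1/x_2)$ times the field $Y(s^{(t+1)}_is^{(j)},x_2)$, truncated on one side. Once the $Y_L$ truncation is accounted for, the diagonal specialization $x_1=x_2=x$ of the kernel carries a factor $(1+x)^{-a}$ with $a$ bounded by the weights. I will show that multiplying by $(1+x)^{M}$ for $M\ge a$ (for example $M=\wt s^{(t+1)}+n$) and using $\partial^{(i)}$-Leibniz in the $\delta$-function yields a genuine Laurent polynomial in $x$, with no infinite binomial tails. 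This is the crucial computation, and I expect it to be the main obstacle. The exponents from $R^n_{m,k}$, the shift $-r_{t+1}$, and the $Y_L$ cutoff must be tracked so that the $(1+x)$ power is large enough uniformly in $i$. I would first verify the case $N=2$, $t=0$ explicitly, then induct on the position $t$.

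\textbf{Step 2 (part (i)).} For $t=0$ the left truncated field $Y_L(s^{(1)},x)$ is applied to the whole remaining word $s^{(2)}_{-r_2}\cdots\mathbf 1$. After Step 1 the residue has the form $\Res_x$ of a Laurent polynomial times $(1+x)^{M}$ times an expression in $x$ that is regular at $x=0$. The remaining negative powers of $x$ are those forced by $Y_L$, and these cancel exactly against the truncation defining $Y_L$. The coefficient of $x^{-1}$ therefore vanishes identically, giving $(wR^n_{m,k}z)_{\mathrm m_0}=0$ in $V$. Concretely, $Y_L$ keeps only the modes $s_j$ with $j<0$. The correction pairs these modes with a kernel whose $x^{-1}$ coefficient only sees modes $j\ge0$, so the two index ranges are disjoint.

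\textbf{Step 3 (part (ii)).} For $t\ge1$, fix a singular index $i$. By Step 1, the summand is a finite $\C$-combination of residues $\Res_x x^{p}(1+x)^{q}\,Y(\cdot,x)\cdots z$. Each of these is a product residue, in the sense of \cite[Definition~4.5]{Part1}, of the word
\[
w'=s^{(1)}_{-r_1}\cdots s^{(t)}_{-r_t}\,\bigl(s^{(t+1)}_{i}s^{(j)}\bigr)_{\dots}\cdots\mathbf 1 ,
\]
in which one negative mode has been replaced by the nonnegative mode $i$. This word has either one fewer $Y_L$ factor or a nonnegative mode before its last letter. In either case its circle relations lie in $O_n^\circ(N,S,V)^+$ by definition: it is a shorter word, or a length-$N$ word with a nonnegative mode. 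Expanding $s^{(t+1)}_is^{(j)}$ in $S$-words uses only the strong generation of $S$, and by the weight grading this stays within length $N$. Hence each singular-index summand, and so each correction, lies in the plus subspace. Summing over $t$, and using part (i) for $t=0$, gives $wR^n_{m,k}z\equiv(wR^n_{m,k}z)_{\mathrm p}$. None of these steps uses the auxiliary-degree hypothesis, and all of them hold uniformly for $n\ge0$.
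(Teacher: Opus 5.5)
Your outline has the right shape: normalize the diagonal kernel, then pass to the word in which one negative mode becomes the nonnegative mode $i$. But both parts have a genuine gap.

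\textbf{Part (i).} Step~2 rests on a false description of $Y_L$. By \eqref{eq:YL-pform-def}, $Y_L(u,x)=\sum_{j\ge-2n-1}u_j\,p_{j,1,u}(x)$ keeps every nonnegative mode, so there is no disjointness of index ranges to exploit. The vanishing instead comes from two upper degree bounds, neither of which your Step~1 isolates.

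First, the normalizing exponent must be $r_{t+1}+i$, which equals $\wt w-\wt y$ for the contracted word $y$ of \eqref{eq:contracted-word}. Your choice $\wt s^{(t+1)}+n$ leaves the factor $(1+x)^{\wt s^{(t+1)}+n-r_{t+1}-i}$ in front, which is a negative power once $i$ is large. Moreover, what matters is not only that $G^{(n)}_{i,r,u}=(1+x)^{r+i}F^+_{i,r,u}(x,x)$ is a Laurent polynomial, but that it has no positive powers of $x$ (Lemma~\ref{lem:G-support}). Then $(1+x)^{\wt w+n+k}F^+_{i,r_{t+1},s^{(t+1)}}(x,x)=(1+x)^{\wt y+n}(1+x)^kG^{(n)}_{i,r_{t+1},s^{(t+1)}}(x)$, with the last two factors of degree at most $k$. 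Hence each correction is a combination of residues $(yR^n_{m-\ell,0}z)_{\mathrm p}$ with $m-\ell\ge0$. A single positive power in $G$ would produce $m-\ell=-1$ when $m=k$, and that residue is generally nonzero.

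Second, for $t=0$ these are one-input residues. They vanish because $(1+x)^{\wt y+n}Y_L(y,x)z$ has no powers above $x^{2n}$ \eqref{eq:one-input-product-piece}.

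\textbf{Part (ii).} In Step~3 the normalized summands are product pieces $(yR^n_{M,0}z)_{\mathrm p}$. These are residues of the ordered product $Y_L(v,x)Y_L(s^{(t)}_{-r_t}\mathbf1,x)\cdots Y_L(s^{(1)}_{-r_1}\mathbf1,x)z$ with $v=s^{(t+1)}_iw^{(t+2)}$. They are not the circle relations $yR^n_{M,0}z$ built from $Y(y,x)$, and $O_n^\circ(N,S,V)^+$ is spanned by the latter. So membership is not ``by definition.''

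The missing idea is an induction on the number of $Y_L$ factors (Lemma~\ref{lem:contracted-products-tracked}). Apply Lemma~\ref{lem:word-residue-split} to $y$ itself. The product piece of $y$ then equals the full residue of $y$, minus an element of $U_y$, minus the corrections of $y$. The full residue of $y$ is a length-$N$ word in the original letters with a nonnegative mode before its last letter. The corrections of $y$ are again product pieces of further contracted words with fewer $Y_L$ factors. The base case is the one-input vanishing above.

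You must also check that $U_w$ and each $U_y$ lie in the plus subspace, because the split holds only modulo these subspaces; you wrote it as an equality. This check works: the length-one slices and the slices of proper suffixes come from shorter words, and the $t=0$ singular slice is a length-$N$ word with a nonnegative mode.

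Finally, drop the expansion of $s^{(t+1)}_is^{(j)}$ into $S$-words. The singular input is $s^{(t+1)}_iw^{(t+2)}$, and the contracted word is already a displayed word in the letters of $S$. The theorem does not assume that $S$ strongly generates $V$. Also, the weight grading does not in general keep such an expansion within length $N$.
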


Part~I's conjecture assumes that $S$ strongly generates $V$; the theorem
does not. Part~\textup{(i)} uses only that the letters of $w$ are
homogeneous; for a pair $w=u_{-r}v$ it is the vanishing of the commutator
contribution \eqref{eq:comm-piece-def-mk}. Part~\textup{(i)} is
Corollary~\ref{prop:Rcomm-strip-vanish-main}\textup{(ii)}. With the full
field $Y$, the first correction instead belongs to the corresponding
singular-iterate relation space
(Corollary~\ref{prop:Rcomm-strip-vanish-main}\textup{(iii)}).

The precise correction formulas and the proof of part~\textup{(ii)} are given in
Theorem~\ref{thm:correction-residues-plus}. The proof begins with the
complete diagonal kernel, including its nonnegative-mode difference
tail. Multiplying $F^+_{i,r,u}(x,x)$ by $(1+x)^{r+i}$ produces a
Laurent polynomial supported in degrees $-r-i,\ldots,0$.
This support bound expresses every correction as a finite sum of product
residues of a contracted word, obtained by replacing one negative mode
of $w$ by a nonnegative one, with fewer $Y_L$ factors. For the
correction with $t=0$, in which the first letter $s^{(1)}$ carries the
nonnegative mode, these residues vanish; this gives part~\textup{(i)}.
Contracted words are themselves words in the letters of $S$ with
a nonnegative mode, so an induction on the number of $Y_L$ factors
identifies all errors as relations of shorter words or of singular words
on the original letters. At level zero all the $Y_L$ product and
correction residues vanish.

\subsection*{Quadratic spanning and coefficient bases}

Fix $n\ge1$ and homogeneous $u,v\in V$. The subspace attached to the pair is
\begin{equation}\label{eq:intro-U-def}
U=O_n^\circ(u,V)+O_n^\circ(v,V)
+\sum_{i\ge0}O_n^\circ(u_i v,V).
\end{equation}
The ordered-pair operator space is
\begin{equation}\label{eq:intro-Q-def}
\begin{aligned}
\mathcal Q_n((u,v)):=\Span_\C\bigl\{
&z\longmapsto (u_qv_{-r}\mathbf1)R^n_{m,k}z+U:\\
&q\in\Z,\ r\ge1,\ m\ge k\ge0\bigr\}\subseteq\Hom(V,V/U).
\end{aligned}
\end{equation}
Equality of induced operators means that their difference takes every
input into $U$. After the last-mode reduction of Part~I, the pair
problem reduces to the products $(u_{-r}v)R^n_{m,k}$ modulo $U$.

We use three triangular families throughout; they are restated with
labels as \eqref{eq:A-strip}--\eqref{eq:C-strip} in
Section~\ref{subsec:formal-independence}:
\begin{align*}
\mathrm{(A)}&=
\left\{(u_{-r}v)R^n_{m,0}(\cdot):
n\le m\le2n-1,\ 1\le r\le2n-m\right\},\\
\mathrm{(B)}&=
\left\{(u_{-r}v)R^n_{m,n}(\cdot):
n\le m\le2n-1,\ 1\le r\le2n-m\right\},\\
\mathrm{(C)}&=
\left\{(u_{-r}v)R^n_{m,0}(\cdot):
0\le m\le n-1,\ 1\le r\le n-m\right\}.
\end{align*}
Each family has $\binom{n+1}{2}$ members. Families
$\mathrm{(A)}$ and $\mathrm{(B)}$ give the principal operator spanning
list; $\mathrm{(A)}$ and $\mathrm{(C)}$ give the two diagonal
eliminations used in the higher-length calculations.

\begin{thm}[Quadratic spanning and formal rank]
\label{thm:main-quadratic-results}
Let $n\ge1$ and let $u,v\in V$ be homogeneous.
\begin{enumerate}[label=\textup{(\arabic*)}]
\item The images of $\mathrm{(A)}\cup\mathrm{(B)}$ span
$\mathcal Q_n((u,v))$. Hence
\[
\dim\mathcal Q_n((u,v))\le n(n+1).
\]
\item The span of the complete scalar product-coefficient arrays has
dimension $n(n+1)$. Restriction to the regular mode square
$-2n-1\le a,b\le-1$ identifies it with
$\mathcal S_{\mathrm{prod,reg}}(n;u,v)$. Each of
\[
\mathcal A\cup\mathcal B,\qquad
\mathcal C\cup\mathcal B,\qquad
\mathcal A\cup\mathcal C
\]
is a basis, where $\mathcal A,\mathcal B,\mathcal C$ are the coefficient
vectors of the families \textup{(A)}, \textup{(B)}, \textup{(C)}
\eqref{eq:formal-ABC-seeds}.
\end{enumerate}
\end{thm}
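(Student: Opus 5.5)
The plan is to turn both parts into statements about finite scalar arrays by writing every quadratic residue through the one-sided iterate expansion. For $z\in V$, Part~I's expansion \cite[Proposition~3.11]{Part1} (the case $N=2$ of Theorem~\ref{thm:main-correction-results}, with $w=u_{-r}v\mathbf 1$) gives
\[
(u_{-r}v)R^n_{m,k}z\equiv_U \sum_{a,b}c^{(r)}_{m,k}(a,b)\,u_a v_b z .
\]
The sum runs over the nonpositive product modes, and the coefficients $c^{(r)}_{m,k}(a,b)$ are explicit. They are products of binomial coefficients in $\wt u$, $\wt v$, $n$, $k$, $m$ and $r$, obtained by extracting the residue of
\[
x_0^{-r}(1+x)^{\wt(u_{-r}v)+n+k}x^{-2n-2-m}
\]
after expanding $(x_0+x)$. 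By Theorem~\ref{thm:main-correction-results}(ii), the singular terms $u_iv$ with $i\ge0$ and the commutator correction are absorbed into $U$, so modulo $U$ only the ordered product part survives. These coefficient vectors are the ``complete scalar product-coefficient arrays''. Everything then reduces to linear algebra on the span $\mathcal S_{\mathrm{prod}}$ of the arrays, together with the passage from arrays back to operators.

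For (2), I would first show that every array is determined by its entries on the regular square $-2n-1\le a,b\le-1$, so that the restriction map is injective. The entries outside the square are forced by those inside, because the factor $x^{-2n-2-m}$ truncates the $a$-expansion and the weight binomials vanish beyond the range. Next I would check that the $n(n+1)$ vectors of $\mathcal A\cup\mathcal B$ are linearly independent. I would order the entries by the lowest nonzero mode pair. The family (A), indexed by $(m,r)$ with $n\le m\le 2n-1$ and $r\le 2n-m$, has its leading entry at $(a,b)=(-(2n-m)-1+\dots,\,-r)$; the family (B) differs from (A) by the factor $(1+x)^n$, which moves its leading entries into a disjoint part of the square. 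The resulting matrix is block triangular with binomial diagonal entries, which are nonzero. An upper bound of $n(n+1)$ for $\dim\mathcal S_{\mathrm{prod}}$ comes from the identity
\[
\frac{(1+x)^{k+1}}{x^{m+2n+2}}=\frac{(1+x)^k}{x^{m+2n+2}}+\frac{(1+x)^k}{x^{m+2n+1}}.
\]
This gives $R^n_{m,k+1}=R^n_{m,k}+R^n_{m-1,k}$ at the level of arrays, and it lets me push every $(m,k,r)$ into the two strips, using that arrays with $m\ge 2n$ or $r>2n-m$ vanish on the regular square. With independence this gives dimension exactly $n(n+1)$, and restriction becomes an isomorphism onto $\mathcal S_{\mathrm{prod,reg}}(n;u,v)$. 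For $\mathcal C\cup\mathcal B$ and $\mathcal A\cup\mathcal C$, I would write the change-of-basis matrices from $\mathcal A\cup\mathcal B$ explicitly via the same Pascal recursion and show they are unitriangular after a suitable ordering.

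Part (1) then follows by lifting the spanning statement from arrays to operators. Each identity among arrays, namely the Pascal recursion and the vanishing for large $m$ or $r$, is realized by an identity of residues in $V$ modulo $U$. The coefficient expansion above is linear in the kernel, and the product terms it drops lie in $O_n^\circ(u,V)$ or $O_n^\circ(v,V)$, so each array identity holds termwise on $z$. Hence every generator of $\mathcal Q_n((u,v))$ is a combination of (A)$\cup$(B) modulo $U$, which gives $\dim\mathcal Q_n((u,v))\le n(n+1)$.

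The main obstacle is the triangularity and independence computation in the regular square. One has to identify the leading entries precisely and confirm that the diagonal binomials, which depend on $\wt u$ and $\wt v$, never vanish for homogeneous inputs. I would first try the ordering by $b$ and then by $a$, and if some diagonal entry degenerates, fall back to a generating-function determinant in $x_0,x$.
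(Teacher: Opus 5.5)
There is a genuine gap. Both linear-algebra steps rest on claims that are false, so neither the bound $n(n+1)$ nor independence is established.

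\textbf{The upper bound.} You use that arrays with $m\ge2n$ or $r>2n-m$ vanish on the regular square. They do not. Recall
$C^{(n)}_{m,k}(r;a,b)=[x^{m+2n+1}]\bigl((1+x)^{k-n}\Phi_{a,1,v}(x)\Phi_{b,r,u}(x)\bigr)$. At $n=1$ we have $\Phi_{-3,1,v}=\Phi_{-3,1,u}=x^2$, so $C^{(1)}_{2,0}(1;-3,-3)=[x^1](1+x)^{-1}=-1$. Similarly $C^{(1)}_{0,0}(3;-3,-3)=-1-2\wt u$, which is nonzero for every integral weight.

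Your Pascal identity is a correct reduction to $k=0$, but it never changes $r$. Without the false vanishing you therefore have no relations between different descendant orders. Yet infinitely many values of $r$ must collapse into an $n(n+1)$-dimensional span.

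\textbf{The lift to operators.} A bound proved only on the regular square would not give part (1) in any case. The product piece is $\sum_{a,b\ge-2n-1}C^{(n)}_{m,k}(r;a,b)\,v_au_bz$. It has the $v$-modes on the left and includes nonnegative modes; it is not a sum over nonpositive modes of $u_av_bz$. Operator identities therefore need identities of the \emph{full} arrays. You give no reason why the off-square terms would lie in $O_n^\circ(u,V)+O_n^\circ(v,V)$.

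\textbf{Injectivity of restriction.} This does not follow from truncation, because off-square entries need not vanish. In the paper it follows from the full-array bound $\le n(n+1)$ together with independence of the $n(n+1)$ restricted seeds.

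\textbf{Independence.} Ordering by leading entries fails; the problem is not a degenerate diagonal entry but colliding leading positions. Write $a=-2n-1+p$ and $b=-2n-1+q$. On the regular square, a seed $(m,k,r)$ vanishes below the diagonal $p+q=2n-m-r$ and equals $\binom{2n-q}{r-1}$ on it. Since $(1+x)^{k-n}$ has constant term $1$, this holds for both $k=0$ and $k=n$. So the factor $(1+x)^n$ does not separate the leading entries of (B) from those of (A) at all.

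Even within (A), $n-t$ seeds lead on a diagonal with only $t+1$ positions. At $n=2$ the seeds $(m,r)=(2,2)$ and $(3,1)$ both lead at $(a,b)=(-5,-5)$, with entries $4$ and $1$, as in the first column of $K_A^{(2)}$. The entries that actually separate these seeds depend on $\wt u$ and $\wt v$.

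\textbf{The missing idea} is the weight removal of Lemma~\ref{lem:formal-weight-removal}. Convolve the rows with the coefficients of $(1+x)^{-\wt v-n}$ and $(1+x)^{-\wt u-n}$, and the columns in $r$ with those of $(1+x)^{\wt u+n}$. Every entry then becomes the weight-free number
$\binom{2n-q}{s}\binom{k-n+s}{m+p+q+s-2n+1}$.
\begin{itemize}
\item For $k=0$ and $p+q\ge n$, these entries vanish when $s\ge n$. Otherwise they are, up to signs, polynomials of total degree at most $n-1$ in $(p,q)$. This bounds the full-array span by $\binom{n+1}{2}+\binom{n+1}{2}=n(n+1)$.
\item On the window $\mathcal L_n\sqcup\mathcal H_n$ the seed matrices become block triangular. (B) vanishes on the high rows $\mathcal H_n$, (C) vanishes on the low rows $\mathcal L_n$, and the diagonal blocks are invertible polynomial-evaluation matrices on unisolvent grids.
\end{itemize}
This treats $\mathcal A\cup\mathcal B$, $\mathcal C\cup\mathcal B$ and $\mathcal A\cup\mathcal C$ uniformly, with no change-of-basis matrices. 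Part (1) then follows by multiplying the full-array identities by $v_au_bz$, summing, and using \eqref{eq:drop-comm-strip}.
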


The coefficient statement is Proposition~\ref{prop:AB-prodreg-full-rank};
its all-mode form gives the operator spanning in
Proposition~\ref{prop:empirical-rank}. The proof first takes finite
triangular combinations of rows and columns, after which the
coefficients no longer depend on $\wt u$ or $\wt v$
(Lemma~\ref{lem:formal-weight-removal}). The resulting
coefficients are polynomial evaluations on two triangular grids,
which give both the dimension bound and invertible seed matrices.
Formal independence need not persist in a given $V$
(Remark~\ref{rem:affine-level-one-rank-drop}, $L_1(\mathfrak{sl}_2)$).

\subsection*{Equal inputs, diagonal elimination, and examples}

For $u=v$, the interchange map $J$ is transposition of the two scalar
coefficient indices, so $J^2=1$. Integration by parts identifies its
action on coefficient vectors with an explicit triangular formula. The corresponding
actual-mode interchange holds modulo $U$ and reduces every even
descendant order to smaller orders. The odd-indexed members of
$\mathrm{(A)}$ and $\mathrm{(B)}$ therefore suffice, giving
\begin{equation}\label{eq:intro-equal-input-results}
\dim\mathcal Q_n((u,u))
\le2\left\lfloor\frac{(n+1)^2}{4}\right\rfloor.
\end{equation}
Propositions~\ref{prop:equal-input-odd-spanning}
and~\ref{prop:equal-input-formal-basis} give the operator bound and
the matching formal coefficient dimension, respectively.

For a finite homogeneous strong generating set $S$ with $\ell$
elements, let $\mathcal Q_n(2,S)$ be the span of the maps
$z\mapsto wR^n_{m,k}z+O_n^\circ(2,S,V)^+$ from $V$ to
$V/O_n^\circ(2,S,V)^+$, with $w\in\mathcal P_{\le2}(S)$ and $m\ge k\ge0$.
These bounds combine to give
\begin{equation}\label{eq:intro-global-quadratic-bound}
\dim\mathcal Q_n(2,S)\le
\binom{\ell}{2}n(n+1)
+2\ell\left\lfloor\frac{(n+1)^2}{4}\right\rfloor
\end{equation}
(Corollary~\ref{cor:global-quadratic-bound}). One cross-pair contribution
is needed for each unordered pair of distinct generators.

The coefficient bases also give invertible matrices for two
complementary diagonal eliminations. Lemmas
\ref{lem:generaln-stripA-deep-elim-explicit}
and~\ref{lem:generaln-stripC-deep-elim-explicit} solve for selected deep
quadratic modes with explicit coefficients. At levels one and two,
Heisenberg and universal Virasoro examples attain the equal-input
bounds $2$ and $4$. The affine examples show how additional VOA
relations can reduce the actual rank.

\subsection*{Organization}

Section~\ref{sec:tools-from-part1} recalls the notation and results
of Part~I and proves the numerator identities used below.
Section~\ref{sec:product-commutator} splits the residue of a word into
its product and correction pieces, normalizes the diagonal kernel,
rewrites each correction piece through a contracted word, and proves
the termwise correction theorem; the vanishing of the first correction
is its first step. Section~\ref{sec:quadratic-reduction} begins with the
specialization to a pair (Remark~\ref{rem:bridge-true-regular-quadratic}). It then establishes the
coefficient bases, operator spanning, equal-input and global bounds,
and diagonal eliminations. Section~\ref{sec:examples} explains how to apply
these results to a given strong generating set and carries this out for the
Heisenberg, universal Virasoro, and affine $\mathfrak{sl}_2$ vertex operator
algebras. Section~\ref{sec:conclusion} contains brief
concluding remarks.

\subsection*{Terminology and notation}
The members of the families \textup{(A)}, \textup{(B)}, \textup{(C)} above, and their formal
coefficient vectors \(\mathcal A\), \(\mathcal B\), \(\mathcal C\), are called \emph{seeds};
``strip~A'' and ``strip~C'' refer to the families \textup{(A)} and \textup{(C)} and to the
diagonal eliminations built from them.  The recurring symbols are listed below.
\begin{center}
\renewcommand{\arraystretch}{1.15}
\raggedright
\begin{tabular}{>{\raggedright\arraybackslash}p{.31\textwidth}>{\raggedright\arraybackslash}p{.35\textwidth}>{\raggedright\arraybackslash}p{.25\textwidth}}
Symbol & Meaning & Defined in\\ \hline
$u\,R^n_{m,k}\,v$ & residue operator & \eqref{eq:Rmk-def}\\
$O_n^\circ(u,V)$ & relations from one vector & Def.~\ref{def:Onu-subspace}\\
$U$ & subspace for a pair & \eqref{eq:intro-U-def}, \eqref{eq:U-def}\\
$\mathcal Q_n((u,v))$ & ordered-pair operator space & \eqref{eq:intro-Q-def}\\
$\mathcal Q_n(2,S)$ & global quadratic operator space & \eqref{eq:global-plus-induced-space}\\
$p_{j,r,u}(x)$, $P_{j,r,u}(x)$ & $p$-series and their tails & \eqref{eq:pjmu-def}, \eqref{eq:pm-Pm-decomp}\\
$Y_L(u_{-r}\mathbf1,x)$ in products & derivative field & \eqref{eq:descendant-derivative-convention}\\
$F^+_{i,r,u}(x,x)$ & diagonal kernel & \eqref{eq:Fium-plus-diag}\\
$\Phi_{j,r,u}(x)$ & numerator of $p_{j,r,u}(x)$ & \eqref{eq:Phi-jru}\\
$G^{(n)}_{i,r,u}(x)$ & normalized diagonal kernel & \eqref{eq:G-definition}\\
$O_n^\circ(N,S,V)$, $O_n^\circ(N,S,V)^+$ & finite-length subspaces & Def.~\ref{def:OnN-SV}\\
$U_w$ & subspace for a word's one-sided expansion & \eqref{eq:word-split-modulus}\\
$(\cdot)_{\mathrm p}$, $(\cdot)_{\mathrm m_t,i}$ & product piece and correction pieces & Def.~\ref{def:word-pieces}\\
$y$ & contracted word & \eqref{eq:contracted-word}\\
$C^{(n)}_{m,k}(r;a,b)$ & product coefficients & \eqref{eq:Gamma-discrete2}\\
\textup{(A)}, \textup{(B)}, \textup{(C)} & operator families & \eqref{eq:A-strip}, \eqref{eq:B-strip}, \eqref{eq:C-strip}\\
$\Gamma^{(n)}_{m,k,r}$, $\mathcal S_{\mathrm{prod,reg}}(n;u,v)$ & formal coefficient vectors and span & Def.~\ref{def:universal-prod-reg-space}\\
$\mathcal A$, $\mathcal B$, $\mathcal C$ & formal seed families & \eqref{eq:formal-ABC-seeds}\\
$J$, $\mathcal S_{\mathrm{eq}}(n;u)$ & interchange map, equal-input space & \eqref{eq:equal-input-J-def}, Def.~\ref{def:equal-input-coefficient-space}\\
$U_A$, $U_C$ & subspaces for the eliminations & \eqref{eq:UA-diagonal-elimination}, \eqref{eq:UC-diagonal-elimination}\\
$K_A^{(n)}$, $K_C^{(n)}$, $E^{(n)}_{p,q}$, $F^{(n)}_{\eta,\alpha}$ & elimination matrices and coefficients & \eqref{eq:KA-direct-C-entry}--\eqref{eq:Fka-full-inverse-formula}\\
\end{tabular}
\end{center}

\section{Notation and results from Part~I}\label{sec:tools-from-part1}

This section fixes notation and collects what is used from Part~I \cite{Part1}.
We recall the definitions and formulas needed repeatedly and record one
short lemma.
Table~\ref{tab:part1-results} is a reference guide to their locations in Part~I.
The new material is the set of $\Phi$ identities in Section~\ref{subsec:Phi-basic}.

\subsection{Conventions}
Throughout, $\N=\{0,1,2,\dots\}$.  For $\alpha\in\C$ and $i\in\N$ we use the generalized
binomial coefficients
\begin{equation}\label{eq:how-to-expand}
\binom{\alpha}{i}
=
\frac{\alpha(\alpha-1)\cdots(\alpha-i+1)}{i!},
\end{equation}
with $\binom{\alpha}{0}=1$ and $\binom{\alpha}{k}=0$ for $k<0$, and we expand
\begin{equation}\label{eq:binomial-expansion}
(x+y)^\alpha=\sum_{i\ge 0}\binom{\alpha}{i}x^{\alpha-i}y^i
\end{equation}
in nonnegative powers of $y$.  We write $[x^M]F(x)=\Res_x\,x^{-M-1}F(x)$ for
$F(x)\in\C((x))$ and $M\in\Z$.  These are the conventions of \cite[(2.1)--(2.2)]{Part1}.

The level $n$ Zhu algebra is $A_n(V)=V/O_n(V)$ \cite{Z2,DLM}, where $O_n(V)$ is
spanned by the circle products
$u\circ_nv=\Res_x(1+x)^{\wt u+n}x^{-2n-2}Y(u,x)v$ ($u$ homogeneous) and the
vectors $(L(-1)+L(0))v$ \cite[(2.1)]{DLM}. In the notation of
\cite[(2.2)--(2.3)]{AB}, $O_n(V)=O^L(V)+O_n^\circ(V)$, where
$O^L(V)=\{(L(-1)+L(0))v:\ v\in V\}$ and $O_n^\circ(V)$ is the span of the
circle products.

\begin{defn}[Congruence modulo a subspace]\label{def:W-congruence}
	Let $V$ be a vector space and let $W\subset V$ be a subspace.

	(1) For $a,b\in V$ we write $a\equiv_W b$ if $a-b\in W$.

	(2) For $A,B\in V((x_1))\cdots((x_r))$ we write $A\equiv_W B$ if every coefficient of
	$A-B$ lies in $W$, i.e. $A-B\in W((x_1))\cdots((x_r))$.

	(3) Let $F,G$ be linear maps from $V$ to $V((x_1))\cdots((x_r))$.
	We write $F\equiv_W G$ if
	$(F-G)v\in W((x_1))\cdots((x_r))$ for all $v\in V$; equivalently
	$F-G\in\Hom\!\bigl(V,W((x_1))\cdots((x_r))\bigr)$.
	The Laurent lower bounds may depend on the input vector.
	In particular, for ordinary operators $A,B\in\End(V)$,
	\[
	A\equiv_W B
	\quad\Longleftrightarrow\quad
	(A-B)(V)\subseteq W.
	\]
	Equivalently, $A$ and $B$ induce the same linear map from $V$ to $V/W$.
	These congruences are preserved under coefficient extraction and formal residues
	in the variables $x_1,\dots,x_r$, and under multiplication by scalars in $\C((x_1))\cdots((x_r))$.
\end{defn}

\begin{defn}[Residue operators]\label{def:Rmk}\label{def:Onu-subspace}\label{O-u subspace defn}
	Let $u\in V$ be homogeneous and $n\in\N$.  For $m\ge k\ge0$ and $v\in V$ set
	\begin{equation}\label{eq:Rmk-def}
	u\,R^{n}_{m,k}\,v
	\;:=\;
	\Res_x \frac{(1+x)^{\wt u+n+k}}{x^{m+2n+2}}\,Y(u,x)v,
	\end{equation}
	and let $O_n^\circ(u,V)$ be the span of all $u\,R^n_{m,k}\,v$ with $v\in V$ and $m\ge k\ge0$.
\end{defn}
By \eqref{eq:Rmk-def}, $u\,R^n_{0,0}\,v=u\circ_nv$.  By \cite[Lemma~2.3]{Part1},
$O_n^\circ(u,V)\subseteq O_n^\circ(V)$ and
\begin{equation}\label{eq:Rmk-binomial-reduction}
u\,R^{n}_{m,k}\,v
=
\sum_{t=0}^{k}\binom{k}{t}\,u\,R^{n}_{m-t,0}\,v
\qquad(m\ge k\ge0).
\end{equation}

\subsection{The \texorpdfstring{$p$}{p}-series, \texorpdfstring{$Y_L$}{YL}, and the one-sided expansion}
For homogeneous $u$, $j\ge-2n-1$, and $r\ge1$, Part~I defines
\cite[(2.25), (2.30)]{Part1} the $p$-series
\begin{equation}\label{eq:pjmu-def}
p_{j,r,u}(x):=\frac{1}{(r-1)!}\frac{d^{\,r-1}}{dx^{\,r-1}}
\Biggl(\frac{1}{(1+x)^{\wt u + n}}\sum_{i=0}^{2n+j+1}\binom{\wt u + n}{i}\,x^{\,i-j-1}\Biggr),
\end{equation}
and \cite[Lemma~2.13]{Part1} gives a unique $P_{j,r,u}(x)\in\C[[x]]$ with
\begin{equation}\label{eq:pm-Pm-decomp}
p_{j,r,u}(x)=\binom{-j-1}{r-1}x^{-j-r}+x^{2n+2-r}P_{j,r,u}(x).
\end{equation}
The left truncated field \cite[Definition~2.14]{Part1} is
\begin{equation}\label{eq:YL-pform-def}
Y_L(u,x):=\sum_{j\ge -2n-1} u_j\,p_{j,1,u}(x)\ \in\ \Hom\!\bigl(V,V((x))\bigr),
\end{equation}
extended linearly to all of $V$. In a product of fields, for homogeneous
$u$ and $r\ge1$, the symbol $Y_L(u_{-r}\mathbf1,x)$ denotes the derivative
field \cite[Convention~3.6]{Part1}
\begin{equation}\label{eq:descendant-derivative-convention}
Y_L(u_{-r}\mathbf1,x):=\frac{1}{(r-1)!}\,\partial_x^{\,r-1}Y_L(u,x)
=\sum_{j\ge-2n-1}u_j\,p_{j,r,u}(x),
\end{equation}
the second equality by \eqref{eq:pjmu-def}; for $r=1$ it is $Y_L(u,x)$.
This is a choice of notation: \eqref{eq:descendant-derivative-convention} is
not the left truncated field \eqref{eq:YL-pform-def} of the vector
$u_{-r}\mathbf1$, but as the leftmost factor the two are congruent modulo
$O_n^\circ(u,V)$ by \cite[Lemma~3.5]{Part1}.

For homogeneous $u$, $i\ge0$, and $r\ge1$, the diagonal regular commutator kernel
$F^+_{i,r,u}(x,x)$ of \cite[(3.12)]{Part1} has the explicit form
\cite[Lemma~3.9]{Part1}
\begin{equation}\label{eq:Fium-plus-diag}
F_{i,r,u}^{+}(x,x)
=
\sum_{k=-2n-1}^{-1}\binom{k}{i}\,p_{k,r,u}(x)\,x^{\,k-i}
\;+\;
x^{2n+2-r-i}\sum_{k\ge0}\binom{k}{i}\,P_{k,r,u}(x)\,x^{k},
\end{equation}
with $p_{j,r,u}(x)$ and $P_{j,r,u}(x)$ as in \eqref{eq:pjmu-def} and
\eqref{eq:pm-Pm-decomp}.
For homogeneous $u,v\in V$ and $r\ge1$, set
\[
U_L:=O_n^\circ(u,V)+O_n^\circ(v,V)+\sum_{i\ge0}O_n^\circ(u_iv,V).
\]
The one-step identity \cite[Lemma~3.10]{Part1} states that, in
$\Hom(V,V((x)))$,
\begin{equation}\label{eq:one-step-YL-U}
Y(u_{-r}v,x)
\;\equiv_{\,U_L\,}\;
\sum_{i\ge0} F^+_{i,r,u}(x,x)\,Y_L(u_i v,x)\;+\;Y_L(v,x)\,Y_L(u_{-r}\mathbf1,x),
\end{equation}
where $Y_L(u_{-r}\mathbf1,x)$ is the derivative field of
\eqref{eq:descendant-derivative-convention}.

Iterating the one-step identity along a word gives the one-sided expansion of
Part~I.  Let $N\ge2$, let $s^{(1)},\ldots,s^{(N)}\in V$ be homogeneous, and let
$r_1,\ldots,r_N\ge1$.  Put
\begin{equation}\label{eq:correction-word}
w^{(t)}=s^{(t)}_{-r_t}\cdots s^{(N)}_{-r_N}\mathbf1,
\qquad w=w^{(1)},\qquad
\wt w=\sum_{t=1}^{N}(\wt s^{(t)}+r_t-1),
\end{equation}
and let
\begin{equation}\label{eq:word-split-modulus}
U_w:=\sum_{q=1}^{N}O_n^\circ(s^{(q)},V)
+\sum_{t=0}^{N-2}\sum_{i\ge0}
O_n^\circ(s^{(t+1)}_iw^{(t+2)},V).
\end{equation}
Then \cite[Proposition~3.11]{Part1} states that, in $\Hom(V,V((x)))$,
\begin{equation}\label{eq:one-sided-iterate}
\begin{aligned}
Y(w,x)\equiv_{U_w}{}&
\sum_{t=0}^{N-2}\sum_{i\ge0}
F^+_{i,r_{t+1},s^{(t+1)}}(x,x)\,Y_L(s^{(t+1)}_iw^{(t+2)},x)\,
Y_L(s^{(t)}_{-r_{t}}\mathbf1,x)\cdots
Y_L(s^{(1)}_{-r_1}\mathbf1,x)\\
&+Y_L(s^{(N)}_{-r_N}\mathbf1,x)\cdots
Y_L(s^{(1)}_{-r_1}\mathbf1,x).
\end{aligned}
\end{equation}
Here $t$ is the number of trailing derivative fields, and the
singular field of the summand with index $t$ sits at position $t+1$ of
$w$; this is \cite[(3.23)]{Part1} with its summation index $s=t+1$.
For $t=0$ the trailing product of fields is empty, and each
$Y_L(s^{(q)}_{-r_q}\mathbf1,x)$ is the derivative field of
\eqref{eq:descendant-derivative-convention}.  Each replacement in the proof of
\eqref{eq:one-sided-iterate} is made at a leftmost field.  For $N=2$ and
$r_2=1$, with $s^{(1)}=u$ and $s^{(2)}=v$, the expansion
\eqref{eq:one-sided-iterate} is the one-step identity
\eqref{eq:one-step-YL-U}, and $U_w=U_L$.

\subsection{Replacement residues and finite-length subspaces}
Let $U\subseteq O_n^\circ(V)$ be a subspace, let $v\in V$ be homogeneous, and let
$\widehat Y(v,x)\in\Hom(V,V((x)))$ satisfy $Y(v,x)\equiv_U\widehat Y(v,x)$.  The replacement
residues of \cite[Definition~4.2]{Part1} are
\begin{equation}\label{eq:Rmk-replacement-def}
v\,R^{n,\widehat Y}_{m,k}\,z
\;:=\;
\Res_x \frac{(1+x)^{\wt v+n+k}}{x^{m+2n+2}}\;\widehat Y(v,x)z
\qquad(m\ge k\ge0,\ z\in V),
\end{equation}
and \cite[Proposition~4.3]{Part1} gives
$v\,R^{n,\widehat Y}_{m,k}\,z\equiv_U v\,R^n_{m,k}\,z$, so these residues lie in $O_n^\circ(V)$.

A subset $S\subseteq V$ \emph{strongly generates} $V$ if $V$ is spanned by the vectors
$s^{(1)}_{-m_1}\cdots s^{(r)}_{-m_r}\mathbf1$ with $r\ge0$, $s^{(j)}\in S$, and $m_j\ge1$
\cite[Definition~4.1]{Part1}.

\begin{defn}[Finite-length circle-relation subspaces]\label{def:OnN-SV}
Fix $n\in\N$ and a finite homogeneous subset $S\subset V$.
For $N\ge1$, let $\mathcal P_{\le N}(S)$ consist of the vacuum words
\begin{equation}\label{eq:finite-length-vacuum-words}
w=s^{(1)}_{-m_1}\cdots s^{(r)}_{-m_r}\mathbf1,
\qquad 1\le r\le N,\quad s^{(j)}\in S,\quad
m_1,\ldots,m_{r-1}\in\Z,\quad m_r\ge1.
\end{equation}
Define
\begin{equation}\label{eq:OnNSV-def}
O_n^\circ(N,S,V)
:=\Span_\C\{wR^n_{m,k}z:\ w\in\mathcal P_{\le N}(S),\ z\in V,
\ m\ge k\ge0\}\subseteq O_n^\circ(V).
\end{equation}
The containment follows from \cite[Lemma~2.3]{Part1}.
The plus subspace is
\begin{equation}\label{eq:OnNSV-plus-def}
O_n^\circ(N,S,V)^+
:=\Span_\C\{wR^n_{m,k}z:\ z\in V,\ m\ge k\ge0,
\ w\text{ as specified below}\},
\end{equation}
where $w$ either has length at most $N-1$, or has length $N$ in
\eqref{eq:finite-length-vacuum-words} with $m_j\le0$ for at least one
$j<N$. Thus
\begin{equation}\label{eq:circle-length-inclusions}
O_n^\circ(N-1,S,V)\subseteq O_n^\circ(N,S,V)^+
\subseteq O_n^\circ(N,S,V)\qquad(N\ge2).
\end{equation}
If $S$ strongly generates $V$, then
\begin{equation}\label{eq:circle-length-exhaustion}
\bigcup_{N\ge1}O_n^\circ(N,S,V)=O_n^\circ(V),
\end{equation}
because every vector of $V$ is a finite linear combination of regular
vacuum words and the vacuum slice is zero.
\end{defn}

\begin{lem}[Vacuum descendants do not enlarge $O_n^\circ(\,\cdot\,,V)$]
	\label{lem:descendant-circle-subspace-inclusion}
	Fix $n\in\N$ and let $u\in V$ be homogeneous.  Fix $r\ge 1$.
	Then
	\begin{equation}\label{eq:descendant-On-inclusion}
	O_n^\circ(u_{-r}\mathbf 1,V)\ \subseteq\ O_n^\circ(u,V),
	\end{equation}
	where $O_n^\circ(\,\cdot\,,V)$ is as in Definition~\ref{O-u subspace defn}.
\end{lem}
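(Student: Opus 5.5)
We will show that each generator $(u_{-r}\mathbf 1)R^n_{m,k}z$, with $m\ge k\ge0$ and $z\in V$, lies in $O_n^\circ(u,V)$. The natural tool is the translation identity $u_{-r}\mathbf1=\frac{1}{(r-1)!}L(-1)^{r-1}u$, which gives
\[
Y(u_{-r}\mathbf1,x)=\frac{1}{(r-1)!}\,\partial_x^{\,r-1}Y(u,x).
\]
The vector $u_{-r}\mathbf1$ is homogeneous of weight $\wt u+r-1$. The generator therefore equals
\[
\frac{1}{(r-1)!}\Res_x \frac{(1+x)^{\wt u+r-1+n+k}}{x^{m+2n+2}}\,\partial_x^{\,r-1}Y(u,x)z .
\]
We integrate by parts $r-1$ times, using $\Res_x\partial_x(\cdot)=0$, and move the derivatives onto the scalar factor. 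This reduces the claim to the following: the function
\[
(-1)^{r-1}\frac{1}{(r-1)!}\,\partial_x^{\,r-1}\!\left[(1+x)^{\wt u+n+k+r-1}x^{-m-2n-2}\right]
\]
is a finite linear combination of the kernels $(1+x)^{\wt u+n+k'}x^{-m'-2n-2}$ with $m'\ge k'\ge0$.

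The computation itself is a Leibniz expansion. By induction on $r$, it is enough to treat one derivative at a time. We have
\[
\partial_x\!\left[(1+x)^{\alpha+n+k+s}x^{-M}\right]
=(\alpha+n+k+s)(1+x)^{\alpha+n+k+s-1}x^{-M}-M(1+x)^{\alpha+n+k+s}x^{-M-1},
\]
where $\alpha=\wt u$. A more convenient way to organize the same computation is to write
\[
(1+x)^{\alpha+n+k+s}x^{-M}=(1+x)^{\alpha+n+k}\,x^{-M}(1+x)^s .
\]
We then expand $(1+x)^s$ binomially, which is a finite sum since $s\ge0$. After differentiating, every term has the shape $c\,(1+x)^{\alpha+n+k''}x^{-M''}$, where the exponent of $(1+x)$ above $\alpha+n$ is at least $k$ minus the number of derivatives that have hit that factor.

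The main obstacle is the index constraint $m'\ge k'\ge0$. A derivative can lower the $(1+x)$ exponent below $\wt u+n$, or lower $k'$ while leaving $m'$ unchanged. I would handle this as follows:
\begin{itemize}
\item Start from the total exponent $\wt u+n+k+r-1$, and note that exactly $r-1$ derivatives are applied. Each derivative either lowers the $(1+x)$ exponent by one or raises the $x^{-1}$ power by one.
\item Consequently every term has $(1+x)$ exponent $\wt u+n+k+a$ and $x$-exponent $-(m+2n+2+b)$, where $a,b\ge0$ and $a+b=r-1$ after the $(1+x)^{r-1}$ factor is included.
\item If $a\ge0$, a finite binomial expansion of $(1+x)^a$ gives terms $(1+x)^{\wt u+n+k}x^{j}x^{-m-2n-2-b}$ with $0\le j\le a$. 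Each such term is the kernel of $R^n_{m+b-j,k}$.
\item The remaining check is $m+b-j\ge k$. This can fail when $j>b$. In that case I would instead absorb $x^j$ differently: write $x=(1+x)-1$, which converts excess positive powers of $x$ into changes of the $k$ index. Alternatively, I would use \eqref{eq:Rmk-binomial-reduction} in reverse, together with the observation that residues of $(1+x)^{\wt u+n+k}x^{-M}Y(u,x)z$ with $M\le k+2n+1$ are expressible through $R^n_{M',k'}$ with $k'\le M'$ by lowering $k$ through $(1+x)^k=\sum\binom{k}{t}x^t$.
\end{itemize}
Once every term is rewritten in the form $u\,R^n_{m',k'}z$ with $m'\ge k'\ge0$, the generator lies in $O_n^\circ(u,V)$ by Definition~\ref{def:Onu-subspace}, which proves \eqref{eq:descendant-On-inclusion}.
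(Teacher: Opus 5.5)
Your overall strategy is sound and is essentially the paper's: write $Y(u_{-r}\mathbf1,x)=\frac{1}{(r-1)!}\partial_x^{r-1}Y(u,x)$, integrate by parts, and expand the differentiated kernel. The paper cites Part~I, Lemma~2.8 ($O_n^\circ(L(-1)u,V)\subseteq O_n^\circ(u,V)$), which is the one-derivative case of this computation, and iterates it $r-1$ times.

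However, the exponent bookkeeping in your second bullet is wrong, and it creates an obstacle that does not exist. Suppose $j$ of the $r-1$ derivatives hit $(1+x)^{\wt u+n+k+r-1}$ and the remaining $c=r-1-j$ hit $x^{-m-2n-2}$. The resulting term is $(1+x)^{\wt u+n+k+c}x^{-(m+c)-2n-2}$. So the excess $(1+x)$-exponent and the extra power of $x^{-1}$ are equal (your $a=b$), not complementary ($a+b=r-1$). Leibniz's rule gives the exact identity
\[
(u_{-r}\mathbf1)R^n_{m,k}z=(-1)^{r-1}\sum_{c=0}^{r-1}\binom{\wt u+n+k+r-1}{r-1-c}\binom{-m-2n-2}{c}\,u\,R^n_{m+c,k+c}\,z,
\]
and $m+c\ge k+c\ge0$ holds automatically. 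Your own one-derivative formula already shows this: with starting exponent $\wt u+n+k+1$, its two terms are exactly the kernels of $R^n_{m,k}$ and $R^n_{m+1,k+1}$. No binomial expansion of $(1+x)^a$ is needed, and the case $j>b$ never arises.

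The fallback you propose for that phantom case is not valid. A residue $\Res_x(1+x)^{\wt u+n+k}x^{-m'-2n-2}Y(u,x)z$ with $m'<k$ is in general not in $O_n^\circ(u,V)$. Take $u=\mathbf1$, $n=0$, $k=1$, $m'=0$: the residue equals $[x^1](1+x)\,z=z$. But $O_0^\circ(\mathbf1,V)=0$, since $[x^{m+1}](1+x)^k=0$ whenever $m\ge k$. For general homogeneous $u$, the same kernel gives $u\circ_0z+u*z$, where $u*z$ is Zhu's product. Neither rewriting $x=(1+x)-1$ nor running \eqref{eq:Rmk-binomial-reduction} backwards can fix this: the constraint $m\ge k$ is exactly what keeps such product-type terms out of $O_n^\circ(u,V)$.

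So your argument, as written, rests on a false claim at precisely the step you flag as the main obstacle. To complete the proof, replace your second through fourth bullets with the displayed identity above. Alternatively, do a clean induction on $r$ applied to the homogeneous vector $u_{-(r-1)}\mathbf1$, using $u_{-r}\mathbf1=\frac{1}{r-1}L(-1)u_{-(r-1)}\mathbf1$, as the paper does.
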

\begin{proof}
	Since \(u_{-r}\mathbf1=\frac{1}{(r-1)!}L(-1)^{r-1}u\), the claim follows by
	applying \cite[Lemma~2.8]{Part1} \(r-1\) times.
\end{proof}

\subsection{Results used from Part~I}
Table~\ref{tab:part1-results} lists the results of Part~I used in this paper.

\begin{table}[H]
\centering
\renewcommand{\arraystretch}{1.2}
\begin{tabular}{p{0.72\textwidth}l}
Statement & Part~I\\ \hline
$u\,R^n_{m,k}\,v\in O_n^\circ(V)$, and the reduction \eqref{eq:Rmk-binomial-reduction} to $k=0$ & Lemma~2.3\\
$O_n^\circ(L(-1)u,V)\subseteq O_n^\circ(u,V)$ & Lemma~2.8\\
Higher derivatives of the $p$-series and the tails $P_{j,r,u}$, \eqref{eq:pm-Pm-decomp} & Lemma~2.13\\
The left truncated field $Y_L$, \eqref{eq:YL-pform-def} & Definition~2.14\\
$Y_L^-(u,x)=Y^-(u,x)$ and $Y(u,x)\equiv_{O_n^\circ(u,V)}Y_L(u,x)$ & Proposition~3.1\\
Leftmost replacement modulo $O_n^\circ(u_1,V)$ & Lemma~3.3\\
The truncated field of $u_{-r}\mathbf1$ versus the derivative field & Lemma~3.5\\
The derivative-field convention & Convention~3.6\\
Explicit diagonal kernel \eqref{eq:Fium-plus-diag} & Lemma~3.9\\
One-step identity \eqref{eq:one-step-YL-U} & Lemma~3.10\\
One-sided iterate expansion \eqref{eq:one-sided-iterate} and the subspace $U_w$ & Proposition~3.11\\
Strong generating sets & Definition~4.1\\
Replacement residues \eqref{eq:Rmk-replacement-def} and their membership in $O_n^\circ(V)$ & Def.~4.2, Prop.~4.3\\
Finite-length subspaces $O_n^\circ(N,S,V)$ and $O_n^\circ(N,S,V)^+$ & Definition~4.5\\
Termwise correction assertion (proved here) & Conjecture~4.6\\
Expected first-correction inclusion (proved here, Corollary~\ref{cor:Y-comm-residue-in-sumOnuiv}\textup{(iii)}) & (4.10)\\
Expansion of an $N$-iterate vacuum word against $R^n_{m,k}$ & Lemma~4.12\\
First reduction of $O_n^\circ(N,S,V)$: $k=0$ and last mode $-1$ & Corollary~4.14\\
$O_n^\circ(2,\{u,v\},V)^+=O_n^\circ(u,V)+O_n^\circ(v,V)+\sum_{(s,t)}\sum_{i\ge0}O_n^\circ(s_it,V)$, $(s,t)\in\{(u,u),(u,v),(v,v)\}$ & Lemma~4.16\\
The Virasoro quadratic window & Example~5.2\\
\end{tabular}
\caption{Reference guide to the definitions and results of Part~I used in this paper.}
\label{tab:part1-results}
\end{table}

The first goal is to prove the complete termwise correction assertion
of Part~I. The second is to give uniform quadratic spanning families
and combine the pair calculations modulo $O_n^\circ(2,S,V)^+$.
The coefficient bases also supply the two diagonal-elimination formulas.

\subsection{Basic identities for \texorpdfstring{$\Phi$}{Phi}}\label{subsec:Phi-basic}

\begin{defn}[Numerator for $p_{j,r,u}(x)$]\label{def:Phi-jru}
	Fix $n\in\N$ and let $u\in V$ be homogeneous. For integers $r\ge 1$ and
	$j\ge -2n-1$, define
	\begin{equation}\label{eq:Phi-jru}
	\Phi_{j,r,u}(x)
	:=
	\sum_{s=0}^{r-1}
	\binom{-\wt u-n}{\,r-1-s\,}(1+x)^s
	\sum_{i=0}^{2n+j+1}
	\binom{\wt u+n}{\,i\,}\binom{i-j-1}{\,s\,}\,x^{\,i-j-1-s},
	\end{equation}
	so that the Leibniz rule applied to \eqref{eq:pjmu-def} gives
	\begin{equation}\label{eq:p-from-Phi-r}
	p_{j,r,u}(x)
	=
	\frac{1}{(1+x)^{\wt u+n+r-1}}\;\Phi_{j,r,u}(x).
	\end{equation}
\end{defn}
\begin{lem}[Generating-function packaging for $\Phi_{b,r,u}(x)$]
	\label{lem:Phi-y-packaging}
	Fix $n\in\N$ and let $u\in V$ be homogeneous.  Let $r\ge 1$ and $b\ge -2n-1$.
	Then $\Phi_{b,r,u}(x)$ from Definition~\ref{def:Phi-jru} admits the coefficient-extraction form
	\begin{equation}\label{eq:Phi-y-packaging}
	\Phi_{b,r,u}(x)
	=
	\bigl[y^{r-1}\bigr]\,
	(1+y)^{-\wt u-n}\;
	\sum_{p=0}^{2n+b+1}
	\binom{\wt u+n}{p}\,
	\Bigl(x+y(1+x)\Bigr)^{\,p-b-1}.
	\end{equation}
\end{lem}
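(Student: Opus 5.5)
The plan is to prove \eqref{eq:Phi-y-packaging} by extracting the coefficient $[y^{r-1}]$ of the right-hand side directly and comparing with Definition~\ref{def:Phi-jru}. Write $\alpha=\wt u+n$. The right-hand side is a finite sum over $p$, so it suffices to handle each summand. For a fixed $p$, we factor
\[
\bigl(x+y(1+x)\bigr)^{p-b-1}
\]
as a power series in $y$ with coefficients in $\C((x))$, using the expansion convention \eqref{eq:binomial-expansion} in nonnegative powers of $y$:
\[
\bigl(x+y(1+x)\bigr)^{p-b-1}=\sum_{s\ge0}\binom{p-b-1}{s}x^{p-b-1-s}(1+x)^s y^s .
\]
This is valid even when $p-b-1$ is negative, since only nonnegative powers of $y$ appear. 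Similarly,
\[
(1+y)^{-\alpha}=\sum_{t\ge0}\binom{-\alpha}{t}y^t .
\]

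Multiplying these two series and taking the coefficient of $y^{r-1}$ forces $t=r-1-s$ with $0\le s\le r-1$. For fixed $p$ this gives
\[
\sum_{s=0}^{r-1}\binom{-\alpha}{r-1-s}(1+x)^s\binom{p-b-1}{s}x^{p-b-1-s}.
\]
We then multiply by $\binom{\alpha}{p}$ and sum over $0\le p\le 2n+b+1$. Interchanging the two finite sums (over $s$ and over $p$) and renaming $p$ to $i$ reproduces \eqref{eq:Phi-jru} term by term, with $j=b$.

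The only point needing care, and what I expect to be the main (mild) obstacle, is justifying the manipulations in the ring $\C((x))[[y]]$. The expansion of $(x+y(1+x))^{p-b-1}$ must be the one prescribed by \eqref{eq:binomial-expansion}, and we must confirm that this is the same expansion implicitly used when the Leibniz rule produced \eqref{eq:p-from-Phi-r}. The factor $x^{p-b-1-s}$ can carry a negative exponent, but $x$ is invertible in $\C((x))$, so no convergence issue arises. Coefficient extraction in $y$ commutes with the finite sums over $p$ and $s$, and both sides are finite sums, which completes the argument.

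As an optional consistency check, one can note that the right-hand side is the Taylor-coefficient form of $\frac{1}{(r-1)!}\partial_x^{r-1}$ applied to $(1+x)^{-\alpha}\sum_p\binom{\alpha}{p}x^{p-b-1}$. Substituting $x\mapsto x+y(1+x)$ turns $(1+x)^{-\alpha}$ into $(1+x)^{-\alpha}(1+y)^{-\alpha}$. This explains the normalization $(1+x)^{\alpha+r-1}$ in \eqref{eq:p-from-Phi-r}, but it is not needed for the direct coefficient comparison above.
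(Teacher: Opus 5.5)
Your proposal is correct and is essentially the paper's proof: expand $(1+y)^{-\wt u-n}$ and $\bigl(x+y(1+x)\bigr)^{p-b-1}$ in nonnegative powers of $y$ via \eqref{eq:binomial-expansion}, extract $[y^{r-1}]$, and match the resulting finite double sum term by term with \eqref{eq:Phi-jru}. The compatibility with \eqref{eq:p-from-Phi-r} that you flag as the main obstacle is not needed here, because the lemma concerns only the defining formula \eqref{eq:Phi-jru}.
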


\begin{proof}
Expand in nonnegative powers of $y$ by \eqref{eq:binomial-expansion}:
\[
\begin{aligned}
&[y^{r-1}](1+y)^{-\wt u-n}\sum_{p=0}^{2n+b+1}
\binom{\wt u+n}p\bigl(x+y(1+x)\bigr)^{p-b-1}\\
&\quad=\sum_{s=0}^{r-1}\binom{-\wt u-n}{r-1-s}(1+x)^s
\sum_{p=0}^{2n+b+1}\binom{\wt u+n}p\binom{p-b-1}s x^{p-b-1-s}\\
&\quad=\Phi_{b,r,u}(x)
\end{aligned}
\]
by Definition~\ref{def:Phi-jru}. Both coefficient convolutions are finite.
\end{proof}

\begin{lem}[Coefficient formula for $\Phi_{j,r,u}(x)$ and the cutoff $2n-d$]
	\label{lem:Phi-coeff-cutoff}
	Fix $n\in\N$ and let $u\in V$ be homogeneous.  Fix $r\ge 1$ and $j\ge -2n-1$.
	Then for every $d\in\Z$ we have
	\begin{equation}\label{eq:Phi-coeff-formula-cutoff}
	[x^d]\Phi_{j,r,u}(x)
	=
	\sum_{t=0}^{\min(r-1,\,2n-d)}
	\binom{\wt u+n}{\,d+j+1+t\,}\,
	\binom{d+t}{t}\,
	\binom{d-(\wt u+n)}{\,r-1-t\,},
	\end{equation}
	the sum being empty when $2n-d<0$.
\end{lem}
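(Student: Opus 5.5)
We will prove \eqref{eq:Phi-coeff-formula-cutoff} from the generating form of Lemma~\ref{lem:Phi-y-packaging} rather than from the double sum \eqref{eq:Phi-jru}. Write $\alpha=\wt u+n$ and set $e=p-j-1$, so that $p$ ranges over $0\le p\le 2n+j+1$. The key observation is the regrouping
\[
x+y(1+x)=x(1+y)+y .
\]
Each factor $\bigl(x+y(1+x)\bigr)^{e}$ in \eqref{eq:Phi-y-packaging} is an element of $\C[x,x^{-1}][[y]]$, expanded by \eqref{eq:binomial-expansion} in nonnegative powers of $y$. Since $x$ and $1+y$ are invertible there, we can rewrite it as
\[
x^{e}(1+y)^{e}\Bigl(1+\frac{y}{x(1+y)}\Bigr)^{e}
=\sum_{t\ge0}\binom{e}{t}\,x^{e-t}(1+y)^{e-t}y^{t}.
\]
The first step is to verify that this is an equality in $\C[x,x^{-1}][[y]]$. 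This is the only point requiring care, because $e$ may be negative. We will argue that both sides are the unique power series in $y$ with constant term $x^{e}$ whose logarithmic $y$-derivative is $(1+x)/(x+y(1+x))$. Alternatively, we can note that the binomial series satisfies $(AB)^e=A^eB^e$ whenever $A,B$ have invertible constant terms in $y$.

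The second step is to extract $[x^d]$. In the $t$-th summand the power of $x$ is $e-t$, so this summand contributes exactly when $t=e-d$, i.e.\ $p=d+j+1+t$. In that case $\binom{e}{t}=\binom{d+t}{t}$. This gives
\[
[x^d]\Phi_{j,r,u}(x)=\sum_{t\ge0}\binom{\alpha}{d+j+1+t}\binom{d+t}{t}\,[y^{r-1}]\,(1+y)^{-\alpha+d}\,y^{t}.
\]
We may interchange $[x^d]$ with $[y^{r-1}]$ and with the finite $p$-sum, because for a fixed power of $y$ only finitely many terms occur.

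The third step evaluates $[y^{r-1}](1+y)^{d-\alpha}y^t=\binom{d-\alpha}{r-1-t}$. This coefficient vanishes unless $t\le r-1$. The constraint $p\le 2n+j+1$ becomes $t\le 2n-d$. The constraint $p\ge0$ imposes nothing further, since $\binom{\alpha}{p}$ is interpreted as $0$ for $p<0$ by convention. Hence the sum runs over $0\le t\le\min(r-1,2n-d)$, and it is empty when $2n-d<0$, which is exactly \eqref{eq:Phi-coeff-formula-cutoff}.

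I expect the main obstacle to be the first step: justifying the regrouped binomial expansion for negative (or arbitrary integer) exponent $e$ inside $\C[x,x^{-1}][[y]]$. Once that identity is secured, the rest is direct coefficient bookkeeping.
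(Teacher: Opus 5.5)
Your proof is correct and follows the paper's argument: rewrite $x+y(1+x)=x(1+y)+y$, take $[x^d]$ so that only $t=p-j-1-d$ survives with $\binom{d+t}{t}(1+y)^d y^t$, then read off $[y^{r-1}](1+y)^{d-(\wt u+n)}y^t=\binom{d-(\wt u+n)}{r-1-t}$ and translate $p\le 2n+j+1$ into $t\le 2n-d$. The step you single out as the main obstacle is routine: $e=p-j-1$ is an integer, so both expansions are simply the $e$-th power of the same unit $x+y(1+x)$ in $\C[x,x^{-1}][[y]]$. If you keep the logarithmic-derivative argument instead, the derivative should carry the factor $e$.
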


\begin{proof}
	In Lemma~\ref{lem:Phi-y-packaging} the variable $y$ only records the
	derivative order, so $[x^d]$ may be taken first. Write
	$x+y(1+x)=x(1+y)+y$ and expand in powers of $y$ by
	\eqref{eq:binomial-expansion}. Only the term $y^t$ with $t=p-j-1-d$
	contributes to $[x^d]$:
	\[
	[x^d]\bigl(x(1+y)+y\bigr)^{p-j-1}=\binom{d+t}{t}(1+y)^d\,y^t,
	\qquad t=p-j-1-d,
	\]
	and this is zero if $t<0$. The range $0\le p\le2n+j+1$ becomes
	$0\le t\le2n-d$, so \eqref{eq:Phi-y-packaging} with $b=j$ gives
	\[
	[x^d]\Phi_{j,r,u}(x)
	=\sum_{t=0}^{2n-d}
	\binom{\wt u+n}{d+j+1+t}\binom{d+t}{t}\,
	[y^{r-1-t}](1+y)^{d-(\wt u+n)}.
	\]
	The last factor is $\binom{d-(\wt u+n)}{r-1-t}$, which vanishes for
	$t>r-1$.
\end{proof}

\begin{cor}[Laurent support and shifted polynomial of $\Phi_{j,r,u}(x)$]
	\label{cor:Phi-Laurent-and-shifted-poly}
	Fix $n\in\N$, let $u\in V$ be homogeneous, and let $r\ge 1$ and $j\ge -2n-1$.
	Then $\Phi_{j,r,u}(x)\in\C[x,x^{-1}]$ is a finite Laurent polynomial supported in degrees
	$-j-r\le d\le 2n$, i.e.
	\begin{equation}\label{eq:Phi-Laurent-expansion}
	\Phi_{j,r,u}(x)=\sum_{d=-j-r}^{2n}	\sum_{t=0}^{\,2n-d\,}
	\binom{\wt u+n}{\,d+j+1+t\,}\,
	\binom{d+t}{\,t\,}\,
	\binom{d-(\wt u+n)}{\,r-1-t\,}x^d.
	\end{equation}
	In particular,
	$x^{j+r}\Phi_{j,r,u}(x)\in\C[x]$ and $\deg\!\bigl(x^{j+r}\Phi_{j,r,u}\bigr)\le 2n+j+r$.
	Moreover, if $j\le -1$ then $\Phi_{j,r,u}(x)\in\C[x]$ and $\deg \Phi_{j,r,u}\le 2n$.
\end{cor}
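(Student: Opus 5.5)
The statement to prove is Corollary~\ref{cor:Phi-Laurent-and-shifted-poly}. I would deduce it directly from Lemma~\ref{lem:Phi-coeff-cutoff}, after first checking from Definition~\ref{def:Phi-jru} that $\Phi_{j,r,u}(x)$ is a finite Laurent polynomial.

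\emph{Finiteness.} In \eqref{eq:Phi-jru} both sums are finite: $0\le s\le r-1$ and $0\le i\le 2n+j+1$. Each summand is $(1+x)^s$, a polynomial, times the monomial $x^{i-j-1-s}$. So $\Phi_{j,r,u}\in\C[x,x^{-1}]$, and it equals $\sum_d([x^d]\Phi_{j,r,u})x^d$.

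\emph{Support bounds.} For the upper bound, Lemma~\ref{lem:Phi-coeff-cutoff} says the coefficient sum is empty when $d>2n$, so $[x^d]\Phi_{j,r,u}=0$ for $d>2n$. For the lower bound, note that the lowest power of $x$ in the summand indexed by $(s,i)$ is $i-j-1-s$. This is at least $-j-1-(r-1)=-j-r$, since $i\ge0$ and $s\le r-1$. Hence the support lies in $-j-r\le d\le 2n$.

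\emph{Expansion formula.} Within this range, \eqref{eq:Phi-coeff-formula-cutoff} has upper summation limit $\min(r-1,2n-d)$. The displayed formula \eqref{eq:Phi-Laurent-expansion} instead sums $t$ up to $2n-d$. The two agree because the extra terms with $t>r-1$ contain the factor $\binom{d-(\wt u+n)}{r-1-t}$, which vanishes by the convention $\binom{\alpha}{k}=0$ for $k<0$. Summing over $d$ then gives \eqref{eq:Phi-Laurent-expansion}.

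\emph{Consequences.}
\begin{itemize}
\item Multiplying by $x^{j+r}$ shifts the support to degrees $0,\dots,2n+j+r$. This gives $x^{j+r}\Phi_{j,r,u}\in\C[x]$ with the stated degree bound.
\item If $j\le-1$, every summand exponent satisfies $i-j-1-s\ge -s$. Each such summand is $(1+x)^s x^{i-j-1-s}$. When $i-j-1\ge s$ it is clearly a polynomial. When $i-j-1<s$, the factor $\binom{i-j-1}{s}$ vanishes: here $i-j-1\ge0$, so the top entry is a nonnegative integer smaller than $s$. Hence $\Phi_{j,r,u}\in\C[x]$, and $\deg\Phi_{j,r,u}\le2n$ follows from the upper support bound.
\end{itemize}

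The only delicate point is the lower bound when $j\le-1$. There one must use the vanishing of $\binom{i-j-1}{s}$ rather than the crude exponent bound $-j-r$, which could be negative.
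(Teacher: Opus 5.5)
Your proof is correct and follows the paper's argument. The upper bound comes from the empty cutoff sum in Lemma~\ref{lem:Phi-coeff-cutoff}, the lower bound from the exponent estimate $i-j-1-s\ge -j-r$ in \eqref{eq:Phi-jru}, and polynomiality for $j\le-1$ from the vanishing of $\binom{i-j-1}{s}$ when $s>i-j-1\ge0$. You also make explicit a step the paper leaves implicit: the $t$-range can be extended from $\min(r-1,2n-d)$ to $2n-d$ because $\binom{d-(\wt u+n)}{r-1-t}=0$ for $t>r-1$.
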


\begin{proof}
	The bound $d\le2n$ is the emptiness of the sum in
	\eqref{eq:Phi-coeff-formula-cutoff} for $d>2n$. For the lower bound,
	every monomial in \eqref{eq:Phi-jru} has exponent
	$i-j-1-s\ge-j-1-(r-1)=-j-r$. If $j\le-1$, then $i-j-1\ge0$, so
	$\binom{i-j-1}{s}=0$ unless $s\le i-j-1$; every surviving exponent
	therefore lies in $[0,2n]$.
\end{proof}

\section{The diagonal kernel and correction residues}
\label{sec:product-commutator}
The main result of this section is the termwise correction theorem at
every word length, Theorem~\ref{thm:correction-residues-plus}. We start
from the one-sided expansion \eqref{eq:one-sided-iterate}. Applying the
residue kernel splits a residue into a product piece and one correction
piece for each position of the word (Lemma~\ref{lem:word-residue-split}).
After a finite normalization of the diagonal kernel
(Lemma~\ref{lem:G-support}), every correction piece is a finite sum of
product pieces of a contracted word with fewer $Y_L$ factors
(Proposition~\ref{prop:middle-operator-convolution}). At the first
position these product pieces vanish, so the first correction vanishes
(Corollary~\ref{prop:Rcomm-strip-vanish-main}); at the later positions
an induction on the number of $Y_L$ factors shows that they are
relations of shorter words or of singular words on the original letters
(Lemma~\ref{lem:contracted-products-tracked}).

\subsection{Residue pieces of a word}

Fix $n\in\N$ and the word $w$ of \eqref{eq:correction-word}, with its
subspace $U_w$ from \eqref{eq:word-split-modulus}. We keep this displayed
presentation when naming pieces: product pieces are attached to the
ordered inputs, even if the displayed word evaluates to zero. In that
case its assigned weight is the sum in \eqref{eq:correction-word}, and
its full residue is zero.

\begin{defn}[Residue pieces]
\label{def:word-pieces}
Let $m\ge k\ge0$ and $z\in V$. The \emph{product piece} of $w$ is
\begin{equation}\label{eq:word-product-piece}
\begin{aligned}
(wR^n_{m,k}z)_{\mathrm p}
:={}&\Res_x\frac{(1+x)^{\wt w+n+k}}{x^{m+2n+2}}\\
&\quad\cdot Y_L(s^{(N)}_{-r_N}\mathbf1,x)\cdots
Y_L(s^{(1)}_{-r_1}\mathbf1,x)z.
\end{aligned}
\end{equation}
For $0\le t\le N-2$ and $i\ge0$, the \emph{correction piece}
$(wR^n_{m,k}z)_{\mathrm m_t,i}$ is
\begin{equation}\label{eq:word-correction-piece}
\begin{aligned}
(wR^n_{m,k}z)_{\mathrm m_t,i}
:={}&\Res_x\frac{(1+x)^{\wt w+n+k}}{x^{m+2n+2}}
F^+_{i,r_{t+1},s^{(t+1)}}(x,x)
Y_L(s^{(t+1)}_iw^{(t+2)},x)\\
&\quad\cdot Y_L(s^{(t)}_{-r_t}\mathbf1,x)\cdots
Y_L(s^{(1)}_{-r_1}\mathbf1,x)z.
\end{aligned}
\end{equation}
The index $t$ is the number of trailing derivative fields. For $t=0$
the trailing product is empty, that is, it is the identity operator, so
\[
(wR^n_{m,k}z)_{\mathrm m_0,i}
=\Res_x\frac{(1+x)^{\wt w+n+k}}{x^{m+2n+2}}
F^+_{i,r_1,s^{(1)}}(x,x)Y_L(s^{(1)}_iw^{(2)},x)z.
\]
Only finitely many $i$ give $s^{(t+1)}_iw^{(t+2)}\ne0$, and we put
$(wR^n_{m,k}z)_{\mathrm m_t}:=\sum_{i\ge0}(wR^n_{m,k}z)_{\mathrm m_t,i}$.
The piece $(wR^n_{m,k}z)_{\mathrm m_0}$ is the \emph{first correction}.
\end{defn}

The correction piece $(wR^n_{m,k}z)_{\mathrm m_t}$ is the residue of the
summand with index $t$ in \eqref{eq:one-sided-iterate}, whose singular
field sits at position $t+1$ of $w$. The same definition is used with
$N=1$ for a single homogeneous input $y$ of descendant order $1$; there
are then no correction pieces, and
\begin{equation}\label{eq:one-input-product-piece}
(yR^n_{M,k}z)_{\mathrm p}
=\Res_x\frac{(1+x)^{\wt y+n+k}}{x^{M+2n+2}}Y_L(y,x)z
=0\qquad(M\ge k\ge0),
\end{equation}
where the vanishing is \cite[Remark~2.20]{Part1}: the series
$(1+x)^{\wt y+n}Y_L(y,x)z$ has no powers of $x$ above $x^{2n}$, and
$(1+x)^k$ raises the top degree to at most $2n+k<M+2n+1$.

\begin{lem}[Residue split]
\label{lem:word-residue-split}
For $m\ge k\ge0$ and $z\in V$, with $U_w$ the subspace
\eqref{eq:word-split-modulus} and the pieces of
Definition~\ref{def:word-pieces},
\begin{equation}\label{eq:word-residue-split}
wR^n_{m,k}z\equiv_{U_w}
\sum_{t=0}^{N-2}(wR^n_{m,k}z)_{\mathrm m_t}
+(wR^n_{m,k}z)_{\mathrm p}.
\end{equation}
\end{lem}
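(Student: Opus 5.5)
The residue split follows by applying the residue kernel to the one-sided iterate expansion \eqref{eq:one-sided-iterate}. First I check that the kernel in \eqref{eq:Rmk-def} for the vector $w$ is $(1+x)^{\wt w+n+k}x^{-m-2n-2}$, with $\wt w$ the assigned weight in \eqref{eq:correction-word}. If $w\ne0$, this is its true conformal weight, since each $s^{(t)}_{-r_t}$ raises weight by $\wt s^{(t)}+r_t-1$. If $w=0$, the left side is $0$. In that case the right side of \eqref{eq:one-sided-iterate} is still congruent to $Y(w,x)=0$ modulo $U_w$, so the argument goes through with the assigned weight. Hence
\[
wR^n_{m,k}z=\Res_x\frac{(1+x)^{\wt w+n+k}}{x^{m+2n+2}}\,Y(w,x)z .
\]

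Next I apply \eqref{eq:one-sided-iterate} to the input $z$. By Definition~\ref{def:W-congruence}(3), the difference between $Y(w,x)z$ and the right side of \eqref{eq:one-sided-iterate} applied to $z$ lies in $U_w((x))$. Definition~\ref{def:W-congruence} also says these congruences are preserved under multiplication by scalars in $\C((x))$ and under formal residues. Here $(1+x)^{\wt w+n+k}$ is expanded in nonnegative powers of $x$, so the kernel $(1+x)^{\wt w+n+k}x^{-m-2n-2}$ lies in $\C((x))$. Multiplying by the kernel and taking $\Res_x$ therefore gives a congruence modulo $U_w$.

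Finally I identify the residues term by term. The last summand of \eqref{eq:one-sided-iterate} gives exactly $(wR^n_{m,k}z)_{\mathrm p}$ of \eqref{eq:word-product-piece}. For each $t$ and $i$, the corresponding summand gives $(wR^n_{m,k}z)_{\mathrm m_t,i}$ of \eqref{eq:word-correction-piece}. Two points need checking for residue to commute with the sums:
\begin{itemize}
\item The sum over $i$ is finite, because $s^{(t+1)}_iw^{(t+2)}=0$ for large $i$.
\item Each product of $Y_L$ fields applied to $z$ lies in $V((x))$, since each $Y_L$ maps $V$ into $V((x))$. The diagonal kernel $F^+_{i,r,u}(x,x)$ lies in $\C((x))$ by \eqref{eq:Fium-plus-diag}: its first sum is finite, and its second sum converges $x$-adically because of the factors $x^k$.
\end{itemize}
So every term is a well-defined element of $V((x))$, and residue commutes with these finite sums. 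Summing over $i$ gives $(wR^n_{m,k}z)_{\mathrm m_t}$, and summing over $t$ yields \eqref{eq:word-residue-split}.

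The only real care needed is this well-definedness: that the products of fields and $F^+$ lie in $V((x))$, so that the residues are legitimate. The rest is the routine preservation of congruence under scalar multiplication and $\Res_x$.
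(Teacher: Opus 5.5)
Your proposal is correct and is the paper's argument: apply \eqref{eq:one-sided-iterate} to $z$, multiply by the scalar kernel $(1+x)^{\wt w+n+k}x^{-m-2n-2}$, take $\Res_x$, and match the resulting terms with Definition~\ref{def:word-pieces}, with Definition~\ref{def:W-congruence}(3) preserving the congruence modulo $U_w$. Your extra checks are not separate steps in the paper's proof. These are the assigned weight when $w=0$, finiteness of the $i$-sum, and $F^+_{i,r,u}(x,x)\in\C((x))$. The paper covers them by its remark on displayed words and by the fact that \eqref{eq:one-sided-iterate} is already an identity in $\Hom(V,V((x)))$.
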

\begin{proof}
Multiply \eqref{eq:one-sided-iterate}, applied to $z$, by the scalar
kernel $(1+x)^{\wt w+n+k}x^{-m-2n-2}$ and take $\Res_x$. The left side
becomes $wR^n_{m,k}z$ by \eqref{eq:Rmk-def}. The summand with index
$t$ of \eqref{eq:one-sided-iterate} becomes
$(wR^n_{m,k}z)_{\mathrm m_t}$ by \eqref{eq:word-correction-piece}, and
the last term becomes $(wR^n_{m,k}z)_{\mathrm p}$ by
\eqref{eq:word-product-piece}. Both operations preserve congruence
modulo $U_w$ (Definition~\ref{def:W-congruence}\textup{(3)}).
\end{proof}

All retained-mode products are finite after acting on a fixed
homogeneous $z$: every mode index is at least $-2n-1$, and the lower
bound on weights in $V$ bounds their sum above.  General $z$ are
finite sums of homogeneous vectors.

\subsection{A finite normalization of the diagonal kernel}

The complete diagonal kernel has a simple support bound after one
normalization.  The nonnegative-mode tails in
\eqref{eq:Fium-plus-diag} are included in this statement.

\begin{lem}[Finite normalization of the diagonal commutator kernel]
\label{lem:G-support}
For $n\in\N$, homogeneous $u\in V$, $r\ge1$, and $i\ge0$, set
\begin{equation}\label{eq:G-definition}
G^{(n)}_{i,r,u}(x):=(1+x)^{r+i}F^+_{i,r,u}(x,x).
\end{equation}
Then $G^{(n)}_{i,r,u}(x)$ is a Laurent polynomial supported in degrees
$[-r-i,0]$.  Consequently $(1+x)^kG^{(n)}_{i,r,u}(x)$ has degrees
only in $[-r-i,k]$ for $k\ge0$.
\end{lem}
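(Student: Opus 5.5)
The plan is to compute $F^+_{i,r,u}(x,x)$ in closed form as a rational function of $x$. I would show it has the shape
\[
F^+_{i,r,u}(x,x)=\frac{N_{i,r}(x)}{x^{r+i}(1+x)^{r+i}},\qquad \deg N_{i,r}\le r+i,
\]
with $N_{i,r}$ a polynomial. Then $G^{(n)}_{i,r,u}(x)=x^{-r-i}N_{i,r}(x)$ is a Laurent polynomial supported in $[-r-i,0]$. The statement about $(1+x)^kG^{(n)}_{i,r,u}(x)$ is then immediate, since multiplying by $(1+x)^k$ raises the top degree by at most $k$ and does not lower the bottom degree.

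\emph{Step 1: the lower bound.} This holds termwise in \eqref{eq:Fium-plus-diag}. By \eqref{eq:p-from-Phi-r} and Corollary~\ref{cor:Phi-Laurent-and-shifted-poly}, $p_{k,r,u}(x)$ has $x$-adic order at least $-k-r$. Hence $x^{k-i}p_{k,r,u}(x)$ has order at least $-r-i$ for each $-2n-1\le k\le-1$. The tail $x^{2n+2-r-i}\sum_{k\ge0}\binom ki P_{k,r,u}(x)x^k$ has order at least $2n+2-r-i$, and it converges $x$-adically because the $k$-th summand has order at least $k$. So $F^+_{i,r,u}(x,x)\in x^{-r-i}\C[[x]]$, and the same holds after multiplying by the unit $(1+x)^{r+i}$. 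All the work is in the upper bound.

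\emph{Step 2: resumming the tail.} By \eqref{eq:pm-Pm-decomp}, $x^{2n+2-r}P_{k,r,u}(x)=p_{k,r,u}(x)-\binom{-k-1}{r-1}x^{-k-r}$, so the whole kernel becomes
\[
F^+_{i,r,u}(x,x)=\sum_{k\ge-2n-1}\binom ki x^{k-i}\Bigl(p_{k,r,u}(x)-[k\ge0]\tbinom{-k-1}{r-1}x^{-k-r}\Bigr).
\]
Write $a=\wt u+n$. I would use that the derivative in \eqref{eq:pjmu-def} is a shift, $p_{k,r,u}(x)=[y^{r-1}]\,p_{k,1,u}(x+y)$, together with
\[
p_{k,1,u}(x)=x^{-k-1}-(1+x)^{-a}\sum_{l>2n+k+1}\binom al x^{l-k-1},
\]
which is just truncation of $(1+x)^a$. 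The subtracted term $\binom{-k-1}{r-1}x^{-k-r}$ is exactly $[y^{r-1}](x+y)^{-k-1}$ expanded in nonnegative powers of $y$. So for $k\ge0$ the bracket is minus the truncation-tail part, and only the finitely many $k\le-1$ retain the principal part $(x+y)^{-k-1}$.

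To handle the sum over $k$ with the weight $\binom ki$, I would introduce a generating variable $z$ and form $\sum_i z^i\binom ki x^{-i}=(1+z/x)^k$. The $k$-sums then become geometric series in $x(1+z/x)/(x+y)$ and in related ratios, which I would evaluate as rational functions. Extracting $[z^i]$ and $[y^{r-1}]$ should leave a rational function in $x$ whose only possible poles are at $x=0$ of order at most $r+i$ and at $x=-1$. The pole at $x=-1$ comes from $(1+x)^{-a}$ and from the $y$-derivatives, and I expect its order to be at most $r+i$ after the $a$-dependence cancels between the truncated and untruncated pieces.

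\emph{The main obstacle} is Step 2, and in particular checking that all dependence on $a=\wt u+n$ cancels, so that no factor $(1+x)^{-a}$ survives and the total pole order at $x=-1$ is at most $r+i$. The sum over $l$ in the truncation tail has to telescope against the sum over $k$. I would first verify the identity in the case $r=1$, $i=0$, where the claim is simply that $(1+x)F^+_{0,1,u}(x,x)$ is $c_0+c_{-1}x^{-1}$. I would then obtain general $r$ by applying $[y^{r-1}]$ to the $r=1$ closed form, and general $i$ by extracting $[z^i]$.

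Once the rational form is established, the upper bound follows from the growth of $F^+_{i,r,u}(x,x)$ at infinity. I would bound the degree of the numerator by viewing the closed form as an element of $\C((x^{-1}))$ and checking that it is $O(x^{-r-i}\cdot x^{0})$ after multiplying by $(1+x)^{r+i}$. This is the step most likely to need care with expansion directions.
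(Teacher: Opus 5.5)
Your route differs from the paper's and it does go through. However, the upper bound, which is the only nontrivial part of the lemma, is left as an expectation (``should leave'', ``I expect''). Here is how it closes.

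Put $N=2n+1$ and $a=\wt u+n$. Keep $x$ and $x+y$ separate: the shift acts only inside $p$, not on $(x+z)^k=\sum_i\binom ki x^{k-i}z^i$. So obtaining general $r$ from the $r=1$ closed form is legitimate only for such a two-variable form, not for the diagonal kernel.

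In the truncation-tail part, interchange the sums. For fixed $l\ge1$ the range is $-N\le k\le l-N-1$. The interchange is legitimate because, before the factor $(1+x+y)^{-a}$, the $(k,l)$ term contributes to $[y^{r-1}z^i]$ only a multiple of $x^{l-r-i}$. Each $k$-sum is then a finite geometric sum in $\rho=(x+z)/(x+y)$. The $l$-sum becomes $\sum_{l\ge1}\binom al\bigl((x+z)^l-(x+y)^l\bigr)=(1+x+z)^a-(1+x+y)^a$, so the tail gives $-\rho^{-N}(\sigma^a-1)/(z-y)$ with $\sigma=(1+x+z)/(1+x+y)$. The principal part gives $(1-\rho^{-N})/(z-y)$. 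Altogether, with expansions in nonnegative powers of $y,z$,
\[
\sum_{r\ge1,\,i\ge0}y^{r-1}z^{i}F^+_{i,r,u}(x,x)
=\frac{1-\rho^{-N}\sigma^{a}}{z-y}
=-\sum_{p+q\ge1}\binom{-N}{p}\binom{a}{q}
\frac{(z-y)^{p+q-1}}{(x+y)^{p}(1+x+y)^{q}}.
\]
The coefficient of $y^{r-1}z^i$ in the $(p,q)$ term is a combination of $x^{-p-d_1}(1+x)^{-q-d_2}$ with $d_1+d_2=r+i-p-q$. Multiplying by $(1+x)^{r+i}$ therefore produces $x^{-e}(1+x)^{e}$ with $0\le e=p+d_1\le r+i$. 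That is the lemma. It also makes your Step~1 and the pole analysis at $x=-1$ and at $\infty$ unnecessary.

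As a check, the coefficient of $y^0z^0$ is $N/x-a/(1+x)$, which is the paper's example when $n=\wt u=1$. Note that not all dependence on $a$ cancels: only the factor $(1+x)^{-a}$ disappears, while the coefficients $\binom aq$ remain.

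The paper avoids generating functions. It multiplies by $(1+x)^{\wt u+n+r-1}$ and writes band plus tail as one sum over the binomial-series index $\ell$ of $\Phi$, whose $j$-range is the difference of the indicators $j\ge\ell-2n-1$ and $j\ge0$. It then notes that this finite $j$-sum is a polynomial in $\ell$ of degree at most $i+s+1$, and resums with $\sum_\ell\binom a\ell\binom\ell q x^\ell=\binom aq x^q(1+x)^{a-q}$. It reaches the same shape, a combination of $x^{-e}(1+x)^{e}$ with $0\le e\le i+r$, but leaves the constants $c_{s,q}$ implicit.

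Both arguments rest on the observation you already make: for $k\ge0$ the tail is minus the truncated binomial tail. The paper's version needs only an antidifference degree count. Yours costs two auxiliary variables and a rearrangement check, but it yields an explicit generating function for all $G^{(n)}_{i,r,u}$ at once. In it the weight enters only through $\sigma^{a}$, a power series in $(z-y)/(1+x+y)$.
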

\begin{proof}
The idea is that, for $j\ge0$, the tail $x^{2n+2-r}P_{j,r,u}(x)$ is
$p_{j,r,u}(x)$ minus its principal part, and the principal part is what
the full binomial series in \eqref{eq:pjmu-def} would contribute. After
multiplying by $(1+x)^{\wt u+n+r-1}$, the band and the tails therefore
combine into finite signed sums over $j$ whose dependence on the
binomial index is polynomial, and such sums resum to powers of $1+x$.

By \eqref{eq:pm-Pm-decomp}, the second term in
\eqref{eq:Fium-plus-diag} is
\[
\sum_{j\ge0}\binom ji x^{j-i}
\left(p_{j,r,u}(x)-\binom{-j-1}{r-1}x^{-j-r}\right).
\]
Using
\eqref{eq:p-from-Phi-r}, extend the binomial sum in
\eqref{eq:Phi-jru} to its full series.  Its full contribution is
\[
\binom{-j-1}{r-1}x^{-j-r}(1+x)^{\wt u+n+r-1},
\]
because the full $p$-series before differentiating is $x^{-j-1}$.
Thus, for each binomial-series index $\ell$, the retained contribution
has the indicator $j\ge\ell-2n-1$, whereas the subtracted full
contribution has the indicator $j\ge0$.  Combining these indicators
with the finite negative-$j$ range gives
\begin{equation}\label{eq:G-binomial}
\begin{aligned}
x^{i+r}(1+x)^{\wt u+n+r-1}F^+_{i,r,u}(x,x)
={}&\sum_{\ell\ge0}\binom{\wt u+n}{\ell}x^\ell
\sum_{s=0}^{r-1}\binom{-\wt u-n}{r-1-s}\\
&\quad\cdot(1+x)^s x^{r-1-s}
\sum_{j=\ell-2n-1}^{-1}\binom ji\binom{\ell-j-1}{s}.
\end{aligned}
\end{equation}
Here $\sum_{j=A}^{-1}=-\sum_{j=0}^{A-1}$ if $A>0$; the sum is
empty if $A=0$.  This is a coefficientwise identity: for fixed
$\ell$, the difference of the indicators leaves only finitely many
$j$.  Also $\ell+r-1-s\ge\ell$, so each coefficient of the right
side involves only finitely many $\ell$.

For fixed $i,s,n$, the signed endpoint sum in
\eqref{eq:G-binomial} is a polynomial in $\ell$ of degree at most
$i+s+1$.  Indeed, its summand is a polynomial in $j,\ell$ of total
degree at most $i+s$; a polynomial antidifference in $j$ has total
degree at most $i+s+1$, and is evaluated at two affine endpoints.
Write that polynomial as
$\sum_{q=0}^{i+s+1}c_{s,q}\binom\ell q$.  The formal identity
\[
\sum_{\ell\ge0}\binom{\wt u+n}{\ell}\binom\ell q x^\ell
=\binom{\wt u+n}{q}x^q(1+x)^{\wt u+n-q}
\]
holds for every integer value of $\wt u+n$.  Substitution in \eqref{eq:G-binomial} therefore gives
\begin{equation}\label{eq:G-finite}
\begin{aligned}
G^{(n)}_{i,r,u}(x)
={}&\sum_{s=0}^{r-1}\sum_{q=0}^{i+s+1}
\binom{-\wt u-n}{r-1-s}c_{s,q}\binom{\wt u+n}{q}\\
&\qquad\cdot
x^{-(i+1+s-q)}(1+x)^{i+1+s-q}.
\end{aligned}
\end{equation}
Every exponent $i+1+s-q$ lies between $0$ and $i+r$.
Each summand has degrees between $-i-r$ and $0$, proving the
claim.  Multiplication by the polynomial $(1+x)^k$ gives the
second assertion.
\end{proof}

The point of the normalization is that $G^{(n)}_{i,r,u}$ has no
positive powers of $x$.  It need not be an ordinary polynomial.
For example, when $n=1$, $r=1$, $i=0$, and $\wt u=1$,
\eqref{eq:Fium-plus-diag} and \eqref{eq:Phi-jru} give
\[
F^+_{0,1,u}(x,x)=\frac{3+x}{x(1+x)},
\qquad G^{(1)}_{0,1,u}(x)=3x^{-1}+1.
\]

\subsection{Contracted words and the shortening identity}

Compare the correction piece \eqref{eq:word-correction-piece} with a
product piece. Its fields are $Y_L(s^{(t+1)}_iw^{(t+2)},x)$ followed by
the $t$ derivative fields of the first $t$ letters. These are exactly
the fields of the product piece of a word with $t+1$ ordered inputs,
whose last input is the singular vector $s^{(t+1)}_iw^{(t+2)}$. Only the
scalar kernels differ, and Lemma~\ref{lem:G-support} shows that the
quotient of the two kernels is a Laurent polynomial. We first name the
shorter word.

Fix $0\le t\le N-2$ and $i\ge0$. The \emph{contraction of $w$ at
position $t+1$ with singular index $i$} is the displayed word
\begin{equation}\label{eq:contracted-word}
\begin{aligned}
y&:=s^{(1)}_{-r_1}\cdots s^{(t)}_{-r_t}\,v,
\qquad v:=s^{(t+1)}_iw^{(t+2)},\\
\text{that is,}\quad
y&=s^{(1)}_{-r_1}\cdots s^{(t)}_{-r_t}\,
s^{(t+1)}_{i}\,s^{(t+2)}_{-r_{t+2}}\cdots s^{(N)}_{-r_N}\mathbf1 .
\end{aligned}
\end{equation}
It is obtained from $w$ by replacing the mode $-r_{t+1}$ of the letter
$s^{(t+1)}$ by the nonnegative mode $i$. For $t=0$ we have $y=v$. We
regard $y$ as a word with the $t+1$ ordered homogeneous inputs
$s^{(1)},\ldots,s^{(t)},v$ and descendant orders $r_1,\ldots,r_t,1$,
since $v=v_{-1}\mathbf1$. Its product piece is
Definition~\ref{def:word-pieces} for this presentation:
\begin{equation}\label{eq:contracted-product-piece}
\begin{aligned}
(yR^n_{M,0}z)_{\mathrm p}
={}&\Res_x\frac{(1+x)^{\wt y+n}}{x^{M+2n+2}}\\
&\quad\cdot Y_L(v,x)\,Y_L(s^{(t)}_{-r_t}\mathbf1,x)\cdots
Y_L(s^{(1)}_{-r_1}\mathbf1,x)z\qquad(M\ge0),
\end{aligned}
\end{equation}
where $Y_L(v_{-1}\mathbf1,x)=Y_L(v,x)$ by
\eqref{eq:descendant-derivative-convention}. For $t=0$ this is
\eqref{eq:one-input-product-piece} with $y=v$ and $k=0$.

Since $\wt(a_jb)=\wt a+\wt b-j-1$ for homogeneous $a,b$,
\[
\wt\bigl(s^{(t+1)}_{-r_{t+1}}w^{(t+2)}\bigr)
-\wt\bigl(s^{(t+1)}_{i}w^{(t+2)}\bigr)
=(r_{t+1}-1)-(-i-1)=r_{t+1}+i,
\]
and the modes $s^{(1)}_{-r_1},\ldots,s^{(t)}_{-r_t}$ raise both weights by
the same amount. Hence
\begin{equation}\label{eq:contracted-weight}
\wt w-\wt y=r_{t+1}+i.
\end{equation}
With \eqref{eq:G-definition}, this gives the kernel identity
\begin{equation}\label{eq:kernel-normalization}
\begin{aligned}
(1+x)^{\wt w+n+k}F^+_{i,r_{t+1},s^{(t+1)}}(x,x)
&=(1+x)^{\wt y+n}(1+x)^{k}(1+x)^{r_{t+1}+i}F^+_{i,r_{t+1},s^{(t+1)}}(x,x)\\
&=(1+x)^{\wt y+n}(1+x)^{k}G^{(n)}_{i,r_{t+1},s^{(t+1)}}(x).
\end{aligned}
\end{equation}

\begin{prop}[Shortening identity]
\label{prop:middle-operator-convolution}
Let $0\le t\le N-2$, $i\ge0$, $m\ge k\ge0$, and $z\in V$, and let $y$
be the contracted word \eqref{eq:contracted-word}. Then, as an exact
identity in $V$,
\begin{equation}\label{eq:middle-operator-convolution}
(wR^n_{m,k}z)_{\mathrm m_t,i}
=\sum_{\ell=-r_{t+1}-i}^{k}
[x^\ell]\bigl((1+x)^kG^{(n)}_{i,r_{t+1},s^{(t+1)}}(x)\bigr)\,
(yR^n_{m-\ell,0}z)_{\mathrm p}.
\end{equation}
Every shifted index is admissible: $m-\ell\ge m-k\ge0$. If
$s^{(t+1)}_iw^{(t+2)}=0$, both sides are zero.
\end{prop}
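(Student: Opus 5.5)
The proof is a direct substitution of the kernel identity \eqref{eq:kernel-normalization} into the definition \eqref{eq:word-correction-piece}, followed by expanding the finite Laurent polynomial $(1+x)^kG^{(n)}_{i,r_{t+1},s^{(t+1)}}(x)$ into monomials and absorbing each monomial into a shift of the pole order.

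\emph{Step 1: rewrite the kernel.} By \eqref{eq:kernel-normalization}, the scalar kernel of $(wR^n_{m,k}z)_{\mathrm m_t,i}$ equals
\[
\frac{(1+x)^{\wt y+n}}{x^{m+2n+2}}\,(1+x)^kG^{(n)}_{i,r_{t+1},s^{(t+1)}}(x).
\]
The operator part, $Y_L(v,x)Y_L(s^{(t)}_{-r_t}\mathbf1,x)\cdots Y_L(s^{(1)}_{-r_1}\mathbf1,x)z$ with $v=s^{(t+1)}_iw^{(t+2)}$, is exactly the operator part of \eqref{eq:contracted-product-piece}.

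\emph{Step 2: expand the Laurent polynomial.} By Lemma~\ref{lem:G-support}, $(1+x)^kG^{(n)}_{i,r_{t+1},s^{(t+1)}}(x)=\sum_{\ell=-r_{t+1}-i}^{k}c_\ell x^\ell$, where $c_\ell=[x^\ell]\bigl((1+x)^kG^{(n)}_{i,r_{t+1},s^{(t+1)}}(x)\bigr)$. This is a finite sum, so it commutes with $\Res_x$. The summand with index $\ell$ has kernel $(1+x)^{\wt y+n}x^{-(m-\ell)-2n-2}$, and its residue is precisely $(yR^n_{m-\ell,0}z)_{\mathrm p}$ as defined in \eqref{eq:contracted-product-piece} with $M=m-\ell$. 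Admissibility holds because $\ell\le k$ gives $M\ge m-k\ge0$. This yields \eqref{eq:middle-operator-convolution} as an exact identity.

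\emph{Step 3: justify the manipulation.} Multiplying the element $Y_L(\cdots)z\in V((x))$ by the scalar Laurent series $(1+x)^{\wt y+n}x^{-m-2n-2}$ and by a Laurent polynomial is well defined, and the product is associative. The only subtlety is that $(1+x)^{\wt w+n+k}$ and $(1+x)^{\wt y+n}$ are both expanded in nonnegative powers of $x$ by \eqref{eq:binomial-expansion}, so splitting off $(1+x)^{r_{t+1}+i+k}$ is a valid identity in $\C((x))$. Here $\wt w-\wt y=r_{t+1}+i$ is an integer by \eqref{eq:contracted-weight}. Likewise $F^+$ lies in $\C((x))$, so \eqref{eq:G-definition} is an identity in $\C((x))$.

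\emph{Degenerate case.} If $v=0$, then $Y_L(v,x)=0$, so both sides vanish. The weight $\wt y$ is then the assigned displayed weight, consistent with the convention stated before Definition~\ref{def:word-pieces}, and \eqref{eq:contracted-weight} still holds formally with that assigned weight.

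The only real obstacle is bookkeeping: checking that the weight used in \eqref{eq:contracted-product-piece} is the one that makes \eqref{eq:kernel-normalization} exact. That check is the computation \eqref{eq:contracted-weight}.
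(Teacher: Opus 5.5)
Your proposal is correct and is essentially the paper's proof. Substitute \eqref{eq:kernel-normalization} into \eqref{eq:word-correction-piece}, expand $(1+x)^kG^{(n)}_{i,r_{t+1},s^{(t+1)}}(x)$ as the finite Laurent polynomial given by Lemma~\ref{lem:G-support}, and absorb each monomial $x^\ell$ into the shifted pole order $m-\ell$; by \eqref{eq:contracted-product-piece} each resulting residue is $(yR^n_{m-\ell,0}z)_{\mathrm p}$, while your extra checks in Step~3 and the degenerate case are accurate but left implicit in the paper.
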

\begin{proof}
Substitute \eqref{eq:kernel-normalization} into
\eqref{eq:word-correction-piece}. By Lemma~\ref{lem:G-support},
\[
(1+x)^kG^{(n)}_{i,r_{t+1},s^{(t+1)}}(x)
=\sum_{\ell=-r_{t+1}-i}^{k}c_\ell x^\ell,
\qquad
c_\ell:=[x^\ell]\bigl((1+x)^kG^{(n)}_{i,r_{t+1},s^{(t+1)}}(x)\bigr),
\]
a finite sum. Since $x^{\ell}x^{-m-2n-2}=x^{-(m-\ell)-2n-2}$, the
correction piece equals
\[
\sum_{\ell=-r_{t+1}-i}^{k}c_\ell\,
\Res_x\frac{(1+x)^{\wt y+n}}{x^{(m-\ell)+2n+2}}
Y_L(v,x)Y_L(s^{(t)}_{-r_t}\mathbf1,x)\cdots
Y_L(s^{(1)}_{-r_1}\mathbf1,x)z,
\]
with $v=s^{(t+1)}_iw^{(t+2)}$. By \eqref{eq:contracted-product-piece}
the residue in the summand indexed by $\ell$ is
$(yR^n_{m-\ell,0}z)_{\mathrm p}$.
\end{proof}

The shortening concerns the number of $Y_L$ factors, not the number of
original letters: the contracted input $v$ still contains its original
suffix letters and its singular first mode. The identity uses only the
homogeneity of the ordered inputs, so it applies to any word of the form
\eqref{eq:correction-word} in homogeneous letters, in particular to a
contracted word with the vector $v$ as its last letter.

\begin{cor}[Full fields and the first correction]
\label{prop:Rcomm-strip-vanish-main}
\label{cor:Y-comm-residue-in-sumOnuiv}
\label{cor:first-correction-arbitrary-length}
Let $m\ge k\ge0$, $z\in V$, $0\le t\le N-2$, and $i\ge0$, and put
$v=s^{(t+1)}_iw^{(t+2)}$.
\begin{enumerate}[label=\textup{(\roman*)}]
\item Replacing the leftmost field $Y_L(v,x)$ in
\eqref{eq:word-correction-piece} by the full field $Y(v,x)$ changes the
correction piece only by an element of $O_n^\circ(v,V)$:
\begin{equation}\label{eq:full-field-correction}
\begin{aligned}
&\Res_x\frac{(1+x)^{\wt w+n+k}}{x^{m+2n+2}}
F^+_{i,r_{t+1},s^{(t+1)}}(x,x)Y(v,x)\\
&\qquad\cdot Y_L(s^{(t)}_{-r_t}\mathbf1,x)\cdots
Y_L(s^{(1)}_{-r_1}\mathbf1,x)z
\;\equiv_{O_n^\circ(v,V)}\;(wR^n_{m,k}z)_{\mathrm m_t,i}.
\end{aligned}
\end{equation}
\item The first correction vanishes: for every $i\ge0$,
\[
(wR^n_{m,k}z)_{\mathrm m_0,i}=0\quad\text{in }V,
\qquad\text{hence}\qquad
(wR^n_{m,k}z)_{\mathrm m_0}=0.
\]
\item With the full field,
\[
\Res_x\frac{(1+x)^{\wt w+n+k}}{x^{m+2n+2}}
\sum_{i\ge0}F^+_{i,r_1,s^{(1)}}(x,x)Y(s^{(1)}_iw^{(2)},x)z
\in\sum_{i\ge0}O_n^\circ(s^{(1)}_iw^{(2)},V),
\]
and each fixed-$i$ summand lies in its own slice
$O_n^\circ(s^{(1)}_iw^{(2)},V)$. For $N=2$ this is the expected inclusion
\cite[(4.10)]{Part1}.
\end{enumerate}
\end{cor}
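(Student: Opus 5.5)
Parts (i), (ii) and (iii) are handled in that order, and each reduces to results already stated.

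For (i), the only change is in the leftmost field, so everything to its right can be frozen. Put $z':=Y_L(s^{(t)}_{-r_t}\mathbf1,x)\cdots Y_L(s^{(1)}_{-r_1}\mathbf1,x)z$. By the support bounds after Lemma~\ref{lem:word-residue-split}, its coefficients are finite combinations of vectors of $V$, so $z'\in V((x))$ with bounded below powers. By the table entry for \cite[Proposition~3.1]{Part1}, $Y(v,x)\equiv_{O_n^\circ(v,V)}Y_L(v,x)$ as maps $V\to V((x))$. This congruence is coefficientwise, and it survives multiplication by the scalar Laurent series $(1+x)^{\wt w+n+k}x^{-m-2n-2}F^+_{i,r_{t+1},s^{(t+1)}}(x,x)$ and taking $\Res_x$ (Definition~\ref{def:W-congruence}). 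The one point to check is that only finitely many coefficients of $z'$ enter each residue. Here I would use the kernel identity \eqref{eq:kernel-normalization} together with Lemma~\ref{lem:G-support}: they show that the scalar kernel is $(1+x)^{\wt y+n}$ times a Laurent polynomial. The residue therefore reduces, exactly as in Proposition~\ref{prop:middle-operator-convolution}, to a finite sum of expressions of the form $\Res_x(1+x)^{\wt y+n}x^{-M-2n-2}\widehat Y(v,x)z'$, with $\widehat Y\in\{Y,Y_L\}$. The difference of the $Y$ and $Y_L$ versions lies in $O_n^\circ(v,V)$ because all of its coefficients do.

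For (ii), apply Proposition~\ref{prop:middle-operator-convolution} with $t=0$. The contracted word is then $y=v$, a single input of descendant order $1$, and every shifted index satisfies $m-\ell\ge0$. The correction piece $(wR^n_{m,k}z)_{\mathrm m_0,i}$ therefore becomes a finite combination of the pieces $(vR^n_{m-\ell,0}z)_{\mathrm p}$. Each of these vanishes by \eqref{eq:one-input-product-piece} with $M=m-\ell\ge0$ and $k=0$. Summing over the finitely many $i$ gives $(wR^n_{m,k}z)_{\mathrm m_0}=0$.

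For (iii), take $t=0$ in (i). The trailing product is then empty, so the fixed-$i$ summand with the full field $Y(v,x)$ is congruent modulo $O_n^\circ(s^{(1)}_iw^{(2)},V)$ to $(wR^n_{m,k}z)_{\mathrm m_0,i}$, and this is $0$ by (ii). Each summand therefore lies in its own slice, and summing over $i$ gives the stated inclusion. The case $N=2$ is the inclusion \cite[(4.10)]{Part1}.

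The main obstacle is the justification in (i) that residues of products with a $V((x))$-valued input still respect the congruence. I would handle it by the finite Laurent-polynomial reduction above, so that only ordinary residues of $Y(v,x)$ against finitely many coefficients of $z'$ appear. I would also check that $v$ is homogeneous, since $R^n$ uses $\wt v$. This holds because $v=s^{(t+1)}_iw^{(t+2)}$ is a mode of a homogeneous vector applied to a homogeneous vector; its weight is the $\wt y$ fixed by \eqref{eq:contracted-weight}.
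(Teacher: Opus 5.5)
Your proposal is correct and follows the paper's proof. In (i), the trailing product is a finite sum $\sum_\alpha f_\alpha(x)z_\alpha$ with $f_\alpha\in\C((x))$ and $z_\alpha\in V$. Part~I's coefficientwise congruence $Y(v,x)z_\alpha\equiv_{O_n^\circ(v,V)}Y_L(v,x)z_\alpha$ is applied to each $z_\alpha$, and it survives multiplication by scalar series and $\Res_x$. Part (ii) is the shortening identity at $t=0$ combined with \eqref{eq:one-input-product-piece}, and (iii) combines the two.

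The kernel-normalization step you add in (i) is unnecessary. The finite-sum form of the trailing product already makes every product and residue finite, and the congruence holds coefficientwise, so no reduction to residues of the form $vR^n_{M',0}$ is needed. One small slip: $\wt v=\wt y$ only when $t=0$.
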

\begin{proof}
(i) By \cite[Lemma~3.7(a)]{Part1}, the vector
$Y_L(s^{(t)}_{-r_t}\mathbf1,x)\cdots Y_L(s^{(1)}_{-r_1}\mathbf1,x)z$
is a finite sum of terms $f(x)z'$ with $f(x)\in\C((x))$ and $z'\in V$;
for $t=0$ it is $z$. For each such $z'$, leftmost replacement
\cite[Lemma~3.3]{Part1} gives
$Y(v,x)z'\equiv_{O_n^\circ(v,V)}Y_L(v,x)z'$. Multiplication by the
scalar series $f(x)$ and by the scalar kernel, and extraction of the
residue, preserve this congruence
(Definition~\ref{def:W-congruence}\textup{(3)}).

(ii) For $t=0$ the contracted word is $y=v$, a single input. By
Proposition~\ref{prop:middle-operator-convolution},
$(wR^n_{m,k}z)_{\mathrm m_0,i}$ is a finite linear combination of the
residues $(vR^n_{m-\ell,0}z)_{\mathrm p}$ with $m-\ell\ge0$, and each is
zero by \eqref{eq:one-input-product-piece}.

(iii) Combine (i) and (ii) with $t=0$, and sum over the finitely many
$i$ with $s^{(1)}_iw^{(2)}\ne0$.
\end{proof}

\subsection{Letter tracking and the termwise theorem}

From now on $S\subset V$ is a finite homogeneous set and the letters
$s^{(1)},\ldots,s^{(N)}$ of $w$ lie in $S$. Every word that occurs below
is a displayed vacuum word in letters of $S$, so whether its relations
lie in $O_n^\circ(N,S,V)^+$ can be read off from
Definition~\ref{def:OnN-SV}. We record slightly more, namely which
letters occur; Part~III \cite{Part3} uses this information.

\begin{defn}[Residues retaining the original letters]
\label{def:letter-tracked-residues}
Relative to the displayed word $w$ in \eqref{eq:correction-word},
call a residue $w'R^n_{m',k'}z'$, with $m'\ge k'\ge0$ and $z'\in V$,
\emph{letter-tracked} if its displayed left-input word $w'$ has
one of the following forms:
\begin{enumerate}[label=\textup{(T\arabic*)}]
\item its letters form a proper sub-multiset of the letters of $w$;
\item it has all the letters of $w$, counted with multiplicity,
and a nonnegative mode before its last letter.
\end{enumerate}
The last mode of $w'$ is negative in both cases, as in
\eqref{eq:finite-length-vacuum-words}.  A vector is letter-tracked
if it is a finite sum of such residues.  Type~\textup{(T1)}
residues lie in $O_n^\circ(N-1,S,V)$ and type~\textup{(T2)}
residues lie in $O_n^\circ(N,S,V)^+$, by
Definition~\ref{def:OnN-SV}; hence every letter-tracked vector belongs
to $O_n^\circ(N,S,V)^+$.
\end{defn}

The contracted words \eqref{eq:contracted-word} have all the letters of
$w$ and the nonnegative mode $i$ at position $t+1\le N-1$, so their
residues have type~\textup{(T2)}. Contracting again produces words of
the same kind, with possibly several nonnegative or arbitrary modes
after the first contracted position. The next lemma handles all of
them at once.

\begin{lem}[Product pieces of contracted words]
\label{lem:contracted-products-tracked}
Let $1\le L\le N-1$, and let $v$ be a displayed word in the letters
$s^{(L)},\ldots,s^{(N)}$, in this order and with multiplicity, of the
form
\begin{equation}\label{eq:contracted-suffix}
v=s^{(L)}_{j_L}s^{(L+1)}_{j_{L+1}}\cdots s^{(N-1)}_{j_{N-1}}
s^{(N)}_{-r_N}\mathbf1,
\qquad j_L\ge0,\quad j_{L+1},\ldots,j_{N-1}\in\Z.
\end{equation}
Put $y:=s^{(1)}_{-r_1}\cdots s^{(L-1)}_{-r_{L-1}}v$, regarded as a word
with the $L$ ordered inputs $s^{(1)},\ldots,s^{(L-1)},v$ and descendant
orders $r_1,\ldots,r_{L-1},1$. Then for all $m'\ge k'\ge0$ and $z\in V$,
the product piece $(yR^n_{m',k'}z)_{\mathrm p}$ is letter-tracked
relative to $w$ in the sense of Definition~\ref{def:letter-tracked-residues}.
\end{lem}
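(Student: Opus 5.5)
Induction on $L$, the number of $Y_L$ factors in the product piece of $y$. The engine is a converse of the residue split: applied to the word $y$ itself, Lemma~\ref{lem:word-residue-split} writes the full residue $yR^n_{m',k'}z$ as its product piece plus correction pieces plus an element of the subspace $U_y$ of \eqref{eq:word-split-modulus}. So we solve for the product piece,
\[
(yR^n_{m',k'}z)_{\mathrm p}\equiv_{U_y} yR^n_{m',k'}z-\sum_{t=0}^{L-2}(yR^n_{m',k'}z)_{\mathrm m_t},
\]
and show that each term on the right, and each generator of $U_y$, is letter-tracked relative to $w$.

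\emph{Base case $L=1$.} Here $y=v$ is a single input of descendant order $1$. Its product piece vanishes by \eqref{eq:one-input-product-piece}, which is trivially letter-tracked.

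\emph{Step 1: the full residue.} The full residue $yR^n_{m',k'}z$ is a residue of the displayed word $y$. This word has all letters of $w$ with multiplicity and a nonnegative mode $j_L$ at position $L\le N-1$, before the last letter. Its last mode is $-r_N<0$. Hence it is of type~(T2).

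\emph{Step 2: generators of $U_y$.} These come in two kinds.
\begin{itemize}
\item $O_n^\circ(s^{(q)},V)$ for $q\le L-1$ consists of residues of the one-letter words $s^{(q)}=s^{(q)}_{-1}\mathbf1$, which are of type~(T1) since $N\ge2$.
\item $O_n^\circ(v,V)$ consists of residues of $v$, which omits the letters $s^{(1)},\dots,s^{(L-1)}$. If $L\ge2$ it is of type~(T1). If $L=1$, then $v$ is exactly the situation of Step~1 and is of type~(T2).
\end{itemize}
The remaining generators are $O_n^\circ(s^{(t+1)}_i y^{(t+2)},V)$ with $t+1\le L-1$, where $y^{(t+2)}=s^{(t+2)}_{-r_{t+2}}\cdots v$. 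That word has all letters $s^{(t+1)},\dots,s^{(N)}$ and nonnegative mode $i$ in first position. It is of type~(T1) when $t\ge1$, and of type~(T2) when $t=0$, since position $1\le N-1$.

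The homogeneity hypotheses of Part~I's expansion hold because $v$ is homogeneous. If $v=0$, both sides vanish, so we may assume $v\ne0$.

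\emph{Step 3: correction pieces.} By Proposition~\ref{prop:middle-operator-convolution}, applied to $y$ with its $L$ inputs, each $(yR^n_{m',k'}z)_{\mathrm m_t,i}$ is a finite combination of product pieces $(y'R^n_{M,0}z)_{\mathrm p}$. Here $y'$ is the contraction of $y$ at position $t+1\le L-1$. Then $y'=s^{(1)}_{-r_1}\cdots s^{(t)}_{-r_t}v'$ with
\[
v'=s^{(t+1)}_i s^{(t+2)}_{-r_{t+2}}\cdots s^{(L-1)}_{-r_{L-1}}\,v .
\]
This $v'$ is again of the form \eqref{eq:contracted-suffix}, with $L$ replaced by $t+1<L$. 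Its first mode is $i\ge0$ and the intermediate modes are arbitrary, which is exactly why the lemma allows arbitrary $j$'s. The induction hypothesis therefore applies and makes these product pieces letter-tracked. For $t=0$ one can also invoke Corollary~\ref{prop:Rcomm-strip-vanish-main}(ii) directly.

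\emph{Expected obstacle.} The main difficulty is bookkeeping rather than analysis. One must check that Part~I's one-sided expansion, Proposition~3.11, is valid for the displayed presentation of $y$ with the composite last input $v$ (the expansion stops at $v$ and does not expand inside it). One must also track that every word arising in $U_y$ has a negative last mode, as Definition~\ref{def:letter-tracked-residues} requires. This holds because the last letter is always either $v$ itself, ending in $s^{(N)}_{-r_N}$, or a single letter in descendant form.
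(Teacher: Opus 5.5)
Your proposal is correct and follows the paper's proof essentially step for step. You induct on $L$ with the single-input vanishing \eqref{eq:one-input-product-piece} as the base case. You then solve the residue split of Lemma~\ref{lem:word-residue-split} for the product piece of $y$ and check each remaining term: the full residue is of type (T2); the summands of $U_y$ are of type (T1), except the $t=0$ singular slice, which is (T2); and the correction pieces are handled by Proposition~\ref{prop:middle-operator-convolution} together with the induction hypothesis for the contraction $v'$, with $L$ replaced by $t+1$. Your $L=1$ subcase for $O_n^\circ(v,V)$ in Step~2 is never used, since the inductive step has $L\ge2$.
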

\begin{proof}
We induct on $L$. If $L=1$, then $y=v$ is a single input and
$(vR^n_{m',k'}z)_{\mathrm p}=0$ by \eqref{eq:one-input-product-piece}.

Let $2\le L\le N-1$ and assume the claim for all smaller values of $L$.
Apply Lemma~\ref{lem:word-residue-split} to $y$, with letters
$s^{(1)},\ldots,s^{(L-1)},v$ and descendant orders
$r_1,\ldots,r_{L-1},1$; these play the roles of $s^{(1)},\ldots,s^{(N)}$
and $r_1,\ldots,r_N$ in \eqref{eq:correction-word}, with $N$ replaced by
$L$. The suffixes of $y$, playing the role of the $w^{(q)}$ in
\eqref{eq:correction-word}, are
\[
y^{(L)}:=v,\qquad
y^{(q)}:=s^{(q)}_{-r_q}\cdots s^{(L-1)}_{-r_{L-1}}v\quad(1\le q\le L-1),
\]
so \eqref{eq:word-residue-split} for $y$ reads
\begin{equation}\label{eq:contracted-split}
(yR^n_{m',k'}z)_{\mathrm p}
=yR^n_{m',k'}z
-\sum_{t=0}^{L-2}(yR^n_{m',k'}z)_{\mathrm m_t}
-e,\qquad
e\in U_y,
\end{equation}
where
\[
U_y:=\sum_{q=1}^{L-1}O_n^\circ(s^{(q)},V)+O_n^\circ(v,V)
+\sum_{t=0}^{L-2}\sum_{j\ge0}O_n^\circ\bigl(s^{(t+1)}_jy^{(t+2)},V\bigr)
\]
is the subspace \eqref{eq:word-split-modulus} for $y$. Its last summand,
for a given $t$, consists of the slices of the singular inputs
$s^{(t+1)}_jy^{(t+2)}$ of the correction pieces
$(yR^n_{m',k'}z)_{\mathrm m_t,j}$ in \eqref{eq:word-correction-piece}. We check that each term on the
right of \eqref{eq:contracted-split} is letter-tracked in the sense of
Definition~\ref{def:letter-tracked-residues}.

\emph{The full residue.} As a displayed vacuum word,
$y=s^{(1)}_{-r_1}\cdots s^{(L-1)}_{-r_{L-1}}s^{(L)}_{j_L}\cdots
s^{(N-1)}_{j_{N-1}}s^{(N)}_{-r_N}\mathbf1$ has exactly the letters of
$w$, the nonnegative mode $j_L$ at position $L\le N-1$, and last mode
$-r_N$. Hence $yR^n_{m',k'}z$ has type~\textup{(T2)}.

\emph{The subspace $U_y$.} The slice $O_n^\circ(s^{(q)},V)$ is spanned by
residues of the one-letter word $s^{(q)}_{-1}\mathbf1$, of
type~\textup{(T1)} since $N\ge2$. The word $v$ has the letters
$s^{(L)},\ldots,s^{(N)}$, a proper sub-multiset because $L\ge2$, so
$O_n^\circ(v,V)$ is of type~\textup{(T1)}. The singular input
$s^{(t+1)}_jy^{(t+2)}$ of the correction piece $\mathrm m_t$ of $y$ has
the letters $s^{(t+1)},\ldots,s^{(N)}$ and last mode $-r_N$. For
$t\ge1$ these form a proper sub-multiset of the letters of $w$, so its
slice has type~\textup{(T1)}. For $t=0$, the input of the first
correction of $y$, the word $s^{(1)}_jy^{(2)}$ has all the letters of
$w$ and the nonnegative first mode $j$ before its last letter, so its
slice has type~\textup{(T2)}.

\emph{The correction pieces.} Fix $0\le t\le L-2$ and $j\ge0$. For
$t=0$ the piece is zero by
Corollary~\ref{prop:Rcomm-strip-vanish-main}\textup{(ii)}, applied to
$y$; the following argument covers all $t$. By
Proposition~\ref{prop:middle-operator-convolution}, applied to $y$,
$(yR^n_{m',k'}z)_{\mathrm m_t,j}$ is a finite linear combination of
product pieces $(y'R^n_{m'-\ell,0}z)_{\mathrm p}$ with $m'-\ell\ge0$,
where
\[
y'=s^{(1)}_{-r_1}\cdots s^{(t)}_{-r_t}\,v',\qquad
v'=s^{(t+1)}_{j}\,s^{(t+2)}_{-r_{t+2}}\cdots
s^{(L-1)}_{-r_{L-1}}\,v
\]
is the contraction of $y$ at position $t+1$ (for $t=L-2$,
$v'=s^{(L-1)}_jv$). The word $v'$ has the form
\eqref{eq:contracted-suffix} with $L$ replaced by $t+1\le L-1$: its
letters are $s^{(t+1)},\ldots,s^{(N)}$ in order, its first mode $j$ is
nonnegative, and its last mode is $-r_N$. By the induction hypothesis
each $(y'R^n_{m'-\ell,0}z)_{\mathrm p}$ is letter-tracked, and summing
over the finitely many $j$ with $s^{(t+1)}_jy^{(t+2)}\ne0$ shows that
$(yR^n_{m',k'}z)_{\mathrm m_t}$ is letter-tracked.

Therefore every term on the right of \eqref{eq:contracted-split} is
letter-tracked, and so is $(yR^n_{m',k'}z)_{\mathrm p}$.
\end{proof}

We now prove the correction-residue conjecture of Part~I,
\cite[Conjecture~4.6]{Part1}, in the following stronger form: the
set $S$ need not strongly generate $V$, and the errors are
letter-tracked.

\begin{thm}[Correction residues in the plus subspace]
\label{thm:correction-residues-plus}
\label{conj:finite-level-revised}\label{conj:correction-residues-plus}
Let $n\in\N$, let $S\subset V$ be a finite homogeneous set, and
let $w$ be the regular word \eqref{eq:correction-word} of length
$N\ge2$ with letters in $S$.  For $m\ge k\ge0$, $z\in V$, and
$0\le t\le N-2$, the full-field correction residue satisfies
\begin{equation}\label{eq:conj-termwise-correction-mod-plus}
\begin{aligned}
&\Res_x\frac{(1+x)^{\wt w+n+k}}{x^{m+2n+2}}
\sum_{i\ge0}F^+_{i,r_{t+1},s^{(t+1)}}(x,x)
Y(s^{(t+1)}_iw^{(t+2)},x)\\
&\qquad\cdot
Y_L(s^{(t)}_{-r_t}\mathbf1,x)\cdots
Y_L(s^{(1)}_{-r_1}\mathbf1,x)z
\in O_n^\circ(N,S,V)^+.
\end{aligned}
\end{equation}
For $t=0$ the trailing product is absent.  The assertion holds
separately for every fixed singular index $i$, and also with
$Y_L$ in place of the leftmost $Y$.

More precisely, each such residue and the difference
\begin{equation}\label{eq:tracked-product-error}
wR^n_{m,k}z-(wR^n_{m,k}z)_{\mathrm p}
\end{equation}
are letter-tracked relative to $w$
(Definition~\ref{def:letter-tracked-residues}).  Consequently,
\begin{equation}\label{eq:word-product-mod-plus}
wR^n_{m,k}z\equiv_{O_n^\circ(N,S,V)^+}
(wR^n_{m,k}z)_{\mathrm p}.
\end{equation}
In particular \cite[Conjecture~4.6]{Part1} holds: it is
\eqref{eq:conj-termwise-correction-mod-plus} under the additional
assumption that $S$ strongly generates $V$.
\end{thm}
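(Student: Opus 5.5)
The proof assembles the results of this section; no new computation is needed. Fix $t$ with $0\le t\le N-2$ and a singular index $i\ge0$, and put $v=s^{(t+1)}_iw^{(t+2)}$. We first show that the $Y_L$ correction piece $(wR^n_{m,k}z)_{\mathrm m_t,i}$ is letter-tracked. By Proposition~\ref{prop:middle-operator-convolution}, this piece is a finite linear combination of product pieces $(yR^n_{m-\ell,0}z)_{\mathrm p}$ with $m-\ell\ge0$, where $y$ is the contraction \eqref{eq:contracted-word}. Here $v$ has the form \eqref{eq:contracted-suffix} with $L=t+1\le N-1$: its letters are $s^{(t+1)},\dots,s^{(N)}$ in order, its first mode $i$ is nonnegative, the intermediate modes are $-r_q$, and the last mode is $-r_N$. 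Lemma~\ref{lem:contracted-products-tracked} therefore applies and shows that each $(yR^n_{m-\ell,0}z)_{\mathrm p}$ is letter-tracked. For $t=0$ the piece is already zero by Corollary~\ref{prop:Rcomm-strip-vanish-main}\textup{(ii)}. Summing over the finitely many $i$ with $v\ne0$ handles $(wR^n_{m,k}z)_{\mathrm m_t}$. This proves the $Y_L$ version of the statement.

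Next we pass to the full field $Y$ in the leftmost position. By Corollary~\ref{prop:Rcomm-strip-vanish-main}\textup{(i)}, the full-field residue differs from the $Y_L$ piece by an element of $O_n^\circ(v,V)$. It remains to check that every residue $vR^n_{m',k'}z'$ is letter-tracked. Read as a displayed vacuum word, $v$ has last mode $-r_N$ and a nonnegative mode $i$ at its first position.
\begin{itemize}
\item If $t\ge1$, the letters of $v$ form a proper sub-multiset of those of $w$, so the residue is of type~\textup{(T1)}.
\item If $t=0$, then $v=s^{(1)}_iw^{(2)}$ has all the letters of $w$ and the nonnegative mode $i$ before its last letter (here $N\ge2$ is used). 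The residue is therefore of type~\textup{(T2)}.
\end{itemize}
Hence \eqref{eq:conj-termwise-correction-mod-plus} holds for each fixed $i$, and after summation over $i$. Membership in $O_n^\circ(N,S,V)^+$ follows from the final sentence of Definition~\ref{def:letter-tracked-residues}.

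For the difference \eqref{eq:tracked-product-error}, we use Lemma~\ref{lem:word-residue-split}. It writes $wR^n_{m,k}z-(wR^n_{m,k}z)_{\mathrm p}$ as the sum of the corrections $(wR^n_{m,k}z)_{\mathrm m_t}$, which were shown above to be letter-tracked, plus an element of $U_w$. It then suffices to show that $U_w$ is spanned by letter-tracked residues, by the same bookkeeping used in the proof of Lemma~\ref{lem:contracted-products-tracked}.
\begin{itemize}
\item Each $O_n^\circ(s^{(q)},V)$ comes from the one-letter word $s^{(q)}_{-1}\mathbf1$, which is of type~\textup{(T1)} because $N\ge2$.
\item Each $O_n^\circ(s^{(t+1)}_jw^{(t+2)},V)$ is of type~\textup{(T1)} for $t\ge1$.
\item For $t=0$ it is of type~\textup{(T2)}.
\end{itemize}
This yields \eqref{eq:word-product-mod-plus}. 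Conjecture~4.6 of Part~I is then the special case of \eqref{eq:conj-termwise-correction-mod-plus} in which $S$ is additionally assumed to strongly generate $V$.

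The real difficulty was the support bound of Lemma~\ref{lem:G-support} together with the induction in Lemma~\ref{lem:contracted-products-tracked}, and both are already available. What remains here is bookkeeping, and the one point that needs care is the presentation of words. Contracted or singular words may evaluate to zero, or may contain modes of arbitrary sign. One must confirm that each of them is still a displayed word in $\mathcal P_{\le N}(S)$ with negative last mode $-r_N$, so that Definition~\ref{def:OnN-SV} applies to it without change.
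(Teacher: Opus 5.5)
Your proposal is correct and follows the paper's proof essentially step for step. It uses Proposition~\ref{prop:middle-operator-convolution} with Lemma~\ref{lem:contracted-products-tracked} for the $Y_L$ pieces, Corollary~\ref{prop:Rcomm-strip-vanish-main}\textup{(i)} with the (T1)/(T2) split on $t$ for the full field, and Lemma~\ref{lem:word-residue-split} with the same classification of the summands of $U_w$ for the difference \eqref{eq:tracked-product-error}.
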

\begin{proof}
Fix $t$ and $i$, put $v=s^{(t+1)}_iw^{(t+2)}$, and let $y$ be the
contracted word \eqref{eq:contracted-word}.

\emph{The $Y_L$ correction pieces.} For $t=0$ these are zero by
Corollary~\ref{prop:Rcomm-strip-vanish-main}\textup{(ii)}. For every
$t$, by
Proposition~\ref{prop:middle-operator-convolution},
$(wR^n_{m,k}z)_{\mathrm m_t,i}$ is a finite linear combination of the
product pieces $(yR^n_{m-\ell,0}z)_{\mathrm p}$, $m-\ell\ge0$. The last
input $v$ of $y$ has the form \eqref{eq:contracted-suffix} with
$L=t+1\le N-1$, $j_{t+1}=i$, and $j_q=-r_q$ for $q>t+1$. By
Lemma~\ref{lem:contracted-products-tracked} these product pieces are
letter-tracked.

\emph{The full field.} By
Corollary~\ref{prop:Rcomm-strip-vanish-main}\textup{(i)}, replacing
$Y_L(v,x)$ by $Y(v,x)$ changes the residue by an element of
$O_n^\circ(v,V)$. The word $v$ has the letters
$s^{(t+1)},\ldots,s^{(N)}$ and last mode $-r_N$. For $t\ge1$ this is a
proper sub-multiset of the letters of $w$, so $O_n^\circ(v,V)$ has
type~\textup{(T1)}. For $t=0$ it has all the letters of $w$ and the
nonnegative first mode $i$ before its last letter (since $N\ge2$), so it
has type~\textup{(T2)}. Summing over the finitely many $i$ with $v\ne0$
proves \eqref{eq:conj-termwise-correction-mod-plus}, in the
letter-tracked form.

\emph{The difference \eqref{eq:tracked-product-error}.} The summands of
the subspace $U_w$ in \eqref{eq:word-split-modulus} are letter-tracked:
$O_n^\circ(s^{(q)},V)$ has type~\textup{(T1)}, and the slice of the
singular input $s^{(t+1)}_iw^{(t+2)}$ of $\mathrm m_t$ has
type~\textup{(T1)} for $t\ge1$ and type~\textup{(T2)} for $t=0$, as in
the previous paragraph.
By Lemma~\ref{lem:word-residue-split},
$wR^n_{m,k}z-(wR^n_{m,k}z)_{\mathrm p}$ is the sum of an element of
$U_w$ and the correction pieces $(wR^n_{m,k}z)_{\mathrm m_t}$, all of
which are letter-tracked. Every letter-tracked vector lies in
$O_n^\circ(N,S,V)^+$ (Definition~\ref{def:letter-tracked-residues}), which gives
\eqref{eq:word-product-mod-plus}.
\end{proof}

At level zero the pieces vanish outright.

\begin{lem}[Level zero]
\label{rem:level-zero-pieces}
Let $n=0$, $m\ge k\ge0$, and $z\in V$, and let $w$ be the word
\eqref{eq:correction-word} in arbitrary homogeneous letters. Then
\begin{equation}\label{eq:level-zero-pieces}
(wR^0_{m,k}z)_{\mathrm p}=0,
\qquad
(wR^0_{m,k}z)_{\mathrm m_t,i}=0
\quad(0\le t\le N-2,\ i\ge0).
\end{equation}
Consequently $wR^0_{m,k}z\in U_w$, where $U_w$ is the subspace
\eqref{eq:word-split-modulus}.
\end{lem}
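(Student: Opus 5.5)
The plan is to prove the vanishing of the product piece directly from the degree bound on $\Phi$. The vanishing of the correction pieces should then follow from the shortening identity, and the final assertion from the residue split.

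\emph{Product piece.} Set $n=0$. By \eqref{eq:descendant-derivative-convention} and \eqref{eq:p-from-Phi-r}, each derivative field is
\[
Y_L(s^{(q)}_{-r_q}\mathbf1,x)=\sum_{j\ge-1}s^{(q)}_j\,(1+x)^{-(\wt s^{(q)}+r_q-1)}\Phi_{j,r_q,s^{(q)}}(x).
\]
Multiply the $N$ fields together. The powers of $1+x$ multiply to $(1+x)^{-\wt w}$, since at $n=0$ the assigned weight \eqref{eq:correction-word} is exactly $\sum_q(\wt s^{(q)}+r_q-1)$. The factor $(1+x)^{\wt w+k}$ in \eqref{eq:word-product-piece} therefore cancels this to $(1+x)^k$. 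What remains is
\[
(1+x)^k\sum s^{(N)}_{j_N}\cdots s^{(1)}_{j_1}z\;\prod_q\Phi_{j_q,r_q,s^{(q)}}(x),
\]
which is a finite sum after acting on homogeneous $z$, by the finiteness remark following Lemma~\ref{lem:word-residue-split}. Corollary~\ref{cor:Phi-Laurent-and-shifted-poly} with $n=0$ says that each $\Phi_{j,r,u}$ has degrees $\le 2n=0$. Hence each product of $\Phi$'s has degrees $\le0$, and after multiplying by $(1+x)^k$ the degrees are $\le k$. The residue against $x^{-m-2}$ extracts the coefficient of $x^{m+1}$. Since $m+1>k$, this coefficient is zero, so $(wR^0_{m,k}z)_{\mathrm p}=0$. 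The argument uses only homogeneity of the inputs, so it applies to any displayed word, including one whose last input is a singular vector $v$ of descendant order $1$.

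\emph{Correction pieces.} By Proposition~\ref{prop:middle-operator-convolution}, each $(wR^0_{m,k}z)_{\mathrm m_t,i}$ is a finite combination of product pieces $(yR^0_{m-\ell,0}z)_{\mathrm p}$ with $m-\ell\ge0$. Here $y$ is the contracted word \eqref{eq:contracted-word}, with inputs $s^{(1)},\dots,s^{(t)},v$, descendant orders $r_1,\dots,r_t,1$, and weight $\wt y$ as in \eqref{eq:contracted-weight}. By the previous paragraph applied to $y$ (for $t=0$, by \eqref{eq:one-input-product-piece}), each of these product pieces vanishes. Hence every correction piece vanishes.

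\emph{Conclusion.} Lemma~\ref{lem:word-residue-split} then gives $wR^0_{m,k}z\equiv_{U_w}0$, that is, $wR^0_{m,k}z\in U_w$.

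The only delicate point is the bookkeeping of weights. I must confirm that the weight used in the kernel is the displayed sum, so that the $(1+x)$ powers cancel exactly even when the word evaluates to zero. I must also confirm that the $\Phi$ degree bound holds for all $j\ge-2n-1=-1$, including the nonnegative modes. Corollary~\ref{cor:Phi-Laurent-and-shifted-poly} supplies this bound for every $j\ge-2n-1$.
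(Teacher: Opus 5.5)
Your proposal is correct and follows the paper's proof essentially step for step. The powers of $1+x$ cancel to $(1+x)^k$ through the displayed weight $\wt w$, the bound $\deg\le 2n=0$ from Corollary~\ref{cor:Phi-Laurent-and-shifted-poly} forces the coefficient of $x^{m+1}$ to vanish, Proposition~\ref{prop:middle-operator-convolution} applied to the contracted word disposes of the correction pieces, and Lemma~\ref{lem:word-residue-split} gives membership in $U_w$.
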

\begin{proof}
At $n=0$ the cutoff is $-2n-1=-1$. By
\eqref{eq:descendant-derivative-convention} and \eqref{eq:p-from-Phi-r},
\[
Y_L(s^{(q)}_{-r_q}\mathbf1,x)
=\sum_{a\ge-1}s^{(q)}_a\,
(1+x)^{-(\wt s^{(q)}+r_q-1)}\Phi_{a,r_q,s^{(q)}}(x).
\]
Substituting into \eqref{eq:word-product-piece} and using
$\wt w=\sum_{q=1}^N(\wt s^{(q)}+r_q-1)$ from \eqref{eq:correction-word},
all powers of $1+x$ except $(1+x)^k$ cancel, and
\[
(wR^0_{m,k}z)_{\mathrm p}
=\sum_{a_1,\ldots,a_N\ge-1}
[x^{m+1}]\Bigl((1+x)^k\prod_{q=1}^N\Phi_{a_q,r_q,s^{(q)}}(x)\Bigr)\,
s^{(N)}_{a_N}\cdots s^{(1)}_{a_1}z,
\]
a finite sum. By Corollary~\ref{cor:Phi-Laurent-and-shifted-poly} with
$n=0$, each $\Phi_{a_q,r_q,s^{(q)}}(x)$ has no powers of $x$ above
$x^{0}$, so the product inside the brackets has no powers above $x^k$.
Since $m+1>k$, every coefficient is zero. This proves the first
equation, for every word in homogeneous letters. By
Proposition~\ref{prop:middle-operator-convolution}, each
$(wR^0_{m,k}z)_{\mathrm m_t,i}$ is a linear combination of product
pieces $(yR^0_{m-\ell,0}z)_{\mathrm p}$ of the contracted word $y$,
which vanish by the first equation applied to $y$. The last statement
follows from \eqref{eq:word-residue-split} in
Lemma~\ref{lem:word-residue-split}, since all pieces on its right side
vanish.
\end{proof}

\begin{cor}[Individual pieces are circle relations]
\label{cor:pieces-circle}
For every $n\in\N$, every individual correction piece
$(wR^n_{m,k}z)_{\mathrm m_t,i}$ and every product piece
$(wR^n_{m,k}z)_{\mathrm p}$ belongs to $O_n^\circ(V)$.
\end{cor}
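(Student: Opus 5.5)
The plan is to reduce both assertions to the fact that a replacement residue of a vector lies in $O_n^\circ(V)$ (\cite[Proposition~4.3]{Part1}), together with the shortening identity. I treat the product piece first and then transfer the result to the correction pieces. The order matters, because each correction piece is a combination of product pieces of a contracted word with fewer $Y_L$ factors. This suggests an induction on the number of ordered inputs, valid for words in arbitrary homogeneous letters; the letters need not lie in a set $S$.

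\emph{Product pieces.} Induct on the number $N$ of ordered inputs of the displayed word $w$, whose inputs are arbitrary homogeneous vectors.

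For $N=1$ the product piece is zero by \eqref{eq:one-input-product-piece}.

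For $N\ge2$, Lemma~\ref{lem:word-residue-split} gives
\[
(wR^n_{m,k}z)_{\mathrm p}=wR^n_{m,k}z-\sum_t(wR^n_{m,k}z)_{\mathrm m_t}-e,\qquad e\in U_w.
\]
The three kinds of terms are handled as follows.
\begin{itemize}
\item The term $wR^n_{m,k}z$ lies in $O_n^\circ(w,V)\subseteq O_n^\circ(V)$ by \cite[Lemma~2.3]{Part1}. If the displayed word evaluates to $0$, this term is zero.
\item The element $e$ lies in $U_w$, which is a sum of slices $O_n^\circ(\cdot,V)\subseteq O_n^\circ(V)$.
\item By Proposition~\ref{prop:middle-operator-convolution}, each correction piece $(wR^n_{m,k}z)_{\mathrm m_t,i}$ is a finite combination of product pieces $(yR^n_{m-\ell,0}z)_{\mathrm p}$ of a contracted word $y$. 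This word has $t+1\le N-1$ homogeneous ordered inputs, so these pieces lie in $O_n^\circ(V)$ by the induction hypothesis.
\end{itemize}
Hence $(wR^n_{m,k}z)_{\mathrm p}\in O_n^\circ(V)$.

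\emph{Correction pieces.} The same application of Proposition~\ref{prop:middle-operator-convolution} writes each individual correction piece as a finite combination of product pieces of words with fewer inputs. Each of these lies in $O_n^\circ(V)$ by the product-piece statement just proved.

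\emph{The step needing care.} The induction must be carried out over words in arbitrary homogeneous inputs rather than letters of $S$. Otherwise the contracted input $v=s^{(t+1)}_iw^{(t+2)}$ would fall outside the class of words considered. This is legitimate because Lemma~\ref{lem:word-residue-split} and Proposition~\ref{prop:middle-operator-convolution} use only the homogeneity of the ordered inputs, as the paper remarks. Two further points need checking:
\begin{itemize}
\item $v$ is homogeneous when $w^{(t+2)}$ is, and $v$ has descendant order $1$, so the one-step framework of \eqref{eq:one-sided-iterate} applies with $r=1$ for that input.
\item The singular slices in $U_w$ are slices of homogeneous vectors, hence contained in $O_n^\circ(V)$.
\end{itemize}

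An alternative route for the letter case is available. Take $S$ to be the finite set of letters of $w$; then Theorem~\ref{thm:correction-residues-plus} and Lemma~\ref{lem:contracted-products-tracked} directly place every piece in $O_n^\circ(N,S,V)^+\subseteq O_n^\circ(V)$. Since the corollary is stated for the word \eqref{eq:correction-word}, whose letters are homogeneous, that choice of $S$ already suffices.
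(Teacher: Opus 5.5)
Your main induction is correct and is essentially the paper's proof: you reduce correction pieces to product pieces of contracted words via Proposition~\ref{prop:middle-operator-convolution}, then induct on the number of ordered inputs using the residue split, with the full residue and $U_w$ in $O_n^\circ(V)$ and the base case \eqref{eq:one-input-product-piece}. One correction to your closing alternative route: Theorem~\ref{thm:correction-residues-plus} places the correction pieces and the difference $wR^n_{m,k}z-(wR^n_{m,k}z)_{\mathrm p}$ in $O_n^\circ(N,S,V)^+$, but it does not place the product piece itself there. In general the product piece lies only in $O_n^\circ(N,S,V)$; otherwise $\mathcal Q_1((h,h))$ would vanish for the Heisenberg algebra. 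That weaker membership still suffices for the corollary.
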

\begin{proof}
By Proposition~\ref{prop:middle-operator-convolution}, a correction piece is a
linear combination of product pieces of contracted words, so it suffices to
treat product pieces. For a product piece, the full residue
$wR^n_{m,k}z$ and the subspace $U_w$ of \eqref{eq:word-split-modulus} lie in $O_n^\circ(V)$, so
\eqref{eq:word-residue-split} expresses $(wR^n_{m,k}z)_{\mathrm p}$ as an
element of $O_n^\circ(V)$ minus correction pieces. Induct on the number
$N$ of ordered inputs: each correction piece of $w$ is a combination of product
pieces of words with at most $N-1$ inputs, and for $N=1$ the product piece is
zero by \eqref{eq:one-input-product-piece}.
\end{proof}

\section{Quadratic coefficient bases, spanning, and diagonal elimination}
\label{sec:quadratic-reduction}

\begin{rem}[The quadratic pair]\label{rem:bridge-true-regular-quadratic}
	\label{subsec:quadratic-split}
	\label{lem:prod-comm-split}\label{def:prod-comm-split-mk}
	For $N=2$ and $r_2=1$ write $u=s^{(1)}$, $v=s^{(2)}$, and $r=r_1$, so
	$w=u_{-r}v$ and $\wt w=\wt u+\wt v+r-1$; here $v$ may be any homogeneous
	vector. The subspace \eqref{eq:word-split-modulus} is then the
	subspace attached to the pair
	\begin{equation}\label{eq:U-def}
	U\;:=\;O_n^\circ(u,V)+O_n^\circ(v,V)+\sum_{i\ge0}O_n^\circ(u_i v,V)
	\ \subseteq\ O_n^\circ(V),
	\end{equation}
	and the pieces of Definition~\ref{def:word-pieces} are the
	\emph{product contribution} and the first correction, which for a pair we
	also call the \emph{commutator contribution}:
	\begin{align}
	\Bigl((u_{-r}v)R^n_{m,k}z\Bigr)_{\mathrm{p}}
	={}&\Res_x\frac{(1+x)^{\wt u+\wt v+r-1+n+k}}{x^{m+2n+2}}\,
	Y_L(v,x)\,Y_L(u_{-r}\mathbf 1,x)\,z,
	\label{eq:prod-piece-def-mk}\\[4pt]
	\Bigl((u_{-r}v)R^n_{m,k}z\Bigr)_{\mathrm{m}_0}
	={}&\Res_x\frac{(1+x)^{\wt u+\wt v+r-1+n+k}}{x^{m+2n+2}}
	\sum_{i\ge0} F_{i,r,u}^{+}(x,x)\,Y_L(u_i v,x)\,z.
	\label{eq:comm-piece-def-mk}
	\end{align}
	For $N=2$ the only correction piece is the first one ($t=0$). Lemma~\ref{lem:word-residue-split}
	and Corollary~\ref{prop:Rcomm-strip-vanish-main}\textup{(ii)} give, for all
	homogeneous $u,v$, $z\in V$, $r\ge1$, and $m\ge k\ge0$,
	\begin{equation}\label{eq:prod-comm-split-congruence-mk}
	(u_{-r}v)\,R^n_{m,k}z
	\;\equiv_{\,U\,}\;
	\Bigl((u_{-r}v)R^n_{m,k}z\Bigr)_{\mathrm{p}}
	+\Bigl((u_{-r}v)R^n_{m,k}z\Bigr)_{\mathrm{m}_0},
	\qquad
	\Bigl((u_{-r}v)R^n_{m,k}z\Bigr)_{\mathrm{m}_0}=0,
	\end{equation}
	and therefore
	\begin{equation}\label{eq:drop-comm-strip}
	(u_{-r}v)\,R^n_{m,k}z
	\;\equiv_{\,U\,}\;
	\Bigl((u_{-r}v)R^n_{m,k}z\Bigr)_{\mathrm{p}}
	\qquad(m\ge k\ge 0).
	\end{equation}
	By \cite[Lemma~4.16]{Part1}, after imposing the
	length-one relations and the singular quadratic iterates, the remaining
	length-two relations for a pair $\{u,v\}$ of homogeneous vectors are
	represented by the regular quadratic residues of the three ordered pair types
	\[
	u_{-r}u,\qquad u_{-r}v,\qquad v_{-r}v
	\qquad (r\ge1).
	\]
	This section studies one ordered pair $(u,v)$ at a
	time, modulo $U$; the equal-input case is the specialization $u=v$.
	All congruences $A\equiv_U B$ below are inclusions $A-B\in U$ in $V$
	(Definition~\ref{def:W-congruence}), and equalities are equalities in $V$.
\end{rem}

In particular, by Remark~\ref{rem:bridge-true-regular-quadratic} and the binomial
reduction \eqref{eq:Rmk-binomial-reduction}, the remaining quadratic problem for
$O_n^\circ(2,S,V)$ reduces to analyzing the product residues in the target case $k=0$:
\[
(u_{-r}v)\,R^n_{m,0}z
\;\equiv_{\,U\,}\;
\Bigl((u_{-r}v)R^n_{m,0}z\Bigr)_{\mathrm{p}}
\qquad(r\ge 1,\ m\ge 0,\ z\in V).
\]
The goal of this section is to compute and reduce the product contribution
$\bigl((u_{-r}v)R^n_{m,k}z\bigr)_{\mathrm{p}}$ using only modes with indices at least $-2n-1$, and then show that only
finitely many such product relations are needed (for fixed $n$) to generate the quadratic part.
The following specialization of \cite[Lemma~4.12]{Part1} performs the last-mode reduction; it is also used in the equal-input calculation of Section~\ref{subsec:equal-input-specialization}.

\begin{lem}[Expanding a two-iterate vacuum word against $R^n_{m,k}$]
\label{lem:two-iterate-word-Rmk-in-generators}
	Let $u,v\in V$ be homogeneous and let $z\in V$.  Fix $n\in\N$,
	$m\ge k\ge0$, $q\in\Z$, and $r\ge1$.  Then
	\begin{equation}\label{eq:two-iterate-word-Rmk-in-generators}
	\begin{aligned}
	&(u_qv_{-r}\mathbf1)R^n_{m,k}z\\
	&\quad=
	\sum_{s=1}^{r}\binom{q}{s-1}
	\sum_{h=0}^{r-s}
	\Biggl(
	\sum_{a=0}^{r-s-h}(-1)^a
	\binom{n+k+\wt u+\wt v-q+r-2}{a}\\
	&\qquad\qquad\qquad\qquad\cdot
	\binom{m+2n+r-s+1-a}{r-s-a}
	\binom{r-s-a}{h}
	\Biggr)
	(u_{q-s+1}v)R^n_{m+h,k}z.
	\end{aligned}
	\end{equation}
\end{lem}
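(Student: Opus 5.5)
This is the specialization of \cite[Lemma~4.12]{Part1} to a two-letter word. Since that lemma is available, I would mostly check that its indices specialize to the displayed formula. To make the plan self-contained, I would also rederive it from the Jacobi identity, as follows.

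\emph{Iterate formula.} First write $v_{-r}\mathbf1=\frac{1}{(r-1)!}L(-1)^{r-1}v$, so that $Y(v_{-r}\mathbf1,x)=\frac1{(r-1)!}\partial_x^{r-1}Y(v,x)$. Then use the commutator formula
\[
u_qY(v,x)-Y(v,x)u_q=\sum_{s\ge0}\binom{q}{s}x^{q-s}Y(u_sv,x).
\]
Apply it to $\mathbf1$ and use $u_q\mathbf1$: this vanishes for $q\ge0$, and for $q<0$ it is a creation term handled by the skew-symmetry/iterate convention of Part~I. The result expresses $Y(u_qv_{-r}\mathbf1,x)$ as a finite combination of the derivatives $\partial_x^{r-1}$ of $x^{q-s'}Y(u_{s'}v,x)$. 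Expanding the derivative by Leibniz gives terms $\binom{q}{s-1}x^{q-s+1-(r-s)}\cdot(\text{derivative of }Y(u_{q-s+1}v,x))$, after relabeling so that the mode $q-s+1$ appears. This produces the outer sum over $s$ and the factor $\binom{q}{s-1}$.

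\emph{Reducing to residues.} Put this into \eqref{eq:Rmk-def}. Each derivative term $\partial_x^{r-s}Y(u_{q-s+1}v,x)$ is integrated by parts against the kernel $(1+x)^{\alpha}x^{-m-2n-2-\delta}$, where
\[
\alpha=\wt(u_qv_{-r}\mathbf1)+n+k,\qquad \wt(u_qv_{-r}\mathbf1)=\wt u+\wt v-q+r-2 .
\]
The weight of the new input $u_{q-s+1}v$ is $\wt u+\wt v-q+s-2$. So the kernel carries an excess factor $(1+x)^{r-s}$ relative to the residue kernel for $u_{q-s+1}v$, together with an extra power of $x$.

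Differentiating $(1+x)^{\alpha}x^{-M}$ a total of $r-s$ times produces, by Leibniz, factors $(-1)^a\binom{\alpha}{a}$-type coefficients from the $(1+x)$ part and $\binom{M+r-s-a}{r-s-a}$-type coefficients from the $x$ part. The remaining power $(1+x)^{r-s-a}$ is then expanded binomially as $\sum_h\binom{r-s-a}{h}x^h$. Each $x^h$ shifts $m$ to $m+h$ in $R^n_{m+h,k}$, and $(1+x)^{\wt(u_{q-s+1}v)+n+k}$ is retained. This accounts for the triple binomial in the statement.

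\emph{Main obstacle.} The bookkeeping is the hard part. I would check three things:
\begin{itemize}
\item the upper parameter $n+k+\wt u+\wt v-q+r-2$ together with the sign $(-1)^a$, which arises because the $(1+x)$ powers must be written relative to the shifted weight;
\item the bottom index $m+2n+r-s+1-a$ in the second binomial;
\item the ranges $h\le r-s-a$, $a\le r-s-h$.
\end{itemize}
The last point amounts to interchanging sums over $a$ and $h$. I would verify all of this by specializing $r=1$, where the formula must reduce to $(u_qv)R^n_{m,k}z$ (only $s=1$, $h=a=0$ survive). I would also check $r=2$ by direct computation.

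Finally, I would confirm that each $R^n_{m+h,k}$ is admissible, since $m+h\ge k$. For a precise citation, I would state that \eqref{eq:two-iterate-word-Rmk-in-generators} is \cite[Lemma~4.12]{Part1} with $N=2$, $s^{(1)}=u$, $s^{(2)}=v$, and the last mode $-r$.
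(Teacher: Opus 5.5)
Your primary argument is the paper's proof. You read the statement off as the $N=2$ case of \cite[Lemma~4.12]{Part1} and match indices; the paper does the same, with $u^{(1)}=u$, $u^{(2)}=v$, $q_1=q$, $q_2=r$, followed by the substitutions $s=j+1$, $h=m'-m$.

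Your optional self-contained rederivation, however, contains a wrong step, and would not reach the displayed formula if you relied on it. Applying the commutator formula to $\mathbf1$ produces the generating function $u_qY(v,y)\mathbf1=u_qe^{yL(-1)}v=\sum_{r\ge1}y^{r-1}u_qv_{-r}\mathbf1$. It does not produce the field $Y(u_qv_{-r}\mathbf1,x)$. The powers of $y$ are absorbed when you extract $[y^{r-1}]$. Your Leibniz step carries a factor $x^{q-s+1-(r-s)}=x^{q-r+1}$ that should not be there: already for $r=1$, $q=-1$ it would give $Y(u_{-1}v,x)=x^{-1}Y(u_{-1}v,x)$. Consequently the kernel gets no extra power of $x$. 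Your $\delta$ must be $0$; otherwise both the bottom index and the shift $m\mapsto m+h$ come out wrong.

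A clean route needs no skew-symmetry and no case split on the sign of $q$. From $[L(-1),u_q]=-qu_{q-1}$ one gets $e^{-yL(-1)}u_qe^{yL(-1)}=\sum_{i\ge0}\binom{q}{i}y^iu_{q-i}$. Apply this to $v$ and extract $[y^{r-1}]$:
\[
u_qv_{-r}\mathbf1=\sum_{s=1}^{r}\binom{q}{s-1}\frac{L(-1)^{r-s}}{(r-s)!}\,u_{q-s+1}v .
\]
Hence $Y(u_qv_{-r}\mathbf1,x)=\sum_{s}\binom{q}{s-1}\frac{1}{(r-s)!}\partial_x^{r-s}Y(u_{q-s+1}v,x)$. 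Now integrate by parts against the unchanged kernel $(1+x)^{\alpha}x^{-m-2n-2}$, where $\alpha=n+k+\wt u+\wt v-q+r-2$:
\begin{itemize}
\item the $(1+x)$-factor contributes $\binom{\alpha}{a}(1+x)^{\alpha-a}$, with no sign;
\item the $x$-factor contributes $(-1)^{r-s-a}\binom{m+2n+1+r-s-a}{r-s-a}x^{-m-2n-2-(r-s-a)}$;
\item together with the integration-by-parts sign $(-1)^{r-s}$, this gives $(-1)^a$.
\end{itemize}
So the sign does not come from rewriting relative to the shifted weight. Finally, split $(1+x)^{\alpha-a}=(1+x)^{\wt(u_{q-s+1}v)+n+k}(1+x)^{r-s-a}$ and expand $\bigl((1+x)/x\bigr)^{r-s-a}=\sum_h\binom{r-s-a}{h}x^{-h}$. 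It is $x^{-h}$, not $x^{h}$, that turns $m$ into $m+h$. This reproduces the displayed formula exactly.
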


\begin{proof}
	This is the $N=2$ specialization of \cite[Lemma~4.12]{Part1}, with
	$u^{(1)}=u$, $u^{(2)}=v$, $q_1=q$, and $q_2=r$, followed by the
	changes of variables $s=j+1$ and $h=m'-m$.
\end{proof}

For the rest of this section, congruences are taken modulo the local
subspace $U$ in \eqref{eq:U-def}. The pair space is
$\mathcal Q_n((u,v))$ from \eqref{eq:intro-Q-def}. In the last-mode reduction of
Lemma~\ref{lem:two-iterate-word-Rmk-in-generators}, if $q\ge0$, each surviving term has
$q-s+1\ge0$, so its residue belongs to a singular-iterate slice
already included in $U$. If $q<0$, the surviving inputs are
regular iterates. Consequently the pair space is spanned by
\begin{equation}\label{eq:pair-regular-residue-maps}
z\longmapsto (u_{-r}v)R^n_{m,k}z+U,
\qquad r\ge1,\quad m\ge k\ge0.
\end{equation}
Equality of two such induced maps means that their difference takes
every $z\in V$ into $U$, as in Definition~\ref{def:W-congruence}\textup{(3)}.

\begin{prop}[General $(m,k)$ product coefficient expansion]\label{prop:prod-master-general-mk}
	With the product contribution \eqref{eq:prod-piece-def-mk}, for all $m\ge k\ge0$, $r\ge 1$,
	homogeneous $u,v\in V$, and $z\in V$, there exist coefficients
	\[
	C^{(n)}_{m,k}(r;a,b)\in\C
	\qquad(a\ge -2n-1,\ b\ge -2n-1)
	\]
	such that the \emph{product contribution} admits the mode expansion
	\begin{equation}\label{eq:prod-C-expansion-mk}
	\Bigl((u_{-r}v)R^n_{m,k}z\Bigr)_{\mathrm{p}}
	=
	\sum_{a\ge -2n-1}\ \sum_{b\ge -2n-1}
	C^{(n)}_{m,k}(r;a,b)\;v_a\,u_b\,z.
	\end{equation}
	Moreover the coefficients are given by the $\Phi$--coefficient extraction formula
	\begin{equation}\label{eq:Gamma-discrete2}
	C^{(n)}_{m,k}(r;a,b)
	=
	\bigl[x^{m+2n+1}\bigr]\Bigl((1+x)^{k-n}\,\Phi_{a,1,v}(x)\,\Phi_{b,r,u}(x)\Bigr).
	\end{equation}
	By Corollary~\ref{cor:Phi-Laurent-and-shifted-poly}, $\Phi_{a,1,v}(x)$ and
	$\Phi_{b,r,u}(x)$ are Laurent polynomials, so each coefficient is a finite sum.
\end{prop}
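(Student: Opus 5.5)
The plan is to substitute the $\Phi$-form of both truncated fields into the product piece \eqref{eq:prod-piece-def-mk} and cancel the powers of $1+x$. This is the same computation as in the proof of Lemma~\ref{rem:level-zero-pieces}, but without setting $n=0$.

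First, rewrite the two fields. By \eqref{eq:YL-pform-def}, \eqref{eq:descendant-derivative-convention} and \eqref{eq:p-from-Phi-r},
\[
Y_L(v,x)=\sum_{a\ge-2n-1}v_a\,(1+x)^{-(\wt v+n)}\Phi_{a,1,v}(x),
\qquad
Y_L(u_{-r}\mathbf1,x)=\sum_{b\ge-2n-1}u_b\,(1+x)^{-(\wt u+n+r-1)}\Phi_{b,r,u}(x).
\]
Inserting both into \eqref{eq:prod-piece-def-mk}, the scalar factors combine to
\[
(1+x)^{\wt u+\wt v+r-1+n+k}\,(1+x)^{-(\wt v+n)}(1+x)^{-(\wt u+n+r-1)}=(1+x)^{k-n}.
\]
Formally, this gives
\[
\Res_x\,x^{-m-2n-2}\sum_{a,b}(1+x)^{k-n}\,\Phi_{a,1,v}(x)\,\Phi_{b,r,u}(x)\;v_a u_b z .
\]
Taking $\Res_x x^{-m-2n-2}(\cdot)$ of each summand is the coefficient extraction $[x^{m+2n+1}]$. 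This yields \eqref{eq:Gamma-discrete2} as the coefficient of $v_au_bz$.

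The main point needing care is justifying the interchange of the residue with the double sum over modes. I would fix a homogeneous $z$; general $z$ follows by linearity. The remark after Lemma~\ref{lem:word-residue-split} shows that $u_bz=0$ for $b$ large, and then $v_au_bz=0$ for $a$ large. Since $a,b\ge-2n-1$, only finitely many pairs $(a,b)$ have $v_au_bz\ne0$. The double sum acting on $z$ is therefore a finite sum of vectors times scalar series, and the residue passes through it term by term.

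Each scalar factor is a finite Laurent polynomial by Corollary~\ref{cor:Phi-Laurent-and-shifted-poly}. The factor $(1+x)^{k-n}$ is expanded in nonnegative powers of $x$ by \eqref{eq:binomial-expansion}. Hence each product lies in $\C((x))$, and each coefficient $C^{(n)}_{m,k}(r;a,b)$ is a finite sum, as claimed. This also makes the cancellation of the powers of $1+x$ legitimate inside $\C((x))$, because all the series involved are expanded in the same direction.

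The coefficients clearly do not depend on $z$. They depend on $u,v$ only through $\wt u$ and $\wt v$, which is consistent with the statement.
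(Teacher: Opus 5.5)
Your proposal is correct and follows essentially the same route as the paper: expand both truncated fields in modes, observe that on a fixed $z$ only finitely many $v_a u_b z$ are nonzero so the residue can be taken termwise, and use \eqref{eq:p-from-Phi-r} to cancel the weight-dependent powers of $1+x$ down to $(1+x)^{k-n}$, giving the extraction $[x^{m+2n+1}]$. The only cosmetic difference is the order of steps: the paper first writes the coefficient as a residue involving $p_{a,1,v}(x)\,p_{b,r,u}(x)$ and converts to $\Phi$ afterwards, whereas you substitute the $\Phi$-form before taking the residue.
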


\begin{proof}
	By \eqref{eq:YL-pform-def} and \eqref{eq:descendant-derivative-convention},
	the two truncated fields in \eqref{eq:prod-piece-def-mk} are
	\[
	Y_L(v,x)=\sum_{a\ge -2n-1} p_{a,1,v}(x)\,v_a,
	\qquad
	Y_L(u_{-r}\mathbf1,x)=\sum_{b\ge -2n-1} p_{b,r,u}(x)\,u_b.
	\]
	Applied to $z$, both sums are finite, so the residue may be taken termwise; this gives
	\eqref{eq:prod-C-expansion-mk} with
	\[
	C^{(n)}_{m,k}(r;a,b)
	=
	\Res_x\;
	\frac{(1+x)^{\wt u+\wt v+r-1+n+k}}{x^{m+2n+2}}\;
	p_{a,1,v}(x)\,p_{b,r,u}(x).
	\]
	By \eqref{eq:p-from-Phi-r},
	$p_{a,1,v}(x)=(1+x)^{-(\wt v+n)}\Phi_{a,1,v}(x)$ and
	$p_{b,r,u}(x)=(1+x)^{-(\wt u+n+r-1)}\Phi_{b,r,u}(x)$. The weights cancel
	from the power of $1+x$, which becomes $(1+x)^{k-n}$, and this is
	\eqref{eq:Gamma-discrete2}.
\end{proof}

\subsection{The universal coefficient space}
\label{subsec:formal-independence}

Throughout the quadratic coefficient calculations, fix an integer
\(n\ge1\) and homogeneous \(u,v\in V\).  All binomial coefficients use the convention in
\eqref{eq:how-to-expand}.  Powers of \(1+x\) are expanded at \(x=0\).
The retained modes have indices at least $-2n-1$, as in \eqref{eq:YL-pform-def}.
The coefficient arguments below use only Definition~\ref{def:Phi-jru} and
\eqref{eq:Gamma-discrete2}.

\begin{defn}[Universal regular--regular coefficient space]
	\label{def:universal-prod-reg-space}
	Define
	\[
	\mathcal F_n(u,v):=
	\bigoplus_{a=-2n-1}^{-1}\bigoplus_{b=-2n-1}^{-1}\C[v_a u_b].
	\]
	Here the symbols \([v_a u_b]\) are formal basis vectors.  For \(m\ge k\ge0\), \(r\ge1\), set
	\begin{equation}\label{eq:Gamma-prod-reg-def}
	\begin{aligned}
	\Gamma^{(n)}_{m,k,r}(u,v)
	&:=\sum_{a=-2n-1}^{-1}\sum_{b=-2n-1}^{-1}
	C^{(n)}_{m,k}(r;a,b)[v_a u_b],
	\end{aligned}
	\end{equation}
	where the coefficients are those specified by \eqref{eq:Gamma-discrete2}.
	Define
	\begin{equation}\label{eq:formal-prod-reg-span}
	\mathcal S_{\mathrm{prod,reg}}(n;u,v)
	:=\Span_\C\{\Gamma^{(n)}_{m,k,r}(u,v):m\ge k\ge0,\ r\ge1\}.
	\end{equation}
\end{defn}

The operator families of the introduction are
\begin{align}
\mathrm{(A)}&:=\{(u_{-r}v)R^n_{m,0}(\cdot):
n\le m\le2n-1,\ 1\le r\le2n-m\},\label{eq:A-strip}\\
\mathrm{(B)}&:=\{(u_{-r}v)R^n_{m,n}(\cdot):
n\le m\le2n-1,\ 1\le r\le2n-m\},\label{eq:B-strip}\\
\mathrm{(C)}&:=\{(u_{-r}v)R^n_{m,0}(\cdot):
0\le m\le n-1,\ 1\le r\le n-m\}.\label{eq:C-strip}
\end{align}
Each list has \(\binom{n+1}{2}\) members. The formal seed families
\(\mathcal A\), \(\mathcal B\), \(\mathcal C\) consist of the coefficient vectors of
their members:
\begin{equation}\label{eq:formal-ABC-seeds}
\mathcal A,\ \mathcal B,\ \mathcal C:=
\bigl\{\Gamma^{(n)}_{m,k,r}(u,v):\ (u_{-r}v)R^n_{m,k}(\cdot)\ \text{in}\
\mathrm{(A)},\ \mathrm{(B)},\ \mathrm{(C)},\ \text{respectively}\bigr\}.
\end{equation}

The finite vectors \(\Gamma^{(n)}_{m,k,r}(u,v)\) record only the regular
negative modes. To obtain identities between product operators, we must also
control the full scalar arrays
\[
\bigl(C^{(n)}_{m,k}(r;a,b)\bigr)_{a,b\ge-2n-1}.
\]
We regard these arrays as functions of \((a,b)\), allowing infinitely many
nonzero entries, and take their ordinary complex linear span, whose elements
are finite linear combinations of arrays. Restriction to
\(-2n-1\le a,b\le-1\) gives the finite vectors in
Definition~\ref{def:universal-prod-reg-space}. The next lemmas control the
entire arrays without introducing an additional coefficient-space notation.

\subsection{Formal linear independence and the three quadratic bases}
\label{subsec:formal-three-bases}

\begin{lem}[Weight removal on all retained modes]
\label{lem:formal-weight-removal}
For every \(p,q\ge0\), define a row functional on scalar arrays \(c(a,b)\),
\(a,b\ge-2n-1\), by
\begin{equation}\label{eq:formal-rho}
\rho_{p,q}(c):=
\sum_{\alpha=0}^{p}\sum_{\beta=0}^{q}
\binom{-\wt v-n}{p-\alpha}\binom{-\wt u-n}{q-\beta}
c(-2n-1+\alpha,-2n-1+\beta).
\end{equation}
When \(p,q\le2n\), this also defines a functional on \(\mathcal F_n(u,v)\)
by reading \(c(a,b)\) as the coefficient of \([v_a u_b]\).
For \(m\ge k\ge0\) and \(s\ge0\), retain the finite column combinations
\begin{equation}\label{eq:formal-hat-Gamma}
\widehat\Gamma^{(n)}_{m,k,s}(u,v):=
\sum_{r=1}^{s+1}\binom{\wt u+n}{s+1-r}
\Gamma^{(n)}_{m,k,r}(u,v).
\end{equation}
The full arrays satisfy the exact identity
\begin{equation}\label{eq:formal-weight-free-entry}
\begin{aligned}
&\sum_{r=1}^{s+1}\binom{\wt u+n}{s+1-r}
\rho_{p,q}\bigl(C^{(n)}_{m,k}(r;\cdot,\cdot)\bigr)\\
&\qquad=
\binom{2n-q}{s}\binom{k-n+s}{m+p+q+s-2n+1}.
\end{aligned}
\end{equation}
For \(p,q\le2n\), its left side is
\(\rho_{p,q}(\widehat\Gamma^{(n)}_{m,k,s}(u,v))\).
The row transformation is invertible on the full arrays and unitriangular
on every finite downward-closed set of rows. For each fixed \((m,k)\),
the column transformation is unitriangular on every initial list
\(r=1,\ldots,R\).
\end{lem}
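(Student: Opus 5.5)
The plan is to exploit that both the row functional $\rho_{p,q}$ and the column combination act on separate factors of the product in \eqref{eq:Gamma-discrete2}. Since $\rho_{p,q}$ is a finite linear combination of entries, \eqref{eq:Gamma-discrete2} gives
\[
\sum_{r=1}^{s+1}\binom{\wt u+n}{s+1-r}\rho_{p,q}\bigl(C^{(n)}_{m,k}(r;\cdot,\cdot)\bigr)
=[x^{m+2n+1}]\Bigl((1+x)^{k-n}\,A_p(x)\,B_{q,s}(x)\Bigr),
\]
where
\[
A_p(x):=\sum_{\alpha=0}^{p}\binom{-\wt v-n}{p-\alpha}\Phi_{-2n-1+\alpha,1,v}(x),
\]
\[
B_{q,s}(x):=\sum_{r=1}^{s+1}\binom{\wt u+n}{s+1-r}\sum_{\beta=0}^{q}\binom{-\wt u-n}{q-\beta}\Phi_{-2n-1+\beta,r,u}(x).
\]
I will compute $A_p$ and $B_{q,s}$ in closed form and show that the weights disappear from both.

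\emph{The $v$-factor.} Write $w=\wt v+n$. By \eqref{eq:Phi-jru} with $r=1$,
\[
\Phi_{-2n-1+\alpha,1,v}(x)=\sum_{i=0}^{\alpha}\binom wi x^{2n-(\alpha-i)}.
\]
Substitute $d=\alpha-i\ge0$. The coefficient of $x^{2n-d}$ in $A_p$ is then $\sum_{i}\binom wi\binom{-w}{p-d-i}$, and the range of $i$ is automatically the full Vandermonde range. By Vandermonde this equals $\binom0{p-d}=\delta_{d,p}$. Hence
\[
A_p(x)=x^{2n-p}.
\]

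\emph{The $u$-factor.} Write $w'=\wt u+n$ and $X:=x+y(1+x)$. By Lemma~\ref{lem:Phi-y-packaging},
\[
\Phi_{-2n-1+\beta,r,u}(x)=[y^{r-1}](1+y)^{-w'}\sum_{i=0}^{\beta}\binom{w'}{i}X^{2n-(\beta-i)}.
\]
This has the same shape as the $v$-factor with $x$ replaced by $X$. The identical Vandermonde collapse therefore gives
\[
\sum_{\beta}\binom{-w'}{q-\beta}\Phi_{-2n-1+\beta,r,u}(x)=[y^{r-1}](1+y)^{-w'}X^{2n-q}.
\]
The column sum then becomes
\[
\sum_r\binom{w'}{s+1-r}[y^{r-1}]G(y)=[y^s]\bigl((1+y)^{w'}G(y)\bigr),
\]
which cancels the factor $(1+y)^{-w'}$. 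This leaves
\[
B_{q,s}(x)=[y^s]X^{2n-q}=\binom{2n-q}{s}(1+x)^s x^{2n-q-s}.
\]

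\emph{Assembling and triangularity.} Multiplying the two factors,
\[
[x^{m+2n+1}]\binom{2n-q}{s}(1+x)^{k-n+s}x^{4n-p-q-s}=\binom{2n-q}{s}\binom{k-n+s}{m+p+q+s-2n+1},
\]
which is \eqref{eq:formal-weight-free-entry}. For $p,q\le2n$, every index $-2n-1+\alpha,-2n-1+\beta$ lies in $[-2n-1,-1]$. So $\rho_{p,q}$ reads only the regular square, and the left side is $\rho_{p,q}(\widehat\Gamma^{(n)}_{m,k,s})$.

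For the structural claims, the row transformation sends $c$ to $(\rho_{p,q}(c))_{p,q}$, and $\rho_{p,q}(c)=c(-2n-1+p,-2n-1+q)+{}$(terms with $\alpha\le p$, $\beta\le q$, not both equal). It is therefore unitriangular with respect to the product order on downward-closed sets. On full arrays it is invertible, with inverse given by the analogous kernel $\binom{\wt v+n}{\cdot}\binom{\wt u+n}{\cdot}$, again by Vandermonde. The column transformation has coefficient $\binom{w'}{0}=1$ at $r=s+1$, so it is unitriangular on $r=1,\dots,R$.

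The step needing most care is the Vandermonde collapse in the $u$-factor. There I must check that the truncation $i\le 2n+b+1$ in $\Phi$ matches exactly the range $i\le\beta$. I must also check that the formal expansion of $X^{2n-q}$ is legitimate when the exponent $2n-(\beta-i)$ can be negative for $q>2n$, since $p,q$ are arbitrary. For this I will treat the whole computation in $\C((x))[[y]]$ and use the expansion convention \eqref{eq:binomial-expansion} throughout.
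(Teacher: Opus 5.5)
Your proposal is correct and follows essentially the paper's route. Both arguments collapse the row convolutions by Vandermonde to $x^{2n-p}$ and to $[y^{r-1}](1+y)^{-\wt u-n}X^{2n-q}$; the paper writes the latter out as $\sum_{h}\binom{-\wt u-n}{r-1-h}\binom{2n-q}{h}x^{2n-q-h}(1+x)^h$. The $r$-convolution then cancels the remaining $\binom{-\wt u-n}{\cdot}$ factor, and the triangularity claims and the inverse kernel $\binom{\wt v+n}{\cdot}\binom{\wt u+n}{\cdot}$ are handled the same way. Your worry about negative exponents is unfounded: the collapse is only a linear identity among the fixed elements $X^{2n-d}\in\C[x,x^{-1}][[y]]$.
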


\begin{proof}
The weights enter \(C^{(n)}_{m,k}(r;a,b)\) only through the factors
\(\binom{\wt v+n}{\cdot}\), \(\binom{\wt u+n}{\cdot}\) and
\(\binom{-\wt u-n}{\cdot}\) in \(\Phi\). For the lowest retained
modes, the numerator of \(\Phi\) is a partial sum of the binomial series of
\((1+x)^{\wt u+n}\) (respectively \((1+x)^{\wt v+n}\)). Convolving the rows
with the coefficients of the inverse series leaves a single monomial, and
the column convolution in \(r\) cancels the remaining factor
\(\binom{-\wt u-n}{\cdot}\).

The finite convolutions in Definition~\ref{def:Phi-jru} give, for all
\(p,q\ge0\),
\begin{align*}
\sum_{\alpha=0}^{p}\binom{-\wt v-n}{p-\alpha}
\Phi_{-2n-1+\alpha,1,v}(x)
&=x^{2n-p},\\
\sum_{\beta=0}^{q}\binom{-\wt u-n}{q-\beta}
\Phi_{-2n-1+\beta,r,u}(x)
&=\sum_{h=0}^{r-1}\binom{-\wt u-n}{r-1-h}
\binom{2n-q}{h}x^{2n-q-h}(1+x)^h.
\end{align*}
To see the cancellation in the second formula directly, write a numerator
summation index as \(\ell\), so \(0\le\ell\le\beta\), and put
\(d=\beta-\ell\). The coefficient of
\(\binom{2n-d}{h}x^{2n-d-h}(1+x)^h\) contains the finite sum
\[
\sum_{\ell=0}^{q-d}
\binom{-\wt u-n}{q-d-\ell}\binom{\wt u+n}{\ell}
=\delta_{q,d}.
\]
The first formula is the same calculation with \(r=1\). No bound
\(p,q\le2n\) is used.

Insert these formulas into \eqref{eq:Gamma-discrete2}. The subsequent
convolution in \(r\) cancels
\(\binom{-\wt u-n}{r-1-h}\), leaving
\[
\binom{2n-q}{s}[x^{m+2n+1}]
\bigl(x^{4n-p-q-s}(1+x)^{k-n+s}\bigr),
\]
which is \eqref{eq:formal-weight-free-entry}.
Each row uses finitely many predecessors and has diagonal coefficient one.
Its inverse is the same finite convolution with
\(\binom{\wt v+n}{p-\alpha}\binom{\wt u+n}{q-\beta}\).
Thus it is an automorphism of the space of all scalar arrays. On a finite
downward-closed set all predecessors remain in that set, giving the stated
unitriangular row change. The column change also has diagonal coefficient
one and uses only smaller descendant orders.
\end{proof}

\begin{lem}[An upper bound for the full coefficient arrays]
\label{lem:formal-dimension-upper-bound}
The complex span of all scalar arrays
\(C^{(n)}_{m,k}(r;\cdot,\cdot)\), with \(m\ge k\ge0\) and \(r\ge1\),
has dimension at most \(n(n+1)\). In particular,
\[
\dim\mathcal S_{\mathrm{prod,reg}}(n;u,v)\le n(n+1).
\]
\end{lem}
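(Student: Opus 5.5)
The plan is to bound the span of the full arrays from above by an explicit finite set of generators. I would use two finite recurrences coming from \eqref{eq:Gamma-discrete2}, one in the mode index $m$ and one in the descendant order $r$, and then count the survivors.

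\emph{Step 1: reduction to $k=0$.} Since $(1+x)^{k-n}=(1+x)^k(1+x)^{-n}$ and $(1+x)^k=\sum_t\binom kt x^t$, formula \eqref{eq:Gamma-discrete2} gives the exact array identity
\[
C^{(n)}_{m,k}(r;\cdot,\cdot)=\sum_{t=0}^{k}\binom kt\,C^{(n)}_{m-t,0}(r;\cdot,\cdot).
\]
This is the coefficient analogue of \eqref{eq:Rmk-binomial-reduction}, and all indices $m-t\ge m-k\ge0$ are admissible. So the span is that of the arrays $C^{(n)}_{m,0}(r;\cdot,\cdot)$ with $m\ge0$ and $r\ge1$.

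\emph{Step 2: truncation in $m$.} By Corollary~\ref{cor:Phi-Laurent-and-shifted-poly}, $\Phi_{a,1,v}\Phi_{b,r,u}$ has no powers of $x$ above $x^{4n}$. Multiplying by $(1+x)^n$ therefore gives
\[
\sum_{t=0}^{n}\binom nt C^{(n)}_{m-t,0}(r;a,b)=[x^{m+2n+1}]\bigl(\Phi_{a,1,v}\Phi_{b,r,u}\bigr)=0\qquad(m\ge2n),
\]
for all $a,b\ge-2n-1$ simultaneously. This is a monic recurrence of order $n$ in $m$. Hence for each $r$ the arrays with $m\ge2n$ lie in the span of those with $0\le m\le2n-1$.

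\emph{Step 3: truncation in $r$.} Here I would pass to the weight-free form of Lemma~\ref{lem:formal-weight-removal}. The row transformation $\rho$ is an automorphism of the space of all arrays, so it preserves dimensions of spans. For fixed $m$, the column change $r\mapsto s$ is unitriangular on each initial list $r\le R$, so spanning over $r\le R$ is the same as spanning over $s\le R-1$. The transformed arrays are
\[
T_{m,s}(p,q)=\binom{2n-q}{s}\binom{s-n}{m+p+q+s-2n+1}\qquad(p,q\ge0).
\]
The goal is to show that, modulo arrays $T_{m',s'}$ with $0\le m'\le 2n-1$ and smaller admissible $s'$, one may take $s\le n-m-1$ when $m\le n-1$ and $s\le 2n-m-1$ when $n\le m\le2n-1$. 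This is exactly the index set of $\mathcal C\cup\mathcal A$, which has $2\binom{n+1}{2}=n(n+1)$ elements and gives the bound. I would look for a Pascal-type three-term relation among $T_{m,s}$, $T_{m,s-1}$ and $T_{m+1,s-1}$ (or $T_{m-1,s}$), using $\binom{s-n}{j}=\binom{s-n-1}{j}+\binom{s-n-1}{j-1}$ together with the factor $\binom{2n-q}{s}$. Equivalently, I would use the generating form
\[
T_{m,s}(p,q)=\binom{2n-q}{s}[x^{m+2n+1}]\bigl(x^{4n-p-q-s}(1+x)^{s-n}\bigr),
\]
multiply by $(1+x)$ or $x$, and trade one unit of $s$ against a shift of $m$. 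I would then combine this with the $m$-recurrence of Step~2 to push each excess $s$ down into the allowed region. The statement for $\mathcal S_{\mathrm{prod,reg}}(n;u,v)$ follows by restricting to $-2n-1\le a,b\le-1$, since restriction is linear.

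I expect Step~3 to be the main obstacle, for two reasons. The upper index $s-n$ may be negative, so the arrays have infinite support in $(p,q)$. The factor $\binom{2n-q}{s}$ also behaves differently for $q\le 2n$ and $q>2n$. The relation must hold for all $p,q\ge0$, not just on the regular square. If no clean three-term relation appears, my fallback is different. I would show that every $T_{m,s}$ lies in an explicitly described space: the arrays $(p,q)\mapsto\binom{2n-q}{s}g(p+q)$, where for each $s$ the function $g$ of $d=p+q$ is constrained to the coefficient sequences of $x^{-e}(1+x)^{s-n}$. I would then bound the dimension of this ambient space directly by $n(n+1)$, by counting the admissible pairs $(e,s)$ modulo the relations coming from Step~2.
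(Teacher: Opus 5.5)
\paragraph{Verdict: there is a genuine gap in Step~3.}
Steps~1 and~2 are correct; Step~2 is valid but not needed. Your $T_{m,s}$ is the $k=0$ case of \eqref{eq:formal-weight-free-entry}. The trouble is that Step~3 carries the entire count and is not carried out. After Steps~1--2 you still have all $T_{m,s}$ with $0\le m\le 2n-1$ and $0\le s\le 2n$ (they vanish for $s>2n$), which is many more than $n(n+1)$.

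\paragraph{Why the proposed mechanism fails.} Write $T_{m,s}(p,q)=\binom{2n-q}{s}g_{m,s}(p+q)$ with $g_{m,s}(d)=\binom{s-n}{m+s+d-2n+1}$. Pascal's rule gives $g_{m,s}=g_{m,s-1}+g_{m+1,s-1}$, hence $T_{m,s-1}+T_{m+1,s-1}=\binom{2n-q}{s-1}g_{m,s}$. This differs from $T_{m,s}$ by the non-constant factor $(2n-q-s+1)/s$. For example, at $n=2$ one has $T_{1,1}=(4-q)\binom{-1}{p+q-1}$, whereas $T_{1,0}+T_{2,0}=\binom{-1}{p+q-1}$. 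So no constant-coefficient three-term relation of that shape exists.

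The relations that do hold look different. At $n=2$, $T_{3,1}=-T_{2,1}$ and $T_{1,1}=-T_{2,1}+4(T_{1,0}+2T_{2,0}+T_{3,0})$. These shift $m$ at fixed $s$, up to arrays supported near $p=q=0$; they do not trade $s$ for $m$. Note also that your target, that the index set of $\mathcal C\cup\mathcal A$ already spans, is stronger than the lemma. The paper obtains it only by combining this upper bound with the independence argument of Proposition~\ref{prop:AB-prodreg-full-rank}.

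\paragraph{The missing idea: count on the row side.} Instead of reducing generators, split the rows into the $\binom{n+1}{2}$ low rows $p+q\le n-1$ and the high rows $p+q\ge n$.
\begin{itemize}
\item On a high row with $s\ge n$, $T_{m,s}=0$, because the lower index exceeds the nonnegative upper index $s-n$.
\item On a high row with $s<n$, the negative-binomial identity gives $T_{m,s}(p,q)=(-1)^{m+p+q+s+1}\binom{2n-q}{s}\binom{m+p+q-n}{n-s-1}$. This is the row sign $(-1)^{p+q}$ times a polynomial in $(p,q)$ of total degree at most $n-1$. It holds for $q>2n$ as well, using generalized binomial coefficients.
\end{itemize}
Hence the high-row restrictions span at most $\binom{n+1}{2}$ dimensions, and the low-row restrictions span at most $\binom{n+1}{2}$. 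An array is determined by its restrictions to the two sets of rows, so the span has dimension at most $n(n+1)$. This also disposes of both difficulties you flagged: infinite support and $q>2n$.

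\paragraph{What your fallback would need.} For each $s<n$, the high-row parts of the $T_{m,s}$ lie in the $(n-s)$-dimensional space $(-1)^{p+q}\binom{2n-q}{s}\C[p+q]_{\le n-s-1}$. So, modulo arrays supported on the low rows, only $\sum_{s<n}(n-s)=\binom{n+1}{2}$ generators are needed. As written, counting pairs $(e,s)$ modulo the Step~2 recurrence does not use this low/high split and does not reach $n(n+1)$.
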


\begin{proof}
After reducing to \(k=0\) and applying the changes of
Lemma~\ref{lem:formal-weight-removal}, the entries on the rows with
\(p+q\ge n\) are values of polynomials in \((p,q)\) of degree at most
\(n-1\). These rows therefore contribute at most \(\binom{n+1}{2}\)
dimensions, and there are only \(\binom{n+1}{2}\) other rows.

Expanding the finite factor \((1+x)^k\) in
\eqref{eq:Gamma-discrete2} gives the same identity on every mode pair:
\[
C^{(n)}_{m,k}(r;a,b)=
\sum_{\ell=0}^{k}\binom{k}{\ell}C^{(n)}_{m-\ell,0}(r;a,b).
\]
Since \(m\ge k\), every index \(m-\ell\) is nonnegative. Restricting to
the regular square gives
\begin{equation}\label{eq:formal-k-reduction}
\Gamma^{(n)}_{m,k,r}(u,v)=
\sum_{\ell=0}^{k}\binom{k}{\ell}
\Gamma^{(n)}_{m-\ell,0,r}(u,v).
\end{equation}
It therefore suffices to bound the full arrays with \(k=0\).
The finite invertible column changes in
\eqref{eq:formal-hat-Gamma} allow us to use the corresponding arrays
indexed by \(s\ge0\), and the invertible row change of
Lemma~\ref{lem:formal-weight-removal} preserves their span dimension.

On every row \(p+q\ge n\), the transformed entry
\eqref{eq:formal-weight-free-entry} vanishes for \(s\ge n\): its second
lower index exceeds the nonnegative upper index by
\(m+p+q-n+1>0\). For \(0\le s<n\), that entry equals
\begin{equation}\label{eq:formal-high-polynomial}
(-1)^{m+p+q+s-2n+1}
\binom{2n-q}{s}\binom{m+p+q-n}{n-s-1}.
\end{equation}
The negative-binomial identity proves this when the extracted index is
nonnegative. If it is negative, both sides vanish, since
\(m+p+q-n\ge0\) and it is then smaller than \(n-s-1\).
After multiplying each row by \((-1)^{p+q}\), the entries on these rows
are evaluations of polynomials in \(p,q\) of total degree at most
\(n-1\). This remains true for \(q>2n\), using generalized binomial
coefficients. Their high-row restrictions consequently span a space of
dimension at most \(\binom{n+1}{2}\).

There are exactly \(\binom{n+1}{2}\) low rows \(p+q\le n-1\).
Restriction to the low and high rows together is injective, so the
dimension of the whole transformed span is at most the sum of these two
bounds, namely \(n(n+1)\). Restriction to the regular square cannot
increase dimension, proving the final assertion.
\end{proof}

\begin{lem}[Polynomial bases and triangular grids]
	\label{lem:formal-polynomial-grids}
	For any \(c\in\C\), the polynomials
	\begin{equation}\label{eq:formal-polynomial-basis}
	\binom{2n-t+p}{s}\binom{c+t+i}{n-s-1},
	\qquad i,s\ge0,\quad i+s\le n-1,
	\end{equation}
	form a basis of the polynomials in \(t,p\) of total degree at most \(n-1\).
	Evaluation of such polynomials is injective on either grid
	\begin{equation}\label{eq:formal-low-high-grids}
	\begin{aligned}
	\mathcal L_n&:=\{(t,p):0\le t\le n-1,\ 0\le p\le t\},\\
	\mathcal H_n&:=\{(t,p):n\le t\le2n-1,\ 0\le p\le2n-1-t\}.
	\end{aligned}
	\end{equation}
\end{lem}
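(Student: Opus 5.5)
The plan is a linear change of variables that splits the two binomial factors, a triangularity argument for the spanning, and a reduction of both grids to the standard simplex grid.

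For the basis statement I will use the invertible linear change of variables $X=p-t$, $Y=t$. It preserves the space $\mathcal P_{n-1}$ of polynomials of total degree at most $n-1$, and it separates the two factors:
\[
\binom{2n-t+p}{s}\binom{c+t+i}{n-s-1}=\binom{2n+X}{s}\binom{c+Y+i}{n-s-1}.
\]
Each such product has degree exactly $s$ in $X$ and $n-s-1$ in $Y$, so it lies in $\mathcal P_{n-1}$. The index set $\{(i,s):i,s\ge0,\ i+s\le n-1\}$ has $\binom{n+1}{2}=\dim\mathcal P_{n-1}$ elements. It therefore suffices to prove spanning.

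Fix $s$ and put $d=n-s-1$. I claim the $d+1$ translates $\binom{c+Y+i}{d}$, $0\le i\le d$, span the polynomials in $Y$ of degree at most $d$. The forward difference in $i$ satisfies $\binom{c+Y+i+1}{d}-\binom{c+Y+i}{d}=\binom{c+Y+i}{d-1}$. Iterating it shows that the span contains $\binom{c+Y}{e}$ for every $0\le e\le d$, and these polynomials have exact degrees $0,\dots,d$. Hence, for fixed $s$, the span over $i$ is $\binom{2n+X}{s}\cdot\C[Y]_{\le n-1-s}$.

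To finish spanning, I induct downward on the $X$-degree. The polynomial $\binom{2n+X}{s}$ equals $X^s/s!$ plus terms of lower $X$-degree. So the monomial $X^sY^j$ with $s+j\le n-1$ is, up to a nonzero scalar, $\binom{2n+X}{s}Y^j$ minus combinations of $X^{s'}Y^j$ with $s'<s$, and these still satisfy $s'+j\le n-1$. The triangular structure in the $X$-degree thus gives every monomial of $\mathcal P_{n-1}$, which proves the basis statement.

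For injectivity on the grids, it suffices to map each grid affinely and bijectively onto the standard simplex grid $\Delta_n=\{(\alpha,\beta)\in\N^2:\alpha+\beta\le n-1\}$, since affine changes preserve $\mathcal P_{n-1}$.
\begin{itemize}
\item For $\mathcal L_n$, use $(\alpha,\beta)=(p,t-p)$.
\item For $\mathcal H_n$, use $(\alpha,\beta)=(p,\,2n-1-t-p)$. The constraints $n\le t\le 2n-1$ and $0\le p\le 2n-1-t$ become exactly $\alpha,\beta\ge0$ and $\alpha+\beta\le n-1$.
\end{itemize}

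Unisolvence on $\Delta_n$ follows from the basis $\binom{\alpha}{a}\binom{\beta}{b}$, $a+b\le n-1$, of $\mathcal P_{n-1}$. Its evaluation matrix at the points $(\alpha,\beta)\in\Delta_n$ is triangular for the product partial order, because $\binom{\alpha}{a}\binom{\beta}{b}$ vanishes unless $a\le\alpha$ and $b\le\beta$, and it has ones on the diagonal. The matrix is square of size $\binom{n+1}{2}$ and invertible, so a polynomial in $\mathcal P_{n-1}$ vanishing on either grid is zero.

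The only mildly delicate point is the spanning step, namely that the translates in $i$ combine with the $s$-triangularity without leaving $\mathcal P_{n-1}$. The degree bookkeeping above handles it, and the parameter $c$ plays no role.
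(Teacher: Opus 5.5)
Your proof is correct and follows essentially the paper's route: finite differences show that the translates $\binom{c+Y+i}{d}$ span the one-variable polynomials of degree at most $d$; triangularity in the degree of the first factor, together with the count $\binom{n+1}{2}$, gives the basis; and affine bijections of both grids onto the simplex grid, where the evaluation matrix of $\binom{\alpha}{a}\binom{\beta}{b}$ is unitriangular, give injectivity. The differences are cosmetic: you prove spanning after substituting $X=p-t$, while the paper proves independence by reading off the top $p$-coefficient, and you reflect $\mathcal H_n$ onto the simplex while the paper translates by $j=t-n$ (also, your induction on the $X$-degree actually runs upward in $s$, not downward).
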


\begin{proof}
	For fixed \(s\), the \(n-s\) polynomials
	\(\binom{c+t+i}{n-s-1}\), \(0\le i\le n-s-1\), form a basis in
	one variable of degree at most \(n-s-1\), since finite differences give
	\[
	\sum_{i=0}^{j}(-1)^{j-i}\binom ji
	\binom{c+t+i}{n-s-1}=\binom{c+t}{n-s-1-j}
	\quad(0\le j\le n-s-1).
	\]
	In a linear relation among \eqref{eq:formal-polynomial-basis}, take the
	largest \(s\) with a nonzero coefficient polynomial in \(t\).
	Its coefficient of \(p^s\) is that polynomial divided by \(s!\), a
	contradiction.  The number of polynomials is \(\binom{n+1}{2}\), proving
	the basis claim.  On the grid \(j,p\ge0\), \(j+p\le n-1\), the evaluation
	matrix of \(\binom ja\binom pb\), \(a+b\le n-1\), is unitriangular
	when rows and columns are ordered by total degree: equal total degrees
	give an identity block.  Therefore this grid is unisolvent.  Apply the
	invertible affine substitutions \(j=t-p\) and \(j=t-n\) to obtain
	\(\mathcal L_n\) and \(\mathcal H_n\), respectively.
\end{proof}

\begin{prop}[Three bases of the universal coefficient space]
	\label{prop:AB-prodreg-full-rank}
	Each of the indexed unions
	\[
	\mathcal A\cup\mathcal B,\qquad
	\mathcal C\cup\mathcal B,\qquad
	\mathcal A\cup\mathcal C
	\]
	is a basis of \(\mathcal S_{\mathrm{prod,reg}}(n;u,v)\).  In particular,
	\begin{equation}\label{eq:formal-exact-dimension}
	\dim\mathcal S_{\mathrm{prod,reg}}(n;u,v)=n(n+1).
	\end{equation}
The corresponding complete scalar arrays are bases of their full
complex span. Restriction to the regular mode square is an isomorphism
from that span to $\mathcal S_{\mathrm{prod,reg}}(n;u,v)$.
\end{prop}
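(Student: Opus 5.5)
Since Lemma~\ref{lem:formal-dimension-upper-bound} bounds the span of the full arrays by $n(n+1)$, and each union has $n(n+1)$ members, the plan is to show that the \emph{regular-square restrictions} of each union are linearly independent. This gives everything at once. The restrictions span a space of dimension $n(n+1)$, so $\dim\mathcal S_{\mathrm{prod,reg}}(n;u,v)=n(n+1)$. The full arrays of each union are independent, hence a basis of the full span. Restriction maps that full span onto $\mathcal S_{\mathrm{prod,reg}}$, and a surjection between spaces of equal dimension $n(n+1)$ is an isomorphism.

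To prove independence I work after the changes of Lemma~\ref{lem:formal-weight-removal}. The row functionals $\rho_{p,q}$ with $p,q\le2n$ use only regular-square entries, and they are invertible there since the index set is downward closed. So it suffices to test independence on the transformed entries \eqref{eq:formal-weight-free-entry} at rows $0\le p,q\le 2n$.

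For the columns, (B) has $k=n$ and (A), (C) have $k=0$. I would replace each seed by the combinations $\widehat\Gamma$ of \eqref{eq:formal-hat-Gamma}, indexed by $s=r-1$. This is legitimate because, within each family, for fixed $m$ the allowed $r$ form an initial list $1,\dots,R$, so the column change is unitriangular. For (A) and (C), $m$ ranges over $n\le m\le 2n-1$ with $0\le s\le 2n-1-m$, respectively over $0\le m\le n-1$ with $0\le s\le n-1-m$. For (B) the entry is $\binom{2n-q}{s}\binom{s}{m+p+q+s-2n+1}$.

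Next I exploit the vanishing pattern. On the high rows $p+q\ge n$, a $k=0$ column with $s\le n-1$ has the entry \eqref{eq:formal-high-polynomial}, which is polynomial in $(p,q)$. For (B), set $j=m+p+q-2n+1$; the entry is $\binom{2n-q}{s}\binom{s}{s+j}$, which vanishes unless $j\le0$, i.e.\ $p+q\le 2n-1-m$. Choose the rows $(p,q)$ with $p+q=t$ and $q\le\dots$ so that they match the grids $\mathcal L_n$ and $\mathcal H_n$ of Lemma~\ref{lem:formal-polynomial-grids}. Then:
\begin{itemize}
\item \emph{(A)$\cup$(B):} (A) is a polynomial family on the high grid $\mathcal H_n$. (B) is supported in $p+q\le n-1$, and there it is triangular in $(m,p+q)$: at $p+q=2n-1-m$ the entry is $\binom{2n-q}{s}$, a polynomial in $q$ of degree $s$, and these are independent for $0\le s\le 2n-1-m$ on the $2n-m$ available values of $q$.
\item \emph{(C)$\cup$(B):} (C) columns on the low rows are checked via the same triangularity.
\item \emph{(A)$\cup$(C):} here I would use that the (A) and (C) entries are evaluations of the basis \eqref{eq:formal-polynomial-basis} on $\mathcal H_n$ and $\mathcal L_n$ respectively, with $t=p+q$ and $c$ fixed by $m$.
\end{itemize}
In each case I order the columns so that the matrix becomes block triangular, with diagonal blocks that are evaluation matrices on the grids of Lemma~\ref{lem:formal-polynomial-grids}, and hence invertible.

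\textbf{Main obstacle.} The hardest step is verifying that the specific (A) and (C) entries, for instance $(-1)^{\cdots}\binom{2n-q}{s}\binom{m+p+q-n}{n-s-1}$, really form the polynomial basis \eqref{eq:formal-polynomial-basis} under a suitable reparametrization. The natural guess is $t=p+q$, $p\mapsto$ a shift of $p$, and $i=m-n$, but this has to be checked, and so does the claim that the off-diagonal blocks vanish. If the exact block structure fails, I would fall back on a dimension count. Lemma~\ref{lem:formal-dimension-upper-bound} splits the span into its low-row and high-row restrictions, each of dimension at most $\binom{n+1}{2}$. It then suffices to show that one family of each union is independent on the high rows, while the other is independent on the low rows modulo the kernel of the high-row restriction.
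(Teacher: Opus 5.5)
Your framework is the paper's: the upper bound of Lemma~\ref{lem:formal-dimension-upper-bound}, the row and column changes of Lemma~\ref{lem:formal-weight-removal}, and block-triangular matrices on $\mathcal L_n\cup\mathcal H_n$ with invertible evaluation blocks. Your $\mathcal A\cup\mathcal B$ argument is exactly the paper's: $\mathcal A$ sits on $\mathcal H_n$, and $\mathcal B$ is supported in $p+q\le n-1$ and is triangular in $t$ against $2n-1-m$.

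The two unions containing $\mathcal C$ fail as you set them up, because you put $\mathcal C$ on the low rows, where it vanishes identically. For a $\mathcal C$ column, $k=0$ and $m+s\le n-1$. So in $\binom{2n-q}{s}\binom{s-n}{m+p+q+s-2n+1}$ the lower index is at most $t-n<0$ whenever $t=p+q\le n-1$. Your diagonal block ``$\mathcal C$ on $\mathcal L_n$'' for $\mathcal A\cup\mathcal C$ is therefore the zero matrix. For $\mathcal C\cup\mathcal B$, checking $\mathcal C$ on the low rows controls nothing. Already at $n=1$, on the rows $(0,0),(0,1)$, the single $\mathcal C$ column is $(0,1)$.

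The fix is to swap roles.
\begin{itemize}
\item On $\mathcal H_n$, \eqref{eq:formal-high-polynomial} with $i=m$ gives $\pm\binom{2n-t+p}{s}\binom{i+t-n}{n-s-1}$. This is the basis \eqref{eq:formal-polynomial-basis} with $c=-n$, so $\mathcal C|_{\mathcal H_n}$ is invertible.
\item For $\mathcal A\cup\mathcal C$ you then need $\mathcal A$ on $\mathcal L_n$, with entry $\pm\binom{2n-t+p}{s}\binom{i+t}{n-s-1}$, where $i=m-n$ and $c=0$.
\item This is not covered by \eqref{eq:formal-high-polynomial}, which you only have for $p+q\ge n$. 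You must check the negative-binomial rewriting of $\binom{s-n}{i+t+s-n+1}$ on the low rows as well. When the extracted index is negative, both sides vanish, because $0\le i+t<n-s-1$.
\end{itemize}
With $\mathcal B|_{\mathcal H_n}=0$ and $\mathcal C|_{\mathcal L_n}=0$, the $\mathcal C\cup\mathcal B$ matrix is block diagonal. The $\mathcal A\cup\mathcal C$ matrix is block lower triangular in the row order $\mathcal L_n,\mathcal H_n$, with diagonal blocks $\mathcal A|_{\mathcal L_n}$ and $\mathcal C|_{\mathcal H_n}$.

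Two smaller points. The rows to use are those with $2p+q\le 2n-1$; on $\mathcal H_n$ the cut is $p\le 2n-1-t$, a condition on $p$, not $q$. Your ``main obstacle'' is harmless, since $2n-q=2n-t+p$ needs no shift of $p$. Your fallback dimension count would need exactly these same vanishing and invertibility facts, so it does not avoid the issue.
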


\begin{proof}
	By Lemma~\ref{lem:formal-dimension-upper-bound}, it suffices to show that
	each union is linearly independent. We test this on \(n(n+1)\) rows on
	which, after the changes of Lemma~\ref{lem:formal-weight-removal}, each
	matrix is block triangular and every diagonal block is an invertible
	polynomial evaluation matrix.

	Use the \(n(n+1)\) rows
	\[
	\mathcal W_n:=\{(p,q):0\le p\le n-1,\ 0\le q\le2n-2p-1\}.
	\]
	This window is downward closed, so \eqref{eq:formal-rho} is an invertible
	row operation within this window.  At each fixed \((m,k)\), each seed
	list is an initial \(r\)-list, so \eqref{eq:formal-hat-Gamma} is also an
	invertible column operation.  Put \(t=p+q\).  The window is precisely
	the disjoint union \(\mathcal L_n\cup\mathcal H_n\) of
	\eqref{eq:formal-low-high-grids}.

	Index the transformed columns by \(i,s\ge0\), \(i+s\le n-1\), where
	\(s=r-1\), and \(i=m-n\) for \(\mathcal A\) and \(\mathcal B\), \(i=m\) for \(\mathcal C\).
	For \(\mathcal A,\mathcal B,\mathcal C\) their entries, respectively,
	are the following scalars, by \eqref{eq:formal-weight-free-entry}:
	\begin{equation}\label{eq:formal-ABC-matrix-entries}
	\begin{aligned}
	\mathcal A:\quad &\binom{2n-t+p}{s}\binom{s-n}{i+t+s-n+1},\\
	\mathcal B:\quad &\binom{2n-t+p}{s}\binom{s}{n-1-i-t},\\
	\mathcal C:\quad &\binom{2n-t+p}{s}\binom{s-n}{i+t+s-2n+1}.
	\end{aligned}
	\end{equation}
	The symmetry used in the middle expression is valid because \(s\ge0\).

	\smallskip
	\noindent\emph{The zero blocks.}
	On \(\mathcal H_n\), the \(\mathcal B\)-entry is zero because
	\(n-1-i-t<0\).  On \(\mathcal L_n\), the \(\mathcal C\)-entry is zero
	because \(i+s\le n-1\) gives \(i+t+s-2n+1\le t-n<0\).

	\smallskip
	\noindent\emph{The two \(\mathcal A\)-blocks and the high \(\mathcal C\)-block.}
	The first expression in \eqref{eq:formal-ABC-matrix-entries}, on either
	grid, is
	\[
	(-1)^{i+t+s-n+1}
	\binom{2n-t+p}{s}\binom{i+t}{n-s-1}.
	\]
	The third expression, on \(\mathcal H_n\), is
	\[
	(-1)^{i+t+s-2n+1}
	\binom{2n-t+p}{s}\binom{i+t-n}{n-s-1}.
	\]
	In each case the sign splits into a row sign and a column sign.
	The upper indices \(i+t\) and \(i+t-n\) are nonnegative on their
	respective grids, so the formulas also hold when the original extracted
	index is negative, with both sides zero.  Lemma~\ref{lem:formal-polynomial-grids},
	with \(c=0\) or \(c=-n\), proves that all three square blocks are invertible.

	\smallskip
	\noindent\emph{The low \(\mathcal B\)-block.}
	Reindex its columns by \(h=n-1-i\), so \(0\le s\le h\le n-1\).
	Its entry is
	\[
	\binom{2n-t+p}{s}\binom{s}{h-t}.
	\]
	It is zero if \(t>h\).  Ordering the row blocks by \(t\) and the column
	blocks by \(h\) makes this a block-triangular matrix.  The diagonal block
	\(t=h\) is \((\binom{2n-t+p}{s})_{0\le p,s\le t}\).
	Its columns are a polynomial basis of degrees \(0,\ldots,t\), evaluated
	at the distinct points \(p=0,\ldots,t\); hence it is invertible.

	Consequently the \(\mathcal A\cup\mathcal B\) and
	\(\mathcal C\cup\mathcal B\) matrices are block triangular on the row
	order \(\mathcal H_n,\mathcal L_n\), with invertible diagonal blocks.
	The \(\mathcal A\cup\mathcal C\) matrix has the same property on the row
	order \(\mathcal L_n,\mathcal H_n\).  Thus all three original matrices have full rank. Each union contains
	\(n(n+1)\) independent finite vectors, and its full scalar arrays are
	therefore independent as well. Lemma~\ref{lem:formal-dimension-upper-bound}
	gives the matching upper bound on the full-array span. Consequently each
	union is a basis both before and after restriction to the regular square.
	Restriction takes a basis to a basis, proving that it is an isomorphism.
\end{proof}

\subsection{Spanning the quadratic residue operators}

The scalar bases now give identities of product operators. The use of full
arrays is essential here: equality of coefficients only on the regular
negative-mode square would not control the remaining terms in
\eqref{eq:prod-C-expansion-mk}.

\begin{prop}[Spanning set for \(\mathcal Q_n((u,v))\)]
\label{prop:empirical-rank}
The images of the operator families \(\mathrm{(A)}\) and
\(\mathrm{(B)}\) in \eqref{eq:A-strip}--\eqref{eq:B-strip} span
\(\mathcal Q_n((u,v))\). In particular,
\[
\dim\mathcal Q_n((u,v))\le n(n+1).
\]
The number of listed operators is
\[
\#\mathrm{(A)}+\#\mathrm{(B)}
=2\sum_{m=n}^{2n-1}(2n-m)=n(n+1).
\]
All reduction coefficients are independent of the right input \(z\).
\end{prop}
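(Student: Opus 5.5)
The plan is to transport the coefficient statement of Proposition~\ref{prop:AB-prodreg-full-rank} to operators. First I reduce the generators of $\mathcal Q_n((u,v))$. By the last-mode reduction (Lemma~\ref{lem:two-iterate-word-Rmk-in-generators}) and the discussion after it, $\mathcal Q_n((u,v))$ is spanned by the maps \eqref{eq:pair-regular-residue-maps}, $z\mapsto(u_{-r}v)R^n_{m,k}z+U$ with $r\ge1$ and $m\ge k\ge0$. By \eqref{eq:drop-comm-strip} each of these maps equals, modulo $U$, the product contribution $z\mapsto((u_{-r}v)R^n_{m,k}z)_{\mathrm p}$. By Proposition~\ref{prop:prod-master-general-mk}, that contribution is the operator
\[
z\longmapsto\sum_{a,b\ge-2n-1}C^{(n)}_{m,k}(r;a,b)\,v_au_bz .
\]
The key observation is that this operator depends linearly on the \emph{full} scalar array $C^{(n)}_{m,k}(r;\cdot,\cdot)$. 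The map from arrays to operators is well defined on finite linear combinations, since for fixed homogeneous $z$ only finitely many $(a,b)$ contribute, by the truncation remark after Lemma~\ref{lem:word-residue-split}.

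Next I write each array in terms of the seeds. By Lemma~\ref{lem:formal-dimension-upper-bound}, the complex span of all full arrays has dimension at most $n(n+1)$. By Proposition~\ref{prop:AB-prodreg-full-rank}, the full arrays of the seeds in $\mathcal A\cup\mathcal B$ are $n(n+1)$ independent elements of that span, so they form a basis of it. Hence every array $C^{(n)}_{m,k}(r;\cdot,\cdot)$ is a finite linear combination
\[
\sum_{\sigma}\lambda_\sigma\, C_\sigma(\cdot,\cdot),
\]
with $\sigma$ running over the seeds and with scalars $\lambda_\sigma$ that depend only on $(n,m,k,r,\wt u,\wt v)$ and not on $z$. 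The identity holds entrywise on all $(a,b)$ with $a,b\ge-2n-1$, not just on the regular square.

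Then I apply the linear map from arrays to operators. This gives
\[
((u_{-r}v)R^n_{m,k}z)_{\mathrm p}=\sum_\sigma\lambda_\sigma\,(\sigma z)_{\mathrm p}
\]
exactly in $V$, for every $z$. Applying \eqref{eq:drop-comm-strip} once more to each seed returns to the actual residues modulo $U$, so the images of (A) and (B) span. The count is $2\sum_{m=n}^{2n-1}(2n-m)=2\binom{n+1}{2}=n(n+1)$.

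The step needing the most care is justifying that the operator identity follows from the coefficient identity. The justification is that the basis statement holds for the complete arrays, not merely for their restrictions. This is exactly the restriction-isomorphism clause of Proposition~\ref{prop:AB-prodreg-full-rank} combined with the full-array upper bound of Lemma~\ref{lem:formal-dimension-upper-bound}, and I would cite those explicitly rather than rederive them.
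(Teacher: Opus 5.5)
Your proposal is correct and follows the paper's proof essentially step for step. Both arguments write each full array $C^{(n)}_{m,k}(r;\cdot,\cdot)$ as a $z$-independent combination of the $\mathcal A\cup\mathcal B$ arrays via Proposition~\ref{prop:AB-prodreg-full-rank}, and transfer this to product contributions by multiplying by $v_au_bz$ and summing, using lower truncation. Both then return to residues modulo $U$ through \eqref{eq:drop-comm-strip}, and conclude with the spanning of $\mathcal Q_n((u,v))$ by the maps \eqref{eq:pair-regular-residue-maps}.
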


\begin{proof}
By Proposition~\ref{prop:AB-prodreg-full-rank}, every full scalar array
\(C^{(n)}_{m,k}(r;\cdot,\cdot)\) is a finite linear combination of the
arrays with indices in \(\mathcal A\cup\mathcal B\). These scalar
coefficients depend on the fixed pair \((u,v)\) and on \(n,m,k,r\),
and do not depend on \(z\).

Multiply the coefficient identity at \((a,b)\) by \(v_a u_bz\) and sum
over all \(a,b\ge-2n-1\). On a fixed \(z\), only finitely many \(b\)
give \(u_bz\ne0\), and for each of them only finitely many \(a\)
give \(v_a u_bz\ne0\). Thus lower truncation makes every sum finite.
Equation~\eqref{eq:prod-C-expansion-mk} gives the corresponding exact
linear combination of product contributions in \(V\).

By \eqref{eq:drop-comm-strip}, each product contribution is congruent
modulo the same local subspace \(U\) to its full residue. We obtain the
claimed linear combination of residue operators modulo \(U\), valid
on every \(z\in V\). Finally, the induced maps
\eqref{eq:pair-regular-residue-maps} span \(\mathcal Q_n((u,v))\),
so the seed operators span that space as well. Counting them gives the
dimension bound.
\end{proof}

The coefficients in Proposition~\ref{prop:empirical-rank} are obtained by
inverting finite matrices. Explicit recurrences that reduce every product
coefficient array to the seeds of $\mathrm{(A)}\cup\mathrm{(B)}$, with closed-form
coefficients, and thereby give a second proof of
Lemma~\ref{lem:formal-dimension-upper-bound}, are recorded in the ancillary
file \texttt{anc/\allowbreak explicit-\allowbreak quadratic-\allowbreak recurrences.pdf}.

\begin{rem}[The operator bound can be strict for minimal strong generators]
	\label{rem:affine-level-one-rank-drop}
	Let \(V=L_1(\mathfrak{sl}_2)\), with the usual weight-one states
	\(e,h,f\).  The set \(\{e,h,f\}\) is a minimal homogeneous strong
	generating set.  Indeed, it strongly generates \(V\), while every
	nonvacuum normally ordered word of length at least two and every positive
	derivative has weight at least two.  Hence a homogeneous strong generating
	set must span \(V_1\cong\mathfrak{sl}_2\), so three weight-one generators
	are necessary.
	The level-one singular-vector relation is
	\[
	e_{-1}e=0.
	\]
	Using \([e,f]=h\) and \([h,e]=2e\), application of \(f_0\) gives
	\[
	0=f_0(e_{-1}e)=-h_{-1}e-e_{-1}h.
	\]
	Since \([h_{-1},e_{-1}]=2e_{-2}\), it follows that
	\begin{equation}\label{eq:affine-level-one-descendant-relation}
	e_{-1}h=-e_{-2}\mathbf1=-L(-1)e.
	\end{equation}
	Consequently, for every \(m\ge k\),
	\[
	(e_{-1}h)R^n_{m,k}(\cdot)
	=-(e_{-2}\mathbf1)R^n_{m,k}(\cdot)
	\]
	has image in \(O_n^\circ(e,V)\) by
	Lemma~\ref{lem:descendant-circle-subspace-inclusion}, and hence represents
	zero in \(\mathcal Q_n((e,h))\).

	Each of \(\mathrm{(A)}\) and \(\mathrm{(B)}\) contains exactly \(n\)
	members with \(r=1\).  Deleting these \(2n\) zero members from the
	\(n(n+1)\)-element spanning list gives the strict estimate
	\begin{equation}\label{eq:affine-level-one-rank-drop}
	\dim\mathcal Q_n((e,h))\le n(n-1)<n(n+1).
	\end{equation}
	We do not determine the exact dimension of
	\(\mathcal Q_n((e,h))\).
\end{rem}

\subsection{The specialization \texorpdfstring{$u=v$}{u = v}}
\label{subsec:equal-input-specialization}

We now specialize the ordered pair to \((u,u)\).  In this case the two
length-one summands of \(U\) coincide, so throughout this subsection
the specialization of \eqref{eq:U-def} is
\[
U=O_n^\circ(u,V)+\sum_{j\ge0}O_n^\circ(u_j u,V).
\]

The reduction comes from interchanging the two modes in the vacuum word.  The
following triangular form is obtained by applying
Lemma~\ref{lem:two-iterate-word-Rmk-in-generators} with \(q=-1\) and \(v=u\).

\begin{lem}[Interchanging the two modes]
\label{lem:equal-input-mode-interchange}
Let \(m\ge k\ge0\) and \(r\ge1\).  Then
\begin{equation}\label{eq:equal-input-swapped-expansion}
\begin{aligned}
&\bigl(u_{-1}u_{-r}\mathbf1\bigr)R^n_{m,k}(\cdot)\\
&\qquad=(-1)^{r-1}(u_{-r}u)R^n_{m,k}(\cdot)\\
&\qquad\quad+\sum_{s=1}^{r-1}\sum_{h=0}^{r-s}
(-1)^{s-1}
\Biggl(
\sum_{a=0}^{r-s-h}(-1)^a
\binom{n+k+2\wt u+r-1}{a}\\
&\hspace{38mm}\cdot
\binom{m+2n+r-s+1-a}{r-s-a}
\binom{r-s-a}{h}
\Biggr)(u_{-s}u)R^n_{m+h,k}(\cdot).
\end{aligned}
\end{equation}
In particular, every term in the sum has \(s<r\) and satisfies
\[
(m+h)+s\le m+r.
\]
\end{lem}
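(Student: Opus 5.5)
The plan is to specialize Lemma~\ref{lem:two-iterate-word-Rmk-in-generators} to $q=-1$ and $v=u$, so that the left side is exactly $(u_{-1}u_{-r}\mathbf1)R^n_{m,k}(\cdot)$. Then I would evaluate the coefficients term by term: first the upper binomial in the inner sum, then the prefactor $\binom{q}{s-1}$, and finally the boundary term $s=r$, which should produce the leading term.

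\emph{Step 1: substitution.} With $q=-1$ and $\wt v=\wt u$, the upper entry $n+k+\wt u+\wt v-q+r-2$ becomes $n+k+2\wt u+r-1$, which matches \eqref{eq:equal-input-swapped-expansion}. The singular mode $q-s+1$ becomes $-s$, so each input vector is $u_{-s}u$. The prefactor is
\[
\binom{-1}{s-1}=(-1)^{s-1},
\]
by the generalized binomial convention \eqref{eq:how-to-expand}. This accounts for every sign and coefficient in the displayed sum over $1\le s\le r-1$.

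\emph{Step 2: the term $s=r$.} Here $r-s=0$, so only $h=0$ and $a=0$ occur. The bracket is then
\[
\binom{n+k+2\wt u+r-1}{0}\binom{m+2n+1}{0}\binom{0}{0}=1.
\]
With the prefactor $(-1)^{r-1}$ this gives the leading term $(-1)^{r-1}(u_{-r}u)R^n_{m,k}(\cdot)$, since $m+h=m$.

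\emph{Step 3: the final claim.} In the remaining sum we have $1\le s\le r-1$, hence $s<r$. We also have $0\le h\le r-s$, hence $(m+h)+s\le m+r$. Each shifted index still satisfies $m+h\ge k$, so every residue on the right is admissible.

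The main obstacle is only bookkeeping. I must check that the cited lemma's upper entry uses $\wt u+\wt v-q$ with $q=-1$, contributing $+1$, so that $r-2+1=r-1$. I must also confirm that the $s=r$ term is the only one that is separated off. No identity beyond the lemma itself is needed.
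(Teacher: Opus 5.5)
Your proposal is correct and matches the paper's proof: both apply Lemma~\ref{lem:two-iterate-word-Rmk-in-generators} with \(q=-1\) and \(v=u\), use \(\binom{-1}{s-1}=(-1)^{s-1}\), and separate off the \(s=r\) term, where only \(h=a=0\) occur. In both arguments the bound \((m+h)+s\le m+r\) then follows directly from the summation range \(0\le h\le r-s\).
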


\begin{proof}
Apply Lemma~\ref{lem:two-iterate-word-Rmk-in-generators} with \(q=-1\)
and \(v=u\).  Since \(\binom{-1}{s-1}=(-1)^{s-1}\), its displayed
coefficient is exactly the coefficient in
\eqref{eq:equal-input-swapped-expansion}.  The only term with \(s=r\)
has \(h=0\), and its coefficient is \((-1)^{r-1}\).  All remaining terms
have \(s<r\), and the summation range gives
\((m+h)+s\le m+r\).
\end{proof}

For example, the first nontrivial case is
\begin{equation}\label{eq:equal-input-r2-example}
\begin{aligned}
\bigl(u_{-1}u_{-2}\mathbf1\bigr)R^n_{m,k}(\cdot)
={}&-(u_{-2}u)R^n_{m,k}(\cdot)\\
& +(m+n-k-2\wt u+1)(u_{-1}u)R^n_{m,k}(\cdot)\\
& +(m+2n+2)(u_{-1}u)R^n_{m+1,k}(\cdot).
\end{aligned}
\end{equation}

\begin{prop}[Odd-mode spanning sets when \(u=v\)]
\label{prop:equal-input-odd-spanning}
For homogeneous \(u\in V\), the following two families span
\(\mathcal Q_n((u,u))\):
\begin{align}
\mathrm{(A)}_{\mathrm{odd}}
&:=\left\{
(u_{-r}u)R^n_{m,0}(\cdot):
n\le m\le2n-1,\quad
1\le r\le2n-m,\quad r\ \text{odd}
\right\},
\label{eq:A-odd-strip}\\
\mathrm{(B)}_{\mathrm{odd}}
&:=\left\{
(u_{-r}u)R^n_{m,n}(\cdot):
n\le m\le2n-1,\quad
1\le r\le2n-m,\quad r\ \text{odd}
\right\}.
\label{eq:B-odd-strip}
\end{align}
Each family has
\[
\left\lfloor\frac{(n+1)^2}{4}\right\rfloor
\]
members.  Consequently,
\begin{equation}\label{eq:equal-input-dimension-upper-bound}
\dim\mathcal Q_n((u,u))
\le
2\left\lfloor\frac{(n+1)^2}{4}\right\rfloor.
\end{equation}
\end{prop}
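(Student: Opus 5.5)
We build on Proposition~\ref{prop:empirical-rank} with $v=u$: the images of $\mathrm{(A)}\cup\mathrm{(B)}$ already span $\mathcal Q_n((u,u))$. It therefore suffices to show that every member with even descendant order $r$ lies in the span of members with smaller $r$ (hence of odd members) modulo $U$. We also need the elementary count that each odd family has $\lfloor (n+1)^2/4\rfloor$ members.

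\emph{Step 1: the interchange relation.} The key input is the vacuum-word identity $u_{-1}u_{-r}\mathbf1$ versus $u_{-r}u_{-1}\mathbf1=u_{-r}u$. By skew symmetry, or equivalently by the commutator formula $[u_{-1},u_{-r}]=\sum_{j\ge0}\binom{-1}{j}(u_ju)_{-1-r-j}$ applied to $\mathbf1$, the difference $u_{-1}u_{-r}\mathbf1-u_{-r}u_{-1}\mathbf1$ is a combination of $(u_ju)_{-1-r-j}\mathbf1$ with $j\ge0$. These are vacuum descendants of singular iterates. By Lemma~\ref{lem:descendant-circle-subspace-inclusion}, their residues lie in $O_n^\circ(u_ju,V)\subseteq U$. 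Hence
\[
(u_{-1}u_{-r}\mathbf1)R^n_{m,k}(\cdot)\equiv_U (u_{-r}u)R^n_{m,k}(\cdot).
\]

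\emph{Step 2: comparison with Lemma~\ref{lem:equal-input-mode-interchange}.} Inserting this congruence into \eqref{eq:equal-input-swapped-expansion} gives
\[
\bigl(1-(-1)^{r-1}\bigr)(u_{-r}u)R^n_{m,k}\equiv_U\sum_{s<r,\ h}c_{s,h}\,(u_{-s}u)R^n_{m+h,k}.
\]
For even $r$ the left coefficient is $2$, so $(u_{-r}u)R^n_{m,k}$ reduces to terms with order $s<r$ and $(m+h)+s\le m+r$.

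\emph{Step 3: descent within the spanning list.} The shifted terms $(u_{-s}u)R^n_{m+h,k}$ need not lie in $\mathrm{(A)}$ or $\mathrm{(B)}$. Each such term, however, is a pair-regular residue operator, so it lies in $\mathcal Q_n((u,u))$ and is again a combination of $\mathrm{(A)}\cup\mathrm{(B)}$ by Proposition~\ref{prop:empirical-rank}. This is the main obstacle: re-expanding could reintroduce even-order seeds with large $r$, which would make the descent circular. To control it, we induct on the maximal even order present and track a triangularity property of the reduction to seeds.

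The cleanest route avoids the re-expansion altogether by working inside the span $W$ of $\mathrm{(A)}\cup\mathrm{(B)}$ filtered by $r$. First, we observe that the coefficient expansion in the proof of Proposition~\ref{prop:empirical-rank} is triangular in $r$. Concretely, by Lemma~\ref{lem:formal-weight-removal}, the column change in $r$ is unitriangular. The transformed entries \eqref{eq:formal-weight-free-entry} depend on $s=r-1$ only through $\binom{2n-q}{s}$ and $\binom{k-n+s}{\cdot}$, which suggests that a residue with order $\le R$ is expressed through seeds with $r\le R$. Granting this, we show by induction on even $r$ that each even seed lies in the span of seeds of smaller order: its Step~2 reduction involves only orders $s<r$, whose seed expansions involve orders $<r$. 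It follows that the odd members span.

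If this triangularity fails, the fallback is to verify the rank statement at the coefficient level using the involution $J$. The image of the even seeds under $(1+J)/2$-type relations would then be checked to be dependent on the odd ones, mirroring Proposition~\ref{prop:equal-input-formal-basis}.

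\emph{Step 4: counting.} For $m=n+j$ with $0\le j\le n-1$, the number of odd $r$ in $[1,n-j]$ is $\lceil (n-j)/2\rceil$. Summing over $j$ gives $\sum_{\ell=1}^n\lceil \ell/2\rceil=\lfloor (n+1)^2/4\rfloor$. The same count holds for $\mathrm{(B)}_{\mathrm{odd}}$, which yields \eqref{eq:equal-input-dimension-upper-bound}.
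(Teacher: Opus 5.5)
Your Steps~1, 2 and~4 coincide with the paper's proof: the commutator formula and Lemma~\ref{lem:descendant-circle-subspace-inclusion} give $(u_{-r}u)R^n_{m,k}\equiv_U(u_{-1}u_{-r}\mathbf1)R^n_{m,k}$. Combining this with \eqref{eq:equal-input-swapped-expansion} gives the parity reduction \eqref{eq:equal-input-parity-reduction}, and your count is correct. The gap is in Step~3, which rests on a false premise. For a seed being reduced you have $n\le m$, $m+r\le 2n$ and $k\in\{0,n\}$. Every term $(u_{-s}u)R^n_{m+h,k}$ on the right has $h\ge0$ and $s\ge1$, and, as you noted yourself in Step~2, $(m+h)+s\le m+r\le 2n$. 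Hence $n\le m+h\le 2n-1$ and $1\le s\le 2n-(m+h)$, with the same $k$. So each such term is itself a member of the same family $\mathrm{(A)}$ or $\mathrm{(B)}$, of smaller descendant order. Induction on $r$ inside the finite family therefore reduces $\mathrm{(A)}$ to $\mathrm{(A)}_{\mathrm{odd}}$ and $\mathrm{(B)}$ to $\mathrm{(B)}_{\mathrm{odd}}$ directly, and Proposition~\ref{prop:empirical-rank} finishes the argument. No re-expansion through the seeds is needed, so the circularity you worried about never arises.

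As written, your argument does not close. Your replacement route needs a triangularity property of the seed expansion: a residue of order at most $R$ should expand through seeds of order at most $R$. You only say this is ``suggested'' by \eqref{eq:formal-weight-free-entry}, and then you grant it without proof. The fallback via $J$ would not supply the missing step either. $J$ transposes coefficient coordinates on the regular square and is not an operator identity, since $u_au_bz\neq u_bu_az$ in general. Moreover, identities on the regular square do not control the full expansion \eqref{eq:prod-C-expansion-mk}. Indeed, Proposition~\ref{prop:equal-input-formal-basis} obtains its spanning from the operator reduction \eqref{eq:equal-input-parity-reduction} read formally, not the other way around. The only operator-level input is the actual mode interchange of your Steps~1--2. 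The closure observation above is the step you are missing.
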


\begin{proof}
The mode commutator formula gives
\[
u_{-r}u_{-1}\mathbf1-u_{-1}u_{-r}\mathbf1
=[u_{-r},u_{-1}]\mathbf1
=\sum_{j\ge0}\binom{-r}{j}
(u_j u)_{-r-j-1}\mathbf1.
\]
Every state on the right is a vacuum descendant of a singular iterate
\(u_j u\).  Lemma~\ref{lem:descendant-circle-subspace-inclusion} therefore
shows that, after applying any \(R^n_{m,k}\), its value lies in \(U\).  Hence
\begin{equation}\label{eq:equal-input-commute-mod-U}
(u_{-r}u)R^n_{m,k}(\cdot)
\equiv_U
\bigl(u_{-1}u_{-r}\mathbf1\bigr)R^n_{m,k}(\cdot).
\end{equation}
Combining this congruence with
\eqref{eq:equal-input-swapped-expansion} gives
\begin{equation}\label{eq:equal-input-parity-reduction}
\begin{aligned}
&\bigl(1-(-1)^{r-1}\bigr)(u_{-r}u)R^n_{m,k}(\cdot)\\
&\qquad\equiv_U
\sum_{s=1}^{r-1}\sum_{h=0}^{r-s}
(-1)^{s-1}
\Biggl(
\sum_{a=0}^{r-s-h}(-1)^a
\binom{n+k+2\wt u+r-1}{a}\\
&\hspace{38mm}\cdot
\binom{m+2n+r-s+1-a}{r-s-a}
\binom{r-s-a}{h}
\Biggr)(u_{-s}u)R^n_{m+h,k}(\cdot).
\end{aligned}
\end{equation}
If \(r\) is even, the coefficient on the left is \(2\).  Thus every
even-\(r\) term is a linear combination, modulo \(U\), of terms with smaller
descendant order \(s<r\).

Suppose now that \(n\le m\le2n-1\) and \(m+r\le2n\).  Every term on the
right of \eqref{eq:equal-input-parity-reduction} satisfies
\[
m+h\ge n,
\qquad
(m+h)+s\le m+r\le2n,
\]
so it is again a member of the same family \(\mathrm{(A)}\) or \(\mathrm{(B)}\).
Induction on \(r\) therefore reduces family \(\mathrm{(A)}\) to
\(\mathrm{(A)}_{\mathrm{odd}}\), and family \(\mathrm{(B)}\) to
\(\mathrm{(B)}_{\mathrm{odd}}\).  Proposition~\ref{prop:empirical-rank}
then proves the spanning statement.

At fixed \(m\), there are \(\lceil(2n-m)/2\rceil\) odd
integers between \(1\) and \(2n-m\).  Hence each reduced family has
\[
\sum_{m=n}^{2n-1}\left\lceil\frac{2n-m}{2}\right\rceil
=\left\lfloor\frac{(n+1)^2}{4}\right\rfloor
\]
members, proving \eqref{eq:equal-input-dimension-upper-bound}.
\end{proof}

We next record what the same reduction does to the formal coefficient space.
This is the appropriate formal model for equal inputs; it is smaller than the
ordered model of Definition~\ref{def:universal-prod-reg-space}.

The formal version of interchange is simpler: it exchanges scalar
coefficient coordinates. On the ambient finite space \(\mathcal F_n(u,u)\),
define
\[
J([u_a u_b]):=[u_b u_a],\qquad -2n-1\le a,b\le-1.
\]
The following lemma identifies its
restriction to the coefficient space with the same triangular formula
used in \eqref{eq:equal-input-swapped-expansion}.

\begin{lem}[The interchange \(J\) is an involution]
\label{lem:equal-input-J-involution}
On the ambient space \(\mathcal F_n(u,u)\),
\begin{equation}\label{eq:equal-input-J-involution}
J^2=1.
\end{equation}
Moreover \(J\) preserves the regular coefficient space
$\mathcal S_{\mathrm{prod,reg}}(n;u,u)$ and, separately, the spans
of $\mathcal A$ and $\mathcal B$, and for \(m\ge k\ge0\) and \(r\ge1\) it acts by
\begin{equation}\label{eq:equal-input-J-def}
\begin{aligned}
&J\bigl(\Gamma^{(n)}_{m,k,r}(u,u)\bigr)\\
&\qquad=(-1)^{r-1}\Gamma^{(n)}_{m,k,r}(u,u)\\
&\qquad\quad+
\sum_{s=1}^{r-1}\sum_{h=0}^{r-s}(-1)^{s-1}
\Biggl(\sum_{j=0}^{r-s-h}(-1)^j
\binom{n+k+2\wt u+r-1}{j}\\
&\hspace{36mm}\cdot
\binom{m+2n+r-s+1-j}{r-s-j}
\binom{r-s-j}{h}\Biggr)
\Gamma^{(n)}_{m+h,k,s}(u,u).
\end{aligned}
\end{equation}
In particular \(J\) restricts to an involution of each of these subspaces.
\end{lem}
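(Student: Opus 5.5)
The involution \(J^2=1\) on \(\mathcal F_n(u,u)\) is immediate: \(J\) permutes the basis vectors \([u_au_b]\) by transposing the two indices, and transposing twice is the identity. The substance is the formula \eqref{eq:equal-input-J-def}. I plan to prove it at the level of full scalar arrays, by comparing the product piece of \(u_{-1}u_{-r}\mathbf1\) with that of \(u_{-r}u\). Once the formula holds, invariance of the three subspaces and the restricted involution property follow formally.

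\emph{Step 1: the product piece of the swapped word.} For the word \(u_{-1}u_{-r}\mathbf1\), Definition~\ref{def:word-pieces} (with \(s^{(1)}=u\), \(r_1=1\), \(s^{(2)}=u\), \(r_2=r\)) gives a product piece
\[
\Res_x\frac{(1+x)^{\wt w+n+k}}{x^{m+2n+2}}Y_L(u_{-r}\mathbf1,x)Y_L(u,x)z .
\]
Repeating the proof of Proposition~\ref{prop:prod-master-general-mk}, its coefficient on \(u_bu_az\) is \(C^{(n)}_{m,k}(r;a,b)\). This is the same scalar that multiplies \(u_au_bz\) in the product piece of \(u_{-r}u\). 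Hence the full coefficient array of the swapped word is the index transpose of \(C^{(n)}_{m,k}(r;\cdot,\cdot)\), and on the regular square it is \(J\Gamma^{(n)}_{m,k,r}(u,u)\).

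\emph{Step 2: expanding the swapped word.} Lemma~\ref{lem:equal-input-mode-interchange} expresses \((u_{-1}u_{-r}\mathbf1)R^n_{m,k}\) exactly, as an identity in \(V\), as the displayed combination of the residues \((u_{-s}u)R^n_{m+h,k}\). I want the same linear relation among product pieces, that is, among scalar arrays. Working modulo \(U\) does not suffice for this, so I would redo the derivation of \cite[Lemma~4.12]{Part1} at the field level. The argument there rewrites \(Y(u_{-1}u_{-r}\mathbf1,x)\) through derivatives in \(x\) and powers of \(1+x\) and then integrates by parts against the kernel. I expect the same manipulations to apply verbatim to the derivative-field products \(Y_L(u_{-r}\mathbf1,x)Y_L(u,x)\). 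The reason is that \eqref{eq:descendant-derivative-convention} defines these as honest \(x\)-derivatives of \(Y_L(u,x)\), and the kernel identity underlying the lemma is purely scalar, concerning \((1+x)^{\cdot}x^{-\cdot}\) and \(\partial_x\).

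This gives the array identity: \(C^{(n)}_{m,k}(r;b,a)\) equals \((-1)^{r-1}C^{(n)}_{m,k}(r;a,b)\) plus the displayed combination of the arrays \(C^{(n)}_{m+h,k}(s;a,b)\). Restricting to the regular square yields \eqref{eq:equal-input-J-def}. This step is the main obstacle. If the field-level integration by parts proves awkward, I would fall back on verifying the identity directly in the \(\Phi\)-form \eqref{eq:Gamma-discrete2}, using the generating function of Lemma~\ref{lem:Phi-y-packaging}. In that form the transpose corresponds to swapping the roles of \(\Phi_{\cdot,1,u}\) and \(\Phi_{\cdot,r,u}\), and the binomial coefficients should come from a coefficient extraction in \(y\).

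\emph{Step 3: invariance.} By \eqref{eq:equal-input-J-def}, \(J\) maps each generator \(\Gamma^{(n)}_{m,k,r}\) into \(\mathcal S_{\mathrm{prod,reg}}(n;u,u)\), so it preserves that space. For \(\mathcal A\): if \(n\le m\le 2n-1\), \(k=0\) and \(m+r\le 2n\), then every term on the right has \(m+h\ge n\), \(s<r\) and \((m+h)+s\le m+r\le 2n\), by Lemma~\ref{lem:equal-input-mode-interchange}. So each term is again a seed of \(\mathcal A\), and \(\operatorname{span}\mathcal A\) is invariant. The same argument with \(k=n\) handles \(\mathcal B\). Since \(J^2=1\) on the ambient space, its restriction to each invariant subspace is an involution.
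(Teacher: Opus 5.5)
Your Steps~1 and~3 are correct. In Step~1, the coefficient of $u_au_bz$ in the product piece of $u_{-1}u_{-r}\mathbf1$ is $\Res_x K_r\,p_{a,r,u}\,p_{b,1,u}=C^{(n)}_{m,k}(r;b,a)$, with $K_r(x)=(1+x)^{2\wt u+r-1+n+k}x^{-m-2n-2}$. Step~3 is the paper's invariance argument.

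Step~2, however, carries the whole content of the lemma, and there you only state an expectation. The transfer is also not literally verbatim. The exact identity behind Lemma~\ref{lem:equal-input-mode-interchange} is $u_{-1}u_{-r}\mathbf1=\sum_{s=1}^{r}(-1)^{s-1}L(-1)^{r-s}u_{-s}u/(r-s)!$. It uses $[L(-1),u_q]=-qu_{q-1}$ in $V$ and has no meaning for ordered $Y_L$-products. Its replacement is the scalar identity
\[
\frac{f^{(r-1)}}{(r-1)!}\,g=\sum_{s=1}^{r}\frac{(-1)^{s-1}}{(r-s)!}\,\partial_x^{\,r-s}\Bigl(f\,\frac{g^{(s-1)}}{(s-1)!}\Bigr)
\]
for Laurent series $f,g$. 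On the right, the coefficient of $f^{(r-1-t)}g^{(t)}$ is $\frac{1}{t!\,(r-1-t)!}\sum_{i=0}^{t}(-1)^{i}\binom{t}{i}=\delta_{t,0}/(r-1)!$.

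To close Step~2:
\begin{enumerate}[label=(\alph*)]
\item Apply this identity with $f=p_{a,1,u}$ and $g=p_{b,1,u}$ for each fixed $(a,b)$. Do not apply it to the operator product acting on $z$, which would only give a relation among vectors of $V$.
\item Integrate by parts $r-s$ times against $K_r$.
\item Expand $\partial_x^{r-s}K_r/(r-s)!$ as a combination of $K_s\bigl((1+x)/x\bigr)^{r-s-j}$, where $K_s$ is the same kernel with $r$ replaced by $s$ and a factor $x^{-h}$ shifts $m$ to $m+h$.
\end{enumerate}
The signs combine to $(-1)^{s-1}(-1)^{r-s}(-1)^{r-s-j}=(-1)^{s-1+j}$, and you obtain exactly \eqref{eq:equal-input-J-def}. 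With this inserted your proof is complete, and the $\Phi$-form fallback is unnecessary.

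The paper's proof goes the other way round. It writes $C^{(n)}_{m,k}(r;b,a)=\Res_xK_r\,p_{b,1,u}\,\partial_x^{r-1}p_{a,1,u}/(r-1)!$ and integrates by parts all $r-1$ derivatives at once. It then applies the ordinary Leibniz rule to $\partial_x^{r-1}(K_rp_{b,1,u})$ and reaches the same kernel expansion. This avoids both the swapped word and the inverted Leibniz identity above, and it does not depend on how Part~I proves its last-mode reduction (Lemma~4.12 there).

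Your framing buys something the paper only observes. The transposed array is literally the product piece of $u_{-1}u_{-r}\mathbf1$, so $J$ is the product-piece shadow of mode interchange. That explains why its coefficients coincide with those of Lemma~\ref{lem:equal-input-mode-interchange}, and why the parity reduction may be read formally in Proposition~\ref{prop:equal-input-formal-basis}.
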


\begin{proof}
\emph{The involution.} \(J\) exchanges the basis symbols \([u_au_b]\) and
\([u_bu_a]\) of \(\mathcal F_n(u,u)\), so applying it twice is the identity.
This is \eqref{eq:equal-input-J-involution}; it holds on every invariant
subspace once invariance is shown. The rest of the proof establishes
\eqref{eq:equal-input-J-def}, from which invariance follows.

\emph{Step 1: the coefficients of \(J(\Gamma)\).} By
\eqref{eq:Gamma-prod-reg-def}, the coefficient of \([u_au_b]\) in
\(J(\Gamma^{(n)}_{m,k,r}(u,u))\) is \(C^{(n)}_{m,k}(r;b,a)\); the regular square
is symmetric under \(a\leftrightarrow b\). Write
\[
K_r(x):=\frac{(1+x)^{2\wt u+r-1+n+k}}{x^{m+2n+2}}.
\]
By the first display in the proof of
Proposition~\ref{prop:prod-master-general-mk}, with \(v=u\), and by
\eqref{eq:pjmu-def}, for every retained pair \(a,b\),
\[
C^{(n)}_{m,k}(r;b,a)
=\Res_x K_r(x)\,p_{b,1,u}(x)\,
\frac{\partial_x^{r-1}p_{a,1,u}(x)}{(r-1)!},
\]
whereas \(C^{(n)}_{m,k}(s;a,b)\) has the derivative of order \(s-1\) on
\(p_{b,1,u}\) instead. The proof moves the derivatives across.

\emph{Step 2: integration by parts.} The residue of a formal derivative of
a Laurent series is zero, so \(\Res_x f\,\partial_xg=-\Res_x(\partial_xf)\,g\).
Applied \(r-1\) times,
\[
C^{(n)}_{m,k}(r;b,a)
=\frac{(-1)^{r-1}}{(r-1)!}
\Res_x p_{a,1,u}(x)\,\partial_x^{r-1}
\bigl(K_r(x)p_{b,1,u}(x)\bigr).
\]

\emph{Step 3: Leibniz rule.} Since
\(\partial_x^{r-1}(fg)/(r-1)!=\sum_{s=1}^{r}\frac{\partial_x^{r-s}f}{(r-s)!}
\frac{\partial_x^{s-1}g}{(s-1)!}\),
\[
C^{(n)}_{m,k}(r;b,a)
=(-1)^{r-1}\sum_{s=1}^{r}\Res_x p_{a,1,u}(x)\,
\frac{\partial_x^{r-s}K_r(x)}{(r-s)!}\,
\frac{\partial_x^{s-1}p_{b,1,u}(x)}{(s-1)!}.
\]

\emph{Step 4: the kernel.} By Leibniz's rule and
\(\frac1{p!}\partial_x^px^{-B}=(-1)^p\binom{B+p-1}{p}x^{-B-p}\),
\[
\begin{aligned}
\frac{\partial_x^{r-s}K_r(x)}{(r-s)!}
={}&\sum_{j=0}^{r-s}(-1)^{r-s-j}
\binom{2\wt u+r-1+n+k}{j}
\binom{m+2n+r-s+1-j}{r-s-j}\\
&\qquad\cdot\frac{(1+x)^{2\wt u+r-1+n+k-j}}
{x^{m+2n+2+r-s-j}}.
\end{aligned}
\]
Since \(2\wt u+r-1+n+k-j=(2\wt u+s-1+n+k)+(r-s-j)\), the \(j\)-term is
\(K_s(x)\) times
\[
\left(\frac{1+x}{x}\right)^{r-s-j}
=\sum_{h=0}^{r-s-j}\binom{r-s-j}{h}x^{-h},
\]
and multiplying \(K_s\) by \(x^{-h}\) replaces \(m\) by \(m+h\). Each term
is therefore a coefficient \(C^{(n)}_{m+h,k}(s;a,b)\). The signs combine to
\((-1)^{r-1}(-1)^{r-s-j}=(-1)^{s-1+j}\). Exchanging the \(j\)- and
\(h\)-sums gives \eqref{eq:equal-input-J-def}, including its diagonal term
\(s=r\), \(h=j=0\). No assumption on the sign of \(\wt u\) is used.

\emph{Step 5: invariance.} Every term on the right of
\eqref{eq:equal-input-J-def} lies in the coefficient space, since
$m+h\ge k$. For a seed in \(\mathcal A\) or \(\mathcal B\), its indices
satisfy \(m\ge n\) and \(m+r\le2n\). The terms on the right satisfy
\(m+h\ge n\) and \((m+h)+s\le m+r\le2n\), and keep the same \(k\).
Thus the two seed spans are separately preserved.
\end{proof}

\begin{defn}[Coefficient space for equal inputs]
\label{def:equal-input-coefficient-space}
Define
\begin{equation}\label{eq:equal-input-coefficient-space}
\mathcal S_{\mathrm{eq}}(n;u)
:=
\mathcal S_{\mathrm{prod,reg}}(n;u,u)
\big/
\operatorname{im}(1-J).
\end{equation}
Thus the formal coefficient space for \(u=v\) imposes exactly the relations
forced by interchanging the two identical vacuum factors.
\end{defn}

\begin{prop}[Formal independence after setting \(u=v\)]
\label{prop:equal-input-formal-basis}
The images in \(\mathcal S_{\mathrm{eq}}(n;u)\) of the formal seeds indexed by
\(\mathrm{(A)}_{\mathrm{odd}}\cup\mathrm{(B)}_{\mathrm{odd}}\) form a basis.
In particular,
\begin{equation}\label{eq:equal-input-formal-dimension}
\dim\mathcal S_{\mathrm{eq}}(n;u)
=2\left\lfloor\frac{(n+1)^2}{4}\right\rfloor.
\end{equation}
\end{prop}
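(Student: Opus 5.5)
The plan is to reduce everything to the action of $J$ on the two seed spans separately. By Proposition~\ref{prop:AB-prodreg-full-rank}, $\mathcal A\cup\mathcal B$ is a basis of $\mathcal S_{\mathrm{prod,reg}}(n;u,u)$, so
\[
\mathcal S_{\mathrm{prod,reg}}(n;u,u)=\Span\mathcal A\oplus\Span\mathcal B .
\]
By Lemma~\ref{lem:equal-input-J-involution}, $J$ preserves each summand. Hence $\operatorname{im}(1-J)$ is the direct sum of $(1-J)\Span\mathcal A$ and $(1-J)\Span\mathcal B$. The quotient $\mathcal S_{\mathrm{eq}}(n;u)$ therefore splits as
\[
\Span\mathcal A/(1-J)\Span\mathcal A\;\oplus\;\Span\mathcal B/(1-J)\Span\mathcal B .
\]
It then suffices to show that, in each summand, the images of the odd-$r$ seeds form a basis.

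\emph{Triangularity of $J$.} Fix the family $\mathcal A$; the argument for $\mathcal B$ is identical. Order its seeds $\Gamma^{(n)}_{m,0,r}$ by increasing $r$. Formula \eqref{eq:equal-input-J-def} gives
\[
J\Gamma^{(n)}_{m,0,r}=(-1)^{r-1}\Gamma^{(n)}_{m,0,r}+\sum c\,\Gamma^{(n)}_{m+h,0,s},
\]
where every term in the sum has $s<r$. By Step~5 of that lemma, each such term is again a seed of $\mathcal A$, because $m+h\ge n$ and $(m+h)+s\le 2n$. So in this ordered basis the matrix of $J$ is triangular, with diagonal entries $(-1)^{r-1}$.

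\emph{Counting the quotient.} Since $J^2=1$, $J$ is diagonalizable with eigenvalues $\pm1$. Its eigenvalue multiplicities can be read off from the diagonal of the triangular matrix. Moreover, $\operatorname{im}(1-J)$ is exactly the $(-1)$-eigenspace. Therefore
\[
\dim\bigl(\Span\mathcal A/(1-J)\Span\mathcal A\bigr)=\dim\ker(1-J)=\#\{\text{seeds in }\mathcal A\text{ with }r\text{ odd}\}=\left\lfloor\frac{(n+1)^2}{4}\right\rfloor,
\]
where the last count is the one already computed in Proposition~\ref{prop:equal-input-odd-spanning}.

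\emph{Spanning by odd seeds.} Induct on $r$. For even $r$,
\[
(1-J)\Gamma^{(n)}_{m,0,r}=2\Gamma^{(n)}_{m,0,r}-\sum c\,\Gamma^{(n)}_{m+h,0,s}\qquad(s<r),
\]
so modulo $\operatorname{im}(1-J)$ the seed $\Gamma^{(n)}_{m,0,r}$ is a combination of seeds in the same family with smaller descendant order. Repeating this shows that the odd-$r$ seeds span the quotient. Since their number equals the quotient dimension just computed, they form a basis. Adding the two summands gives $2\lfloor (n+1)^2/4\rfloor$.

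The main point requiring care is the triangularity claim. One must check that every term $\Gamma^{(n)}_{m+h,k,s}$ produced by $J$ is literally a seed of the same family, rather than merely an element of its span; this is what makes reading eigenvalue multiplicities off the diagonal legitimate. The index bounds in Step~5 of Lemma~\ref{lem:equal-input-J-involution} supply exactly this, since $k$ is unchanged, $m+h\ge n$, and $(m+h)+s\le m+r\le 2n$.
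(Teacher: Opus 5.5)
Your proposal is correct and follows the paper's proof. Both arguments use the same steps: $J$ is triangular with diagonal entries $(-1)^{r-1}$ on the seed bases ordered by $r$; $J^2=1$ then gives $\dim\ker(1-J)$, and hence the quotient dimension, as the number of odd-$r$ seeds; and the even-$r$ reduction shows the odd seeds span. Splitting first as $\Span\mathcal A\oplus\Span\mathcal B$ is only a presentational refinement of the paper's separate ordering of each basis.
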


\begin{proof}
Order each of the bases \(\mathcal A\) and \(\mathcal B\) by increasing
descendant order \(r\).  Equation~\eqref{eq:equal-input-J-def} is triangular in
that order, with diagonal entry \((-1)^{r-1}\) at a seed of descendant order
\(r\).  Since \(J^2=1\), the map \(J\) is diagonalizable with eigenvalues
\(1\) and \(-1\).  Its \(1\)-eigenspace therefore has dimension equal to the
number of odd-\(r\) diagonal entries, namely
\[
2\left\lfloor\frac{(n+1)^2}{4}\right\rfloor.
\]
Moreover,
\[
\mathcal S_{\mathrm{prod,reg}}(n;u,u)
=\ker(1-J)\oplus\operatorname{im}(1-J),
\]
so the quotient in \eqref{eq:equal-input-coefficient-space} has that same
dimension.  The even-\(r\) reduction
\eqref{eq:equal-input-parity-reduction}, read formally, shows that the
odd-\(r\) classes span the quotient.  Their number equals its dimension, so
they are a basis.
\end{proof}

\begin{cor}[Global quadratic spanning and map count]
\label{cor:global-quadratic-bound}
Let $n\ge1$ and let $S$ be a finite homogeneous strong generating set,
with $\ell=|S|$. Fix an ordering of $S$. Modulo
$O_n^\circ(2,S,V)^+$, the space $O_n^\circ(2,S,V)$ is spanned by the
values of
\begin{equation}\label{eq:global-quadratic-spanning-list}
(s_{-r}t)R^n_{m,k}(\cdot),\qquad
s\le t,\quad n\le m\le2n-1,\quad1\le r\le2n-m,\quad k\in\{0,n\},
\end{equation}
with only odd $r$ needed when $s=t$. Equivalently, the space
\begin{equation}\label{eq:global-plus-induced-space}
\begin{aligned}
\mathcal Q_n(2,S):=\Span_\C\bigl\{&z\longmapsto wR^n_{m,k}z+O_n^\circ(2,S,V)^+:
\ w\in\mathcal P_{\le2}(S),\ m\ge k\ge0\bigr\}\\
&\subseteq\Hom\bigl(V,V/O_n^\circ(2,S,V)^+\bigr)
\end{aligned}
\end{equation}
is spanned by the maps from \eqref{eq:global-quadratic-spanning-list}, and
\begin{equation}\label{eq:global-quadratic-bound}
\dim\mathcal Q_n(2,S)\le\binom{\ell}{2}n(n+1)+2\ell\left\lfloor\frac{(n+1)^2}{4}\right\rfloor.
\end{equation}
This is also the number of listed operators. The reduction coefficients
are independent of the right input $z$.
\end{cor}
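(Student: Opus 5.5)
The plan is to reduce the global statement to the pair computations already done, using Part~I's description of the plus subspace at length two. Since $S$ strongly generates $V$ and every word in $\mathcal P_{\le 2}(S)$ has length one or two, $O_n^\circ(2,S,V)$ is spanned by residues $wR^n_{m,k}z$ with $w$ of length one, or $w=s_q t_{-r}\mathbf1$ with $s,t\in S$. Length-one residues lie in $O_n^\circ(2,S,V)^+$ by definition. The same holds for length-two words with $q\ge0$, since that mode precedes the last letter. So only regular words $s_{q}t_{-r}\mathbf1$ with $q\le-1$ remain.

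First, apply Lemma~\ref{lem:two-iterate-word-Rmk-in-generators}. Every such residue is a combination of residues $(s_{q-j}t)R^n_{m+h,k}z$. Those with $q-j\ge0$ lie in $O_n^\circ(s_it,V)\subseteq O_n^\circ(2,S,V)^+$, so we reduce to the pair generators $(s_{-r}t)R^n_{m,k}$.

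Second, I observe that for each ordered pair $(s,t)$ the local subspace $U$ of \eqref{eq:U-def} is contained in $O_n^\circ(2,S,V)^+$. Its summands are either length-one slices or slices $O_n^\circ(s_it,V)$ with $i\ge0$, which are residues of the word $s_it_{-1}\mathbf1$ with a nonnegative mode before the last letter. This is \cite[Lemma~4.16]{Part1}. Hence any congruence modulo $U$ descends to one modulo $O_n^\circ(2,S,V)^+$, with coefficients independent of $z$.

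Third, I remove one ordering of each distinct pair. The commutator formula used in Proposition~\ref{prop:equal-input-odd-spanning} applies to general $s,t$:
\[
s_{-r}t-t_{-1}s_{-r}\mathbf1=\sum_{j\ge0}\binom{-r}{j}(s_jt)_{-r-j-1}\mathbf1 .
\]
By Lemma~\ref{lem:descendant-circle-subspace-inclusion}, the right side contributes only elements of $O_n^\circ(s_jt,V)$, which lie in the plus subspace. Applying Lemma~\ref{lem:two-iterate-word-Rmk-in-generators} with $q=-1$ to $t_{-1}s_{-r}\mathbf1$ then expresses $(s_{-r}t)R^n_{m,k}$ through residues $(t_{-s'}s)R^n_{m',k}$. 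So for $s>t$ in the fixed ordering, every $(s_{-r}t)R^n_{m,k}$ reduces, modulo the plus subspace, to the ordered pair $(t,s)$ with $t<s$.

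For each pair with $s<t$, Proposition~\ref{prop:empirical-rank} spans all $(s_{-r}t)R^n_{m,k}$ modulo $U$ by (A)$\cup$(B), which are exactly the operators listed with $k\in\{0,n\}$. For $s=t$, Proposition~\ref{prop:equal-input-odd-spanning} reduces this to odd $r$.

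Counting gives the bound. There are $\binom{\ell}{2}$ unordered distinct pairs, each contributing $n(n+1)$ maps. There are $\ell$ diagonal pairs, each contributing $2\lfloor (n+1)^2/4\rfloor$ maps. This yields \eqref{eq:global-quadratic-bound}, which is also the number of listed operators. The equivalence with the statement about $\mathcal Q_n(2,S)$ is then immediate from the definition.

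The main obstacle I expect is the cross-pair swap step. One must check that the interchange really lands modulo $O_n^\circ(2,S,V)^+$, with descendant slices of $s_jt$ rather than of $t_js$. One must also check that it produces only residues $(t_{-s'}s)R^n_{m',k}$ with $m'\ge k$, so that Proposition~\ref{prop:empirical-rank} applies to them. Both follow from the shapes of the displayed formulas, but the bookkeeping needs care.
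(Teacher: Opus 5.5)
Your proposal is correct and follows the paper's proof. The local pair subspaces lie in $O_n^\circ(2,S,V)^+$, so the pair reductions of Propositions~\ref{prop:empirical-rank} and~\ref{prop:equal-input-odd-spanning} descend to the quotient with $z$-independent coefficients, and counting gives the bound. The one difference is the treatment of reversed ordered pairs $(s,t)$ with $s>t$: the paper discards them by citing Lemma~4.16 of Part~I, whereas you carry out the swap explicitly. You use the commutator formula, Lemma~\ref{lem:descendant-circle-subspace-inclusion}, and Lemma~\ref{lem:two-iterate-word-Rmk-in-generators} with $q=-1$, as in the equal-input argument. Your bookkeeping there is sound: the slices of $s_jt$ lie in the plus space for any ordering, and every resulting residue has the form $(t_{-s'}s)R^n_{m+h,k}$ with $m+h\ge k$.
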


\begin{proof}
\cite[Lemma~4.16]{Part1} reduces to one cross-pair for each
unordered pair of distinct generators and one equal-input pair per
generator. For a pair $(s,t)$, its local subspace
\begin{equation}\label{eq:pair-modulus-in-global-plus}
O_n^\circ(s,V)+O_n^\circ(t,V)+\sum_{i\ge0}O_n^\circ(s_i t,V)
\subseteq O_n^\circ(2,S,V)^+
\end{equation}
consists of length-one slices and singular quadratic slices.
Postcomposition with the quotient by the plus subspace therefore
preserves every local pair reduction. Proposition~\ref{prop:empirical-rank}
gives $n(n+1)$ operators per cross-pair, and Proposition
\ref{prop:equal-input-odd-spanning} gives
$2\lfloor(n+1)^2/4\rfloor$ per equal-input pair.
There are $\binom{\ell}{2}$ cross-pairs and $\ell$ equal-input pairs,
which proves the count. The reductions in those propositions have
coefficients independent of $z$, so they prove spanning of the induced
maps as well as spanning of their evaluated values.
\end{proof}

\begin{rem}[Formal and operator statements]\label{rem:formal-versus-operator}
Propositions~\ref{prop:empirical-rank} and~\ref{prop:equal-input-odd-spanning} and
Corollary~\ref{cor:global-quadratic-bound} bound the span of the induced maps from $V$ into
the quotient, not the dimension of the quotient, since each map may have an
infinite-dimensional image.  Propositions~\ref{prop:AB-prodreg-full-rank}
and~\ref{prop:equal-input-formal-basis} concern formal coefficient spaces; the first supplies
the invertible matrices used in the eliminations below and in Part~III.  Formal independence
does not imply that the actual operators are independent
(Remark~\ref{rem:affine-level-one-rank-drop}).
\end{rem}

\subsection{Diagonal elimination for \texorpdfstring{$k=0$}{k = 0}}
\label{subsec:quadratic-diagonal-elimination}

We finish the quadratic analysis by recording two consequences of the
coefficient formulas and the formal rank theorem.  They are stated directly
in terms of the coefficients \(C^{(n)}_{m,0}(r;a,b)\) from
Proposition~\ref{prop:prod-master-general-mk}.

Recall from \eqref{eq:U-def} that, for the fixed ordered pair \((u,v)\),
\[
U=
O_n^\circ(u,V)+O_n^\circ(v,V)
+\sum_{i\ge0}O_n^\circ(u_i v,V).
\]
For use in this subsection, let
\begin{align}
U_A
&:=U+\Span_{\C}\left\{
(u_{-r}v)R^n_{n+p_0,0}z:
0\le p_0\le n-1,\ 1\le r\le n-p_0,\ z\in V
\right\},
\label{eq:UA-diagonal-elimination}\\
U_C
&:=U+\Span_{\C}\left\{
(u_{-r}v)R^n_{m,0}z:
0\le m\le n-1,\ 1\le r\le n-m,\ z\in V
\right\}.
\label{eq:UC-diagonal-elimination}
\end{align}
Thus the strip~\({\rm(A)}\) relations vanish modulo \(U_A\), and the
strip~\({\rm(C)}\) relations vanish modulo \(U_C\).

\begin{lem}[Strip A: deep diagonal elimination]
\label{lem:generaln-stripA-deep-elim-explicit}
Fix \(n\ge1\), homogeneous \(u,v\in V\), and \(z\in V\).  Let
\(K_A^{(n)}\) be the square matrix whose rows are indexed by
\[
(p_0,r),\qquad 0\le p_0\le n-1,\quad 1\le r\le n-p_0,
\]
and whose columns are indexed by
\[
(p,q),\qquad p,q\ge0,\quad p+q\le n-1,
\]
with entries
\begin{equation}\label{eq:KA-direct-C-entry}
K_A^{(n)}\bigl((p_0,r),(p,q)\bigr)
:=
C^{(n)}_{n+p_0,0}
\bigl(r;-2n-1+p,-2n-1+q\bigr).
\end{equation}
Then \(K_A^{(n)}\) is invertible.  For \(p+q\le n-1\) and
\(a,b\ge-2n-1\) with \(a+b\ge-3n-2\), set
\begin{equation}\label{eq:Epq-direct-C-formula}
\begin{aligned}
E^{(n)}_{p,q}(a,b)
:={}&-
\sum_{p_0=0}^{n-1}\ \sum_{r=1}^{n-p_0}
\bigl(K_A^{(n)}\bigr)^{-1}
\bigl((p,q),(p_0,r)\bigr)\\
&\hspace{34mm}\cdot C^{(n)}_{n+p_0,0}(r;a,b).
\end{aligned}
\end{equation}
Then
\begin{equation}\label{eq:deep-elim-generaln}
v_{-2n-1+p}u_{-2n-1+q}z
\equiv_{U_A}
\sum_{\substack{a,b\ge-2n-1\\a+b\ge-3n-2}}
E^{(n)}_{p,q}(a,b)v_a u_b z.
\end{equation}
In particular, every monomial in the deep triangle \(p+q\le n-1\)
is eliminated in favor of monomials on the diagonals
\(a+b\ge-3n-2\).  For fixed \(z\), all displayed sums are finite by lower truncation.
\end{lem}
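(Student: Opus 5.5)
The proof has two parts. First show that $K_A^{(n)}$ is invertible. Then turn the operator congruences coming from strip~A into the elimination formula \eqref{eq:deep-elim-generaln}.

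\emph{Invertibility.} The rows of $K_A^{(n)}$ are the $\mathcal A$ seeds, indexed by $m=n+p_0$ and $r$. The columns are the deep triangle $p+q\le n-1$, which in the notation of Lemma~\ref{lem:formal-weight-removal} are the rows $(p,q)$ of the weight-removal transform. This triangle is downward closed, so the row functionals $\rho_{p,q}$ restricted to it form a unitriangular change of basis. The column combinations \eqref{eq:formal-hat-Gamma} over $r=1,\dots,n-p_0$ are unitriangular on each initial $r$-list. Hence $K_A^{(n)}$ is invertible if and only if the transformed matrix with entries \eqref{eq:formal-weight-free-entry} is invertible there.

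Put $t=p+q$, $i=p_0$ and $s=r-1$. The deep triangle is in bijection with the grid $\{(t,p):0\le t\le n-1,\ 0\le p\le t\}=\mathcal L_n$ of \eqref{eq:formal-low-high-grids}. The transformed entries are the $\mathcal A$-entries of \eqref{eq:formal-ABC-matrix-entries}, namely
\[
(-1)^{i+t+s-n+1}\binom{2n-t+p}{s}\binom{i+t}{n-s-1}.
\]
This is exactly the low $\mathcal A$-block already shown to be invertible in the proof of Proposition~\ref{prop:AB-prodreg-full-rank}, using Lemma~\ref{lem:formal-polynomial-grids} with $c=0$. The one point to check carefully is that the sign factors into a row sign and a column sign, and that the index ranges match. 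Here $i+s\le n-1$ corresponds to $1\le r\le n-p_0$.

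\emph{Elimination.} For each strip-A seed, apply \eqref{eq:drop-comm-strip} and \eqref{eq:prod-C-expansion-mk}, which give
\[
(u_{-r}v)R^n_{n+p_0,0}z\equiv_U\sum_{a,b\ge-2n-1}C^{(n)}_{n+p_0,0}(r;a,b)\,v_au_bz.
\]
The left side lies in $U_A$, so the right side is $\equiv_{U_A}0$. Split the right side into the deep part, $a+b\le-3n-3$ (that is, $p+q\le n-1$ with $a=-2n-1+p$, $b=-2n-1+q$), and the remaining part. The deep part is $\sum_{(p,q)}K_A^{(n)}((p_0,r),(p,q))\,v_{-2n-1+p}u_{-2n-1+q}z$. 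Multiply the congruence for $(p_0,r)$ by $(K_A^{(n)})^{-1}((p,q),(p_0,r))$ and sum over $(p_0,r)$. The deep part then collapses to the single monomial $v_{-2n-1+p}u_{-2n-1+q}z$, and what remains is minus the sum of the shallow terms. This gives \eqref{eq:deep-elim-generaln} with the coefficients $E^{(n)}_{p,q}$ of \eqref{eq:Epq-direct-C-formula}.

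All sums are finite on a fixed $z$ by lower truncation, as in the proof of Proposition~\ref{prop:empirical-rank}. The coefficients involve only finitely many seeds, so the inversion is a finite linear combination.

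The main obstacle is the invertibility step. The specific task is to confirm that the $\mathcal A$-rows restricted to $\mathcal L_n$ in the proof of Proposition~\ref{prop:AB-prodreg-full-rank} coincide with $K_A^{(n)}$ after the two unitriangular changes. In particular, the row change $\rho_{p,q}$ must stay inside the deep triangle, which holds because that triangle is downward closed.
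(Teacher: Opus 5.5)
Your proposal is correct and follows the paper's proof essentially step for step. You identify the deep triangle with the grid $\mathcal L_n$ and transport the invertible low $\mathcal A$-block from the proof of Proposition~\ref{prop:AB-prodreg-full-rank} through the two unitriangular changes (the row change stays in the downward-closed triangle, the column change in initial $r$-lists); then you multiply the strip-A congruences by $(K_A^{(n)})^{-1}$ and separate the deep coordinates, exactly as the paper does.
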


\begin{proof}
For every strip~\({\rm(A)}\) seed, the product expansion
\eqref{eq:prod-C-expansion-mk}, together with
\eqref{eq:drop-comm-strip}, gives
\[
0\equiv_{U_A}
\sum_{a,b\ge-2n-1}
C^{(n)}_{n+p_0,0}(r;a,b)v_a u_b z.
\]
The coefficient matrix on the deep coordinates
\((a,b)=(-2n-1+p,-2n-1+q)\), \(p+q\le n-1\), is precisely
\(K_A^{(n)}\).

The matrix is invertible because the deep
triangle is exactly the grid \(\mathcal L_n\) in
\eqref{eq:formal-low-high-grids}.  In the proof of
Proposition~\ref{prop:AB-prodreg-full-rank}, the strip~\(\mathcal A\)
coefficient block on this grid is proved invertible.  The row change
\eqref{eq:formal-rho} stays inside the deep triangle because that
triangle is downward closed, and the column change
\eqref{eq:formal-hat-Gamma} stays inside each initial \(r\)-list.
Both changes are unitriangular.  Hence the original coefficient
matrix \(K_A^{(n)}\) is invertible as well.

Separating the deep coordinates from the remaining terms and solving
this square system gives \eqref{eq:Epq-direct-C-formula} and
\eqref{eq:deep-elim-generaln}.
\end{proof}

\begin{lem}[Strip C: diagonal elimination by the full triangular matrix]
\label{lem:generaln-stripC-deep-elim-explicit}
Fix \(n\ge1\), homogeneous \(u,v\in V\), and \(z\in V\).  On the
diagonal indexed by \(\eta\in\{0,\ldots,n-1\}\), designate the following
\(n-\eta\) pairs for elimination:
\begin{equation}\label{eq:stripC-elim-pair}
\begin{aligned}
(a_{\eta,\alpha},b_{\eta,\alpha})
:={}&\bigl(-2n-1+(2\eta+1+\alpha),\\
&\hspace{18mm}-2n-1+(n-\eta-1-\alpha)\bigr),
\qquad 0\le\alpha\le n-\eta-1.
\end{aligned}
\end{equation}
Let \(K_C^{(n)}\) be the square matrix whose rows are indexed by
\[
(m,r),\qquad 0\le m\le n-1,\quad 1\le r\le n-m,
\]
and whose columns are indexed by \((\eta,\alpha)\) as in
\eqref{eq:stripC-elim-pair}, with entries
\begin{equation}\label{eq:KC-direct-C-entry}
K_C^{(n)}\bigl((m,r),(\eta,\alpha)\bigr)
:=C^{(n)}_{m,0}
\bigl(r;a_{\eta,\alpha},b_{\eta,\alpha}\bigr).
\end{equation}
Then \(K_C^{(n)}\) is invertible.  For every pair \(a,b\ge-2n-1\)
which is not among the pairs \eqref{eq:stripC-elim-pair}, define
\begin{equation}\label{eq:Fka-full-inverse-formula}
\begin{aligned}
F^{(n)}_{\eta,\alpha}(a,b)
:={}&-
\sum_{m=0}^{n-1}\ \sum_{r=1}^{n-m}
\bigl(K_C^{(n)}\bigr)^{-1}
\bigl((\eta,\alpha),(m,r)\bigr)\\
&\hspace{35mm}\cdot C^{(n)}_{m,0}(r;a,b).
\end{aligned}
\end{equation}
Then
\begin{equation}\label{eq:stripC-elim-formula-general}
v_{a_{\eta,\alpha}}u_{b_{\eta,\alpha}}z
\equiv_{U_C}
\sum_{\substack{a,b\ge-2n-1\\(a,b)\text{ not eliminated}}}
F^{(n)}_{\eta,\alpha}(a,b)v_a u_b z.
\end{equation}
The only nonzero terms on the right occur among the \(2\eta+1\) surviving
pairs
\[
(a,b)=\bigl(-2n-1+p'',-2n-1+(n+\eta-p'')\bigr),
\qquad 0\le p''\le2\eta,
\]
on each strip-C diagonal, together with pairs satisfying
\(a+b\ge-2n-2\).  For fixed \(z\), all displayed sums are finite by lower truncation.  For a survivor on the same diagonal as the
eliminated pair, its coefficient has the closed form
\begin{equation}\label{eq:stripC-survivor-coeff}
\begin{aligned}
&F^{(n)}_{\eta,\alpha}
\bigl(-2n-1+p'',-2n-1+(n+\eta-p'')\bigr)\\
&\qquad=
(-1)^{\alpha+1}
\binom{2\eta+\alpha-p''}{\alpha}
\binom{n+\eta-p''}{n-\eta-1-\alpha}.
\end{aligned}
\end{equation}
\end{lem}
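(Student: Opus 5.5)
The approach mirrors Lemma~\ref{lem:generaln-stripA-deep-elim-explicit}, with the strip~(C) seeds in place of strip~(A). First, for each seed $(m,r)$ in \eqref{eq:C-strip}, the product expansion \eqref{eq:prod-C-expansion-mk} together with \eqref{eq:drop-comm-strip} gives
\[
0\equiv_{U_C}\sum_{a,b\ge-2n-1}C^{(n)}_{m,0}(r;a,b)\,v_au_bz .
\]
This is a square system of $\binom{n+1}{2}$ relations. I would split the unknowns into the designated pairs \eqref{eq:stripC-elim-pair} and all remaining pairs. If $K_C^{(n)}$ is invertible, multiplying by its inverse and moving the non-designated terms to the right yields \eqref{eq:Fka-full-inverse-formula} and \eqref{eq:stripC-elim-formula-general}. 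Finiteness for fixed $z$ follows from lower truncation, exactly as in Proposition~\ref{prop:empirical-rank}.

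The support claim comes from Lemma~\ref{lem:Phi-coeff-cutoff}. Write $a=-2n-1+p$ and $b=-2n-1+q$. By Corollary~\ref{cor:Phi-Laurent-and-shifted-poly}, $\Phi_{a,1,v}$ and $\Phi_{b,r,u}$ are polynomials in $x$ of lowest degree at least $-a-1=2n-p$ and $-b-r=2n+1-q-r$ respectively, when these are nonnegative. The product therefore has low order at least roughly $4n+1-p-q-r$, while \eqref{eq:Gamma-discrete2} extracts $[x^{m+2n+1}]$. Hence $C^{(n)}_{m,0}(r;a,b)=0$ when $p+q$ is too small relative to $m+r\le n$. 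This shows that only diagonals with $p+q\ge n-1$, i.e.\ $a+b\ge-3n-3$, appear in seeds of level $m$. I would organize the diagonals by $\eta$ with $p+q=n-1+\eta$ and check that each seed touches diagonal $\eta$ only through bounded positions in $p$. The remaining region $a+b\ge-2n-2$ collects everything else.

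For invertibility I would use a more structural route than a direct determinant. Apply the row and column changes of Lemma~\ref{lem:formal-weight-removal} and order the columns by $\eta$. The transformed entries \eqref{eq:formal-weight-free-entry} then become block triangular. On diagonal $\eta$, the seeds with $m+r-1=n-1-\eta$ should meet the designated pairs in an $(n-\eta)\times(n-\eta)$ block whose entries are products $\binom{\cdot}{s}\binom{\cdot}{\cdot}$ evaluated at the consecutive points $p=2\eta+1+\alpha$. Such a block is a polynomial evaluation matrix, as in Lemma~\ref{lem:formal-polynomial-grids}, and is therefore invertible. Alternatively, I would first try the direct argument: the $\mathcal C$ block on $\mathcal H_n$ in Proposition~\ref{prop:AB-prodreg-full-rank} is already invertible, and the designated pairs are exactly the high-grid window shifted to diagonal coordinates. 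It then remains to check that the designated set coincides with $\mathcal H_n$ under $(t,p)=(p+q,p)$. Here $t=n+\eta$ and $p=2\eta+1+\alpha$, and I would need to reconcile the range $p\le 2n-1-t$ with this indexing. A reflection $p\leftrightarrow$ (something) may be required.

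The main obstacle is the closed form \eqref{eq:stripC-survivor-coeff}. Within one diagonal, the block-triangular structure means the survivor coefficients are minus the inverse of the diagonal block applied to the survivor columns. My plan is to avoid computing the inverse. Instead I would guess the relation on diagonal $\eta$ using the given closed form, and verify it by showing that
\[
\sum_{p''}(-1)^{\alpha+1}\binom{2\eta+\alpha-p''}{\alpha}\binom{n+\eta-p''}{n-\eta-1-\alpha}\,C^{(n)}_{m,0}(r;\cdot)
\]
reproduces the seed relations. Concretely, the coefficients should be those of an explicit $x$-polynomial identity coming from Lemma~\ref{lem:Phi-y-packaging}, which in turn reduces to a Vandermonde/Chu convolution. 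Uniqueness then follows from invertibility of the block.
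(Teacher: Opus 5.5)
Your setup and your first invertibility route are essentially the paper's argument. A seed with $m+r=s$ only reaches diagonals with $p+q\ge 2n-s$, so ordering by diagonal makes $K_C^{(n)}$ block lower triangular. On the block $m+r=n-\eta$ only the lowest terms of the two $\Phi$-factors survive, which gives the binomial evaluation matrix $\binom{n+\eta+1+\alpha}{r-1}$ at consecutive nodes. The weight-removal change is unnecessary there. If you do use it, you must also note that the designated set is not downward closed; its downward closure adds only the deep triangle, where strip C vanishes.

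The genuine gap is the closed form \eqref{eq:stripC-survivor-coeff}, the step you call the main obstacle. Reducing it to the diagonal block is right, but you never specify the verification. Your displayed sum over $p''$ against $C^{(n)}_{m,0}(r;\cdot)$ is not the needed identity. The consistency condition is $\sum_{(\eta',\alpha')}K_C^{(n)}\bigl((m,r),(\eta',\alpha')\bigr)F^{(n)}_{\eta',\alpha'}(a,b)=-C^{(n)}_{m,0}(r;a,b)$, which for each survivor is a sum over the designated index. The missing observation is that a seed with $m+r=n-\eta$ has coefficient $\binom{n-\eta+p}{r-1}$ at \emph{every} pair $(p,n+\eta-p)$ of diagonal $\eta$, designated or surviving. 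So the row span of the diagonal block, restricted to that diagonal, is exactly the polynomials in $p$ of degree at most $n-\eta-1$. The combination equal to $\delta_{\alpha\alpha'}$ at $p=2\eta+1+\alpha'$ is therefore the Lagrange basis polynomial for the nodes $2\eta+1,\ldots,n+\eta$. Written as binomials, it is $L_\alpha(p)=(-1)^\alpha\binom{2\eta+\alpha-p}{\alpha}\binom{n+\eta-p}{n-\eta-1-\alpha}$, so $F^{(n)}_{\eta,\alpha}=-L_\alpha(p'')$. With this, your guess-and-verify route is complete and shorter than the paper's, which inverts $M^{(n,\eta)}=PQ$ explicitly and then applies Vandermonde and hockey-stick. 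No appeal to Lemma~\ref{lem:Phi-y-packaging} or Chu convolution is needed.

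Two smaller errors also need fixing. First, your degree count is off by one. The condition $4n+1-p-q-r\le m+2n+1$ with $m+r\le n$ forces $p+q\ge n$, i.e.\ $a+b\ge-3n-2$, and diagonal $\eta$ is $p+q=n+\eta$, not $n-1+\eta$. With your bound the diagonal $a+b=-3n-3$ is not excluded, so the stated support does not follow.

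Second, your alternative invertibility route fails as stated. The designated set is $\{p+q\ge n,\ p+2q\le 2n-1\}$, which is the mirror image under $p\leftrightarrow q$ of $\mathcal H_n=\{p+q\ge n,\ 2p+q\le 2n-1\}$, not $\mathcal H_n$ itself. The transformed $\mathcal C$ entries are not symmetric in $(p,q)$. The route can be rescued by noting that on $p+q\ge n$ these entries are evaluations of a basis of the polynomials of total degree at most $n-1$, and that unisolvence is preserved by the swap.
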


\begin{proof}
Each strip~\({\rm(C)}\) seed gives, modulo \(U_C\), the equation
\begin{equation}\label{eq:stripC-seed-expansion}
0\equiv_{U_C}
\sum_{a,b\ge-2n-1}C^{(n)}_{m,0}(r;a,b)v_a u_b z.
\end{equation}
The invertibility of \(K_C^{(n)}\) is a degree count: a seed with small
\(m+r\) cannot reach the deepest diagonals, which makes \(K_C^{(n)}\) block
triangular, and on each diagonal block only the lowest terms of the two
\(\Phi\)-factors survive, giving a Pascal-type matrix.

Partition its rows by \(s=m+r\in\{1,\ldots,n\}\).  Partition the
eliminated columns by \(s'=n-\eta\).  For a column on the diagonal indexed
by \(\eta\), consider the product
\[
\Phi_{a_{\eta,\alpha},1,v}(x)\Phi_{b_{\eta,\alpha},r,u}(x).
\]
Its lowest possible degree is
\[
3n+1-\eta-r.
\]
Consequently its coefficient in a row with \(m+r=s\) vanishes unless
\(\eta\ge n-s\), or equivalently unless \(s'\le s\).  With both block
indices ordered increasingly, \(K_C^{(n)}\) is block lower triangular.

On the diagonal block \(s'=s\), put \(\eta=n-s\) and
\(\mu=r-1=s-m-1\).  Only the lowest terms of the two \(\Phi\)-factors
can contribute to \eqref{eq:KC-direct-C-entry}, and
Lemma~\ref{lem:Phi-coeff-cutoff} gives the \(s\times s\) block
\begin{equation}\label{eq:stripC-Pascal-matrix}
M^{(n,\eta)}_{\mu,\alpha}
=\binom{n+\eta+1+\alpha}{\mu},
\qquad 0\le\mu,\alpha\le s-1.
\end{equation}
This is a binomial evaluation matrix at the consecutive nodes
\(n+\eta+1,\ldots,n+\eta+s\), and therefore
\begin{equation}\label{eq:stripC-Pascal-determinant}
\det M^{(n,\eta)}=1.
\end{equation}
Its inverse is
\begin{equation}\label{eq:stripC-inverse-formula}
\bigl(M^{(n,\eta)}\bigr)^{-1}_{\alpha,\mu}
=(-1)^{\alpha+\mu}
\sum_{j=\alpha}^{s-1}
\binom{j}{\alpha}
\binom{n+\eta+j-\mu}{j-\mu}.
\end{equation}
Indeed, Vandermonde convolution gives the factorization
\[
M^{(n,\eta)}=PQ,
\qquad
P_{\mu,j}=\binom{n+\eta+1}{\mu-j},
\qquad
Q_{j,\alpha}=\binom{\alpha}{j}.
\]
Here \(P\) is lower unitriangular and \(Q\) is upper unitriangular, which
also proves \eqref{eq:stripC-Pascal-determinant}.  Their standard binomial
inverses are
\[
(P^{-1})_{j,\mu}
=(-1)^{j-\mu}\binom{n+\eta+j-\mu}{j-\mu},
\qquad
(Q^{-1})_{\alpha,j}
=(-1)^{j-\alpha}\binom{j}{\alpha}.
\]
Multiplying \(Q^{-1}P^{-1}\) gives
\eqref{eq:stripC-inverse-formula}.

Every diagonal block is invertible, so the full block triangular matrix
\(K_C^{(n)}\) is invertible.  Solving all blocks simultaneously gives
the full-inverse formula \eqref{eq:Fka-full-inverse-formula}.

If the target is a survivor on the same diagonal \(\eta\), the lower-degree
vanishing just used shows that earlier blocks do not contribute.  Thus
only \eqref{eq:stripC-inverse-formula} is needed.  Substitution followed
by Vandermonde convolution reduces the inner sum to
\[
(-1)^j\binom{p''-2\eta-1}{j}
=\binom{2\eta-p''+j}{j}.
\]
The remaining sum is the hockey-stick identity, and gives
\eqref{eq:stripC-survivor-coeff}.  Finally, the same lowest-degree test
shows that strip~\({\rm(C)}\) has zero coefficient on the strip~A deep
triangle.  Hence the remaining possible support is exactly the support
stated in the lemma.
\end{proof}

The coefficients \eqref{eq:Epq-direct-C-formula} have a closed binomial form, which is
proved in Part~III \cite{Part3} together with its analogue for words of every length.

\section{Examples}
\label{sec:examples}

This section shows how the quadratic results are applied to a given vertex
operator algebra. Section~\ref{subsec:recipe} describes the procedure and the
resulting lists at levels one and two; the remaining subsections carry it out
for the Heisenberg, universal Virasoro, and affine $\mathfrak{sl}_2$ vertex
operator algebras, including the exact dimensions at levels one and two in the
first two cases.

\subsection{The procedure}
\label{subsec:recipe}
\label{subsec:general-S-low-level-eliminations}
\label{subsec:low-level-quadratic-examples}

Let $n\ge1$ and let $S$ be a finite homogeneous strong generating set of $V$
with $\ell$ elements, ordered once and for all.
\begin{enumerate}[label=\textup{(\arabic*)}]
\item \emph{Singular products.} For each ordered pair $(s,t)$ of elements of
$S$, compute $s_it$ for $i\ge0$. This determines the subspace
\eqref{eq:U-def} attached to the pair, which is contained in
$O_n^\circ(2,S,V)^+$ by \eqref{eq:pair-modulus-in-global-plus}. The vacuum
contributes nothing, since
$\mathbf1R^n_{m,k}z=[x^{m+2n+1}](1+x)^{n+k}z=0$ for $m\ge k\ge0$, and vacuum
descendants of a vector contribute nothing beyond that vector by
Lemma~\ref{lem:descendant-circle-subspace-inclusion}.
\item \emph{Pair lists.} For $s<t$, the $n(n+1)$ operators
$\mathrm{(A)}\cup\mathrm{(B)}$ of \eqref{eq:A-strip}--\eqref{eq:B-strip}, with
$(u,v)=(s,t)$, span $\mathcal Q_n((s,t))$ (Proposition~\ref{prop:empirical-rank}).
For $s=t$, the $2\lfloor(n+1)^2/4\rfloor$ odd-indexed members span
$\mathcal Q_n((s,s))$ (Proposition~\ref{prop:equal-input-odd-spanning}).
\item \emph{Global list.} By Corollary~\ref{cor:global-quadratic-bound}, these
lists together span $\mathcal Q_n(2,S)$, which gives the bound
\eqref{eq:global-quadratic-bound}.
\item \emph{Further relations.} Relations particular to $V$ can make listed
operators zero or dependent (Remark~\ref{rem:affine-level-one-rank-drop}).
Equality in the bounds requires a separate independence argument.
\end{enumerate}

At $n=1$, each pair $s<t$ contributes
\[
(s_{-1}t)R^1_{1,0}(\cdot),
\qquad
(s_{-1}t)R^1_{1,1}(\cdot),
\]
and each pair $(s,s)$ contributes the two corresponding operators
\[
(s_{-1}s)R^1_{1,0}(\cdot),
\qquad
(s_{-1}s)R^1_{1,1}(\cdot).
\]
Thus
\[
\dim\mathcal Q_1(2,S)\le\binom{\ell}{2}\cdot2+\ell\cdot2=\ell(\ell+1).
\]

At $n=2$, each pair $s<t$ contributes the six operators
\[
\begin{aligned}
&(s_{-1}t)R^2_{2,0}(\cdot),\quad
(s_{-2}t)R^2_{2,0}(\cdot),\quad
(s_{-1}t)R^2_{3,0}(\cdot),\\
&(s_{-1}t)R^2_{2,2}(\cdot),\quad
(s_{-2}t)R^2_{2,2}(\cdot),\quad
(s_{-1}t)R^2_{3,2}(\cdot),
\end{aligned}
\]
and each pair $(s,s)$ the four odd-indexed operators
\[
(s_{-1}s)R^2_{2,0}(\cdot),\quad
(s_{-1}s)R^2_{3,0}(\cdot),\quad
(s_{-1}s)R^2_{2,2}(\cdot),\quad
(s_{-1}s)R^2_{3,2}(\cdot).
\]
Thus
\[
\dim\mathcal Q_2(2,S)\le\binom{\ell}{2}\cdot6+\ell\cdot4=3\ell^2+\ell.
\]

\emph{Deep modes.} If one also wants the ordered products of deep modes in
terms of the others, Lemmas~\ref{lem:generaln-stripA-deep-elim-explicit}
and~\ref{lem:generaln-stripC-deep-elim-explicit} give them for each pair.
At level \(n=1\), each strip consists of one seed and eliminates one
ordered monomial.  This is the first consistency check on the formulas,
but the first nontrivial simultaneous systems occur at level \(n=2\).
There each strip has three seeds and eliminates three
ordered monomials.  Suppose that \(\wt u=\wt v\), and order the strip-A rows as
\[
(m,r)=(2,1),(2,2),(3,1)
\]
and its eliminated columns as
\[
(a,b)=(-5,-5),(-5,-4),(-4,-5).
\]
Direct substitution in \eqref{eq:KA-direct-C-entry} gives
\begin{equation}\label{eq:n2-equal-weight-KA}
K_A^{(2)}
=
\begin{pmatrix}
0&1&1\\
4&3\wt u+3&3\wt u+2\\
1&\wt u&\wt u
\end{pmatrix},
\qquad
\bigl(K_A^{(2)}\bigr)^{-1}
=
\begin{pmatrix}
-\wt u&0&1\\
\wt u-2&1&-4\\
3-\wt u&-1&4
\end{pmatrix}.
\end{equation}
Order the strip-C rows as
\[
(m,r)=(0,1),(1,1),(0,2)
\]
and its eliminated columns as
\[
(a,b)=(-2,-5),(-4,-4),(-3,-5).
\]
Then
\begin{equation}\label{eq:n2-equal-weight-KC}
K_C^{(2)}
=
\begin{pmatrix}
1&0&0\\
\wt u&1&1\\
3\wt u+2&3&4
\end{pmatrix},
\qquad
\bigl(K_C^{(2)}\bigr)^{-1}
=
\begin{pmatrix}
1&0&0\\
2-\wt u&4&-1\\
-2&-3&1
\end{pmatrix}.
\end{equation}
Equations \eqref{eq:Epq-direct-C-formula} and
\eqref{eq:Fka-full-inverse-formula}, together with
\eqref{eq:n2-equal-weight-KA} and \eqref{eq:n2-equal-weight-KC},
give all six \(n=2\) eliminations directly from the coefficients
\(C^{(2)}_{m,0}(r;a,b)\), with \(\wt u=1\) for weight-one currents and \(\wt u=2\) for the
conformal vector.
These matrices are used in the examples below. For unequal weights one uses the
defining entries \eqref{eq:KA-direct-C-entry} and \eqref{eq:KC-direct-C-entry}
without further change.

\subsection{The rank-one Heisenberg vertex operator algebra}
\label{subsec:heisenberg-low-level-eliminations}

Let \(M(1)\) be the rank-one Heisenberg vertex operator algebra and
\(S=\{h\}\), with \(h\) a normalized weight-one generator. As a vector space
\(M(1)\) is the polynomial algebra in the commuting modes
\(h_{-1},h_{-2},\ldots\) applied to \(\mathbf1\), and \(h_0=0\).

\emph{Singular products.} \(h_0h=0\), \(h_1h=\mathbf1\), and \(h_ih=0\) for
\(i\ge2\). Hence the subspace attached to \((h,h)\) is
\(U=O_n^\circ(h,M(1))\).

\emph{The quotient by \(U\).} By \eqref{eq:Rmk-def} with \(\wt h=1\),
\begin{equation}\label{eq:heisenberg-linear-relation-polynomial}
hR^n_{M,0}
=\sum_{j=0}^{n+1}\binom{n+1}{j}h_{-(M+2n+2-j)}
\qquad(M\ge0),
\end{equation}
which involves only negative modes and so acts by multiplication. By
\eqref{eq:Rmk-binomial-reduction}, \(U\) is the ideal generated by these
polynomials. The one with index \(M\) has leading mode \(h_{-(M+2n+2)}\) with
coefficient \(1\), so the modes \(h_{-j}\), \(j\ge2n+2\), are eliminated
recursively and
\begin{equation}\label{eq:heisenberg-linear-quotient-low-levels}
M(1)/O_n^\circ(h,M(1))\cong\C[h_{-1},\ldots,h_{-2n-1}]\mathbf1 .
\end{equation}

\emph{The quadratic operators.} Since \(\wt(h_{-1}h)=2\), \eqref{eq:Rmk-def}
gives
\[
(h_{-1}h)R^n_{m,k}=\sum_{i=0}^{n+k+2}\binom{n+k+2}{i}(h_{-1}h)_{i-m-2n-2},
\]
and \((h_{-1}h)_s=\sum_{r\in\Z}{:}h_rh_{s-1-r}{:}\) because
\(Y(h_{-1}h,x)={:}Y(h,x)Y(h,x){:}\). Put \(N_i=m+2n+3-i\ge2\). As \(h_0=0\),
\begin{equation}\label{eq:heisenberg-seed-vacuum-general}
(h_{-1}h)R^n_{m,k}v
=\sum_{i=0}^{n+k+2}\binom{n+k+2}{i}
\Biggl(\sum_{\substack{a,b\ge0\\a+b=N_i-2}}h_{-a-1}h_{-b-1}v
+2\sum_{q\ge1}h_{-N_i-q}h_qv\Biggr).
\end{equation}
By Vandermonde's identity
\(\binom{n+k+2}{i}=\sum_{b+j=i}\binom{k+1}{b}\binom{n+1}{j}\) and
\eqref{eq:heisenberg-linear-relation-polynomial}, the second part is
\begin{equation}\label{eq:heisenberg-positive-part-in-linear-relations}
2\sum_{q\ge1}\sum_{b=0}^{k+1}\binom{k+1}{b}\,
hR^n_{q+m+1-b,0}\bigl(h_qv\bigr)\in U,
\qquad q+m+1-b\ge q+m-k\ge1.
\end{equation}
Hence, for every \(v\in M(1)\),
\begin{equation}\label{eq:heisenberg-seed-arbitrary-vector-reduction}
(h_{-1}h)R^n_{m,k}v
\equiv_{U}
\Biggl(
\sum_{i=0}^{n+k+2}\binom{n+k+2}{i}
\sum_{\substack{a,b\ge0\\a+b=m+2n+1-i}}
h_{-a-1}h_{-b-1}
\Biggr)v,
\end{equation}
and reducing the polynomial in \eqref{eq:heisenberg-linear-quotient-low-levels}
gives the odd-indexed operators of Proposition~\ref{prop:equal-input-odd-spanning}
explicitly: at \(n=1\),
\begin{align}
(h_{-1}h)R^1_{1,0}v
&\equiv_{O_1^\circ(h,M(1))}
(h_{-3}+h_{-2})^2v,
\label{eq:heisenberg-n1-vacuum-A}\\
(h_{-1}h)R^1_{1,1}v
&\equiv_{O_1^\circ(h,M(1))}
(h_{-3}+2h_{-2}+h_{-1})^2v,
\label{eq:heisenberg-n1-vacuum-B}
\end{align}
and at \(n=2\)
\begin{align}
(h_{-1}h)R^2_{2,0}v
&\equiv_{O_2^\circ(h,M(1))}
2(h_{-4}+h_{-3})(h_{-5}+2h_{-4}+h_{-3})v,
\label{eq:heisenberg-n2-vacuum-A1}\\
(h_{-1}h)R^2_{3,0}v
&\equiv_{O_2^\circ(h,M(1))}
(h_{-5}-h_{-3})(h_{-5}+2h_{-4}+h_{-3})v,
\label{eq:heisenberg-n2-vacuum-A2}\\
(h_{-1}h)R^2_{2,2}v
&\equiv_{O_2^\circ(h,M(1))}
2(h_{-5}+3h_{-4}+3h_{-3}+h_{-2})
(h_{-4}+3h_{-3}+3h_{-2}+h_{-1})v,
\label{eq:heisenberg-n2-vacuum-B1}\\
(h_{-1}h)R^2_{3,2}v
&\equiv_{O_2^\circ(h,M(1))}
(h_{-5}+3h_{-4}+3h_{-3}+h_{-2})^2v.
\label{eq:heisenberg-n2-vacuum-B2}
\end{align}

\emph{Dimension.} By Proposition~\ref{prop:equal-input-odd-spanning} these
operators span \(\mathcal Q_1((h,h))\) and \(\mathcal Q_2((h,h))\). They act on
\eqref{eq:heisenberg-linear-quotient-low-levels} by multiplication by the
displayed polynomials, so applying them to \(v=\mathbf1\) shows that they are
independent exactly when the polynomials are. The two at \(n=1\) are, since
only the second involves \(h_{-1}\). At \(n=2\), the coefficient matrix of the
four polynomials on the monomials \(h_{-2}h_{-1}\), \(h_{-2}^2\),
\(h_{-3}^2\), \(h_{-4}h_{-3}\) is
\begin{equation}\label{eq:heisenberg-n2-independence-minor}
\begin{pmatrix}
0&0&2&6\\
0&0&-1&-2\\
2&6&18&24\\
0&1&9&18
\end{pmatrix},
\qquad
\det=4.
\end{equation}
Therefore
\begin{equation}\label{eq:heisenberg-low-level-exact-dimensions}
\boxed{
	\dim\mathcal Q_1((h,h))=2,
	\qquad
	\dim\mathcal Q_2((h,h))=4.}
\end{equation}

\emph{Eliminations.} At \(n=2\), set \(\wt u=\wt v=1\) in
\eqref{eq:n2-equal-weight-KA} and \eqref{eq:n2-equal-weight-KC}.
For example, solving the strip-A system for its deepest product gives
\begin{equation}\label{eq:n2-heisenberg-stripA}
\begin{aligned}
h_{-5}h_{-5}z\equiv_{U_A}
\bigl(&2h_{-5}h_{-3}+4h_{-4}h_{-4}
+8h_{-4}h_{-3}+3h_{-3}h_{-3}\bigr)z.
\end{aligned}
\end{equation}
Solve the full strip-C system.  By \([h_p,h_q]=p\delta_{p+q,0}\), negative modes commute;
in particular the eliminated monomial \(h_{-2}h_{-5}z\) equals \(h_{-5}h_{-2}z\), which also
occurs on the right-hand side.  Moving it to the left doubles its coefficient, and dividing
by \(2\) gives
\begin{equation}\label{eq:n2-heisenberg-stripC}
\begin{aligned}
h_{-2}h_{-5}z\equiv_{U_C}-\bigl(
&h_{-5}h_{-1}+h_{-4}h_{-3}+4h_{-4}h_{-2}
+3h_{-4}h_{-1}\\
&+2h_{-3}h_{-3}+6h_{-3}h_{-2}+3h_{-3}h_{-1}
+2h_{-2}h_{-2}+h_{-2}h_{-1}\bigr)z.
\end{aligned}
\end{equation}
These are two of the six \(n=2\) eliminations; the other four follow
from the same two displayed inverse matrices.

\subsection{The universal Virasoro vertex operator algebra}
\label{subsec:virasoro-low-level-eliminations}

Let \(V^{\mathrm{Vir}}(c,0)\) be the universal Virasoro vertex operator
algebra of central charge \(c\) and \(S=\{\omega\}\).

\emph{Singular products.} \(\omega_0\omega=L(-1)\omega=\omega_{-2}\mathbf1\),
\(\omega_1\omega=2\omega\), \(\omega_2\omega=0\),
\(\omega_3\omega=\frac c2\mathbf1\), and \(\omega_i\omega=0\) for \(i\ge4\).
By Lemma~\ref{lem:descendant-circle-subspace-inclusion} and the vanishing of
the vacuum slice, the subspace attached to \((\omega,\omega)\) is
\[
U=O_n^\circ\bigl(\omega,V^{\mathrm{Vir}}(c,0)\bigr),
\]
as in \cite[Example~5.2]{Part1}.

\emph{Lists.} The odd-indexed lists are those of the Heisenberg example with
\(h\) replaced by \(\omega\): two operators at \(n=1\) and the four operators
\((\omega_{-1}\omega)R^2_{m,k}\), \(m\in\{2,3\}\), \(k\in\{0,2\}\), at \(n=2\).
They are written out explicitly in \cite[Example~5.2]{Part1}.

\emph{Dimension.} By Proposition~\ref{prop:equal-input-odd-spanning} these
operators span \(\mathcal Q_1((\omega,\omega))\) and
\(\mathcal Q_2((\omega,\omega))\). Their independence modulo \(U\) is checked
on leading symbols. The one point needing care is that the leading symbols of
the elements of \(U\) form exactly the ideal generated by the leading symbols
of the length-one relations; otherwise a cancellation inside \(U\) could hide a
relation. Filter \(V^{\mathrm{Vir}}(c,0)\) by the
number of negative Virasoro modes.  Its associated graded space is the
polynomial algebra
\[
\operatorname{gr}V^{\mathrm{Vir}}(c,0)
\cong\C[x_2,x_3,\ldots],
\qquad
x_j=\sigma(L(-j)\mathbf1).
\]
For \(m\ge0\), the leading symbol of the length-one relation
\(\omega R^n_{m,0}(\cdot)\) is multiplication by
\begin{equation}\label{eq:virasoro-leading-linear-relation}
\sum_{i=0}^{n+2}\binom{n+2}{i}x_{m+2n+3-i}.
\end{equation}
Thus the corresponding leading-symbol ideal eliminates \(x_j\) for
\(j\ge5\) at \(n=1\) and for \(j\ge7\) at \(n=2\).
The two resulting polynomial quotients are therefore
\[
\C[x_2,x_3,x_4]
\qquad\text{and}\qquad
\C[x_2,x_3,x_4,x_5,x_6],
\]
respectively.  The leading symbols of the elements of \(U\) are exactly the elements of
these relation ideals.  Indeed, \(\omega R^n_{m,0}\) acts by the operator
\(\sum_{i=0}^{n+2}\binom{n+2}{i}L(i-m-2n-3)\), and by \cite[Lemma~2.3]{Part1} the values of
these operators, \(m\ge0\), on all of \(V^{\mathrm{Vir}}(c,0)\) span \(U\).  They lie in the
Lie algebra spanned by the \(L(j)\), \(j\le-2\), on which the central term vanishes.  Under
\(L(j)\mapsto-t^{j+1}\frac{d}{dt}\) the operator with index \(m\) corresponds to
\(-t^{-m-2n-2}(1+t)^{n+2}\frac{d}{dt}\), so their span corresponds to the vector fields
\(H(t)f(t^{-1})\frac{d}{dt}\), \(f\in\C[t^{-1}]\), with
\(H(t)=t^{-2n-2}(1+t)^{n+2}\).  This space is closed under the bracket:
\(\bigl[Hf\frac{d}{dt},Hg\frac{d}{dt}\bigr]=H\cdot H\bigl(f\frac{dg}{dt}-g\frac{df}{dt}\bigr)\frac{d}{dt}\),
and \(H\cdot t^{-2}\C[t^{-1}]=(1+t^{-1})^{n+2}t^{-n-2}\C[t^{-1}]\subseteq\C[t^{-1}]\).
Since \(V^{\mathrm{Vir}}(c,0)\) is free of rank one over the enveloping algebra of the
\(L(j)\), \(j\le-2\), a PBW basis whose first factors run over a basis of this subalgebra shows
that the associated graded space of \(U\) is the ideal generated by the symbols
\eqref{eq:virasoro-leading-linear-relation}.  Consequently, independence of leading
quadratic symbols in these smaller polynomial quotients implies independence modulo \(U\).

At \(n=1\), the coefficient matrix of the leading quadratic symbols of
the two odd-mode seeds on the monomials \(x_2^2,x_3x_2\) is
\[
\begin{pmatrix}
1&4\\
9&18
\end{pmatrix},
\qquad
\det=-18.
\]
At \(n=2\), the coefficient matrix of the leading quadratic symbols of
the four odd-mode residue operators in the \(n=2\) list of
\cite[Example~5.2]{Part1}, on
\[
x_5x_4,\qquad x_5^2,\qquad x_6x_5,\qquad x_6^2
\]
is
\begin{equation}\label{eq:virasoro-n2-independence-minor}
\begin{pmatrix}
18&6&2&0\\
0&3&4&1\\
44&8&2&0\\
48&16&8&1
\end{pmatrix},
\qquad
\det=4.
\end{equation}
Both matrices are invertible.  Therefore the two \(n=1\) and four
\(n=2\) vacuum values are linearly independent modulo \(U\).
Hence
\begin{equation}\label{eq:virasoro-low-level-exact-dimensions}
\boxed{
\dim\mathcal Q_1((\omega,\omega))=2,
\qquad
\dim\mathcal Q_2((\omega,\omega))=4}
\end{equation}
for \(V^{\mathrm{Vir}}(c,0)\). For a quotient of \(V^{\mathrm{Vir}}(c,0)\) in
which \(\omega\ne0\), the same operators span, but a null-vector ideal may impose
additional relations, so only \(\dim\mathcal Q_2((\omega,\omega))\le4\) is
asserted.

\emph{Eliminations and the redundant pair.} At \(n=2\) the general lemmas give
three strip-A and three strip-C eliminations. With equal inputs one in each
strip is redundant; we record all six and then show this directly.
The singular-iterate contributions lie in \(U\), and the commutator
contribution is zero by Theorem~\ref{thm:main-correction-results}\textup{(i)};
\(U_A\) and \(U_C\) are \eqref{eq:UA-diagonal-elimination} and
\eqref{eq:UC-diagonal-elimination} for \(u=v=\omega\). Fix arbitrary
\(z\in V^{\mathrm{Vir}}(c,0)\).

The three strip-A eliminations are
\begin{equation}\label{eq:n2-virasoro-stripA-1}
\begin{aligned}
\omega_{-5}\omega_{-5}z\equiv_{U_A}{}&\bigl(3\omega_{-5}\omega_{-3}+2\omega_{-5}\omega_{-2}+9\omega_{-4}\omega_{-4}\\
&\quad+18\omega_{-4}\omega_{-3}+9\omega_{-4}\omega_{-2}+3\omega_{-3}\omega_{-5}\\
&\quad+18\omega_{-3}\omega_{-4}+27\omega_{-3}\omega_{-3}+12\omega_{-3}\omega_{-2}\\
&\quad+2\omega_{-2}\omega_{-5}+9\omega_{-2}\omega_{-4}+12\omega_{-2}\omega_{-3}\\
&\quad+5\omega_{-2}\omega_{-2}\bigr)z.
\end{aligned}
\end{equation}
\begin{equation}\label{eq:n2-virasoro-stripA-2}
\begin{aligned}
\omega_{-5}\omega_{-4}z\equiv_{U_A}{}&\bigl(-2\omega_{-5}\omega_{-3}-\omega_{-5}\omega_{-2}-3\omega_{-4}\omega_{-4}\\
&\quad-6\omega_{-4}\omega_{-3}-3\omega_{-4}\omega_{-2}-3\omega_{-3}\omega_{-4}\\
&\quad-6\omega_{-3}\omega_{-3}-3\omega_{-3}\omega_{-2}-\omega_{-2}\omega_{-4}\\
&\quad-2\omega_{-2}\omega_{-3}-\omega_{-2}\omega_{-2}\bigr)z.
\end{aligned}
\end{equation}
\begin{equation}\label{eq:n2-virasoro-stripA-3}
\begin{aligned}
\omega_{-4}\omega_{-5}z\equiv_{U_A}{}&\bigl(-3\omega_{-4}\omega_{-4}-3\omega_{-4}\omega_{-3}-\omega_{-4}\omega_{-2}\\
&\quad-2\omega_{-3}\omega_{-5}-6\omega_{-3}\omega_{-4}-6\omega_{-3}\omega_{-3}\\
&\quad-2\omega_{-3}\omega_{-2}-\omega_{-2}\omega_{-5}-3\omega_{-2}\omega_{-4}\\
&\quad-3\omega_{-2}\omega_{-3}-\omega_{-2}\omega_{-2}\bigr)z.
\end{aligned}
\end{equation}
The three strip-C eliminations are
\begin{equation}\label{eq:n2-virasoro-stripC-1}
\begin{aligned}
\omega_{-2}\omega_{-5}z\equiv_{U_C}{}&\bigl(-\omega_{-5}\omega_{-2}-2\omega_{-5}\omega_{-1}-\omega_{-5}\omega_{0}\\
&\quad-\omega_{-4}\omega_{-3}-6\omega_{-4}\omega_{-2}-9\omega_{-4}\omega_{-1}\\
&\quad-4\omega_{-4}\omega_{0}-\omega_{-3}\omega_{-4}-6\omega_{-3}\omega_{-3}\\
&\quad-15\omega_{-3}\omega_{-2}-16\omega_{-3}\omega_{-1}-6\omega_{-3}\omega_{0}\\
&\quad-6\omega_{-2}\omega_{-4}-15\omega_{-2}\omega_{-3}-20\omega_{-2}\omega_{-2}\\
&\quad-14\omega_{-2}\omega_{-1}-4\omega_{-2}\omega_{0}-2\omega_{-1}\omega_{-5}\\
&\quad-9\omega_{-1}\omega_{-4}-16\omega_{-1}\omega_{-3}-14\omega_{-1}\omega_{-2}\\
&\quad-6\omega_{-1}\omega_{-1}-\omega_{-1}\omega_{0}-\omega_{0}\omega_{-5}\\
&\quad-4\omega_{0}\omega_{-4}-6\omega_{0}\omega_{-3}-4\omega_{0}\omega_{-2}\\
&\quad-\omega_{0}\omega_{-1}\bigr)z.
\end{aligned}
\end{equation}
\begin{equation}\label{eq:n2-virasoro-stripC-2}
\begin{aligned}
\omega_{-4}\omega_{-4}z\equiv_{U_C}{}&\bigl(-2\omega_{-5}\omega_{-3}-5\omega_{-5}\omega_{-2}-4\omega_{-5}\omega_{-1}\\
&\quad-\omega_{-5}\omega_{0}-10\omega_{-4}\omega_{-3}-21\omega_{-4}\omega_{-2}\\
&\quad-16\omega_{-4}\omega_{-1}-4\omega_{-4}\omega_{0}-3\omega_{-3}\omega_{-4}\\
&\quad-18\omega_{-3}\omega_{-3}-33\omega_{-3}\omega_{-2}-24\omega_{-3}\omega_{-1}\\
&\quad-6\omega_{-3}\omega_{0}-3\omega_{-2}\omega_{-4}-14\omega_{-2}\omega_{-3}\\
&\quad-23\omega_{-2}\omega_{-2}-16\omega_{-2}\omega_{-1}-4\omega_{-2}\omega_{0}\\
&\quad-\omega_{-1}\omega_{-4}-4\omega_{-1}\omega_{-3}-6\omega_{-1}\omega_{-2}\\
&\quad-4\omega_{-1}\omega_{-1}-\omega_{-1}\omega_{0}\bigr)z.
\end{aligned}
\end{equation}
\begin{equation}\label{eq:n2-virasoro-stripC-3}
\begin{aligned}
\omega_{-3}\omega_{-5}z\equiv_{U_C}{}&\bigl(\omega_{-5}\omega_{-3}+5\omega_{-5}\omega_{-2}+7\omega_{-5}\omega_{-1}\\
&\quad+3\omega_{-5}\omega_{0}+6\omega_{-4}\omega_{-3}+24\omega_{-4}\omega_{-2}\\
&\quad+30\omega_{-4}\omega_{-1}+12\omega_{-4}\omega_{0}-\omega_{-3}\omega_{-4}\\
&\quad+15\omega_{-3}\omega_{-3}+47\omega_{-3}\omega_{-2}+50\omega_{-3}\omega_{-1}\\
&\quad+18\omega_{-3}\omega_{0}+6\omega_{-2}\omega_{-4}+28\omega_{-2}\omega_{-3}\\
&\quad+50\omega_{-2}\omega_{-2}+40\omega_{-2}\omega_{-1}+12\omega_{-2}\omega_{0}\\
&\quad+3\omega_{-1}\omega_{-5}+15\omega_{-1}\omega_{-4}+30\omega_{-1}\omega_{-3}\\
&\quad+30\omega_{-1}\omega_{-2}+15\omega_{-1}\omega_{-1}+3\omega_{-1}\omega_{0}\\
&\quad+2\omega_{0}\omega_{-5}+8\omega_{0}\omega_{-4}+12\omega_{0}\omega_{-3}\\
&\quad+8\omega_{0}\omega_{-2}+2\omega_{0}\omega_{-1}\bigr)z.
\end{aligned}
\end{equation}
When \(u=v=\omega\), specializing the parity reduction
\eqref{eq:equal-input-parity-reduction} at \(n=2\) gives
\[
\begin{aligned}
2(\omega_{-2}\omega)R^2_{2,0}z
&\equiv_U(\omega_{-1}\omega)R^2_{2,0}z
 +8(\omega_{-1}\omega)R^2_{3,0}z,\\
2(\omega_{-2}\omega)R^2_{0,0}z
&\equiv_U-(\omega_{-1}\omega)R^2_{0,0}z
 +6(\omega_{-1}\omega)R^2_{1,0}z.
\end{aligned}
\]
Thus \(U_A\) is generated over \(U\) by its two \(r=1\) strip-A
operators, and \(U_C\) by its two \(r=1\) strip-C operators.
The same redundancy can be seen directly in the six eliminations.

Subtracting \eqref{eq:n2-virasoro-stripA-3} from
\eqref{eq:n2-virasoro-stripA-2} and commuting the reversed products
gives \(-\omega_{-10}z\) on the left and
\[
(4\omega_{-9}+6\omega_{-8}+4\omega_{-7}+\omega_{-6})z
\]
on the right.  Consequently the difference between the two complete
relations is
\[
-\bigl(\omega_{-10}+4\omega_{-9}+6\omega_{-8}
 +4\omega_{-7}+\omega_{-6}\bigr)z
=-\omega R^2_{4,0}z\equiv_U0.
\]
Hence \eqref{eq:n2-virasoro-stripA-3} adds no relation beyond
\eqref{eq:n2-virasoro-stripA-2} modulo \(U_A\).  Its left-hand side
is recovered by the exact identity
\[
\omega_{-4}\omega_{-5}z
=\omega_{-5}\omega_{-4}z+\omega_{-10}z.
\]

Likewise, after ordering the products, the left-hand side minus the
right-hand side of \eqref{eq:n2-virasoro-stripC-3} is
\(-5/2\) times the corresponding difference in
\eqref{eq:n2-virasoro-stripC-1}, plus
\[
2\omega R^2_{3,0}z+3\omega R^2_{2,0}z
 +2\omega R^2_{1,0}z+\tfrac12\omega R^2_{0,0}z.
\]
Each added term belongs to \(U\), so
\eqref{eq:n2-virasoro-stripC-3} adds no relation modulo \(U_C\).
Its left-hand side satisfies
\[
\omega_{-3}\omega_{-5}z
=\omega_{-5}\omega_{-3}z+2\omega_{-9}z.
\]
Thus the six formulas obtained from the general lemmas reduce to the
four eliminations \eqref{eq:n2-virasoro-stripA-1},
\eqref{eq:n2-virasoro-stripA-2}, \eqref{eq:n2-virasoro-stripC-1}, and
\eqref{eq:n2-virasoro-stripC-2}.  The complete differences vanish
modulo \(U\); the individual commutator terms need not.
All these identities hold for arbitrary \(z\).

\subsection{The affine \texorpdfstring{$\mathfrak{sl}_2$}{sl2} vertex operator algebra}
\label{subsec:affine-sl2-low-level-eliminations}

Let \(V=L_k(\mathfrak{sl}_2)\) at positive integral level \(k\), with
its minimal weight-one strong generating set \(S=\{e,h,f\}\), ordered
\(e<h<f\).

\emph{Singular products.} For \(x,y\in S\), the only possibly nonzero
singular products are \(x_0y=[x,y]\) and \(x_1y=k\langle x,y\rangle\mathbf1\).
They lie in the span of \(S\) or in the vacuum line, so every pair subspace is
contained in \(\sum_{s\in S}O_n^\circ(s,V)\subseteq O_n^\circ(2,S,V)^+\).

\emph{Lists and bound.} Section~\ref{subsec:recipe} with \(\ell=3\) gives
\[
\dim\mathcal Q_1(2,S)\le12,
\qquad
\dim\mathcal Q_2(2,S)\le30.
\]
All ordered pairs have equal weights, so the \(n=2\) eliminations use
\eqref{eq:n2-equal-weight-KA} and \eqref{eq:n2-equal-weight-KC} with
\(\wt u=\wt v=1\).

\emph{Level one.} The VOA \(L_1(\mathfrak{sl}_2)\) has the additional
relation \eqref{eq:affine-level-one-descendant-relation}, so
Remark~\ref{rem:affine-level-one-rank-drop} gives, for every \(n\ge1\),
\(\dim\mathcal Q_n((e,h))\le n(n-1)\). In particular,
\begin{equation}\label{eq:affine-eh-low-level-rank-drop}
\mathcal Q_1((e,h))=0,
\qquad
\dim\mathcal Q_2((e,h))\le2.
\end{equation}
The Chevalley involution
\(e\leftrightarrow f,\ h\mapsto-h\) sends
\(e_{-1}h=-e_{-2}\mathbf1\) to
\(f_{-1}h=f_{-2}\mathbf1\).  Since
\([h_{-1},f_{-1}]=-2f_{-2}\), it follows that
\(h_{-1}f=-f_{-2}\mathbf1\), and the same descendant argument gives
\begin{equation}\label{eq:affine-hf-low-level-rank-drop}
\mathcal Q_1((h,f))=0,
\qquad
\dim\mathcal Q_2((h,f))\le2.
\end{equation}
Integrability also gives \(e_{-1}e=0\) and \(f_{-1}f=0\). For \(n=1,2\) the
odd-indexed lists for \((e,e)\) and \((f,f)\) contain only residue operators
of these two vectors, so they contribute nothing. The remaining contributions
are bounded by
\[
\begin{array}{c|cc}
\text{pair}&n=1&n=2\\ \hline
(e,h),(h,f)&0&2\text{ each}\\
(e,f)&2&6\\
(h,h)&2&4.
\end{array}
\]
Consequently, for \(L_1(\mathfrak{sl}_2)\),
\begin{equation}\label{eq:sl2-level-one-stronger-low-level-bounds}
\boxed{
\dim\mathcal Q_1(2,S)\le4,
\qquad
\dim\mathcal Q_2(2,S)\le14.}
\end{equation}

\section{Concluding remarks}
\label{sec:conclusion}

The termwise correction theorem resolves the correction-residue
conjecture of Part~I at every word length and every level $n\ge0$.
The normalized diagonal kernel expresses each correction as a finite
sum of product residues of a contracted word with fewer $Y_L$ factors;
for the correction with $t=0$ these vanish, so the first $Y_L$ correction is
zero. The proof tracks every error as a shorter-word or
singular-word circle relation, without a degree hypothesis.

At positive level, the quadratic coefficient bases give a uniform
spanning set of $n(n+1)$ operators for each homogeneous ordered pair
modulo its length-one and singular-iterate relations. Equal inputs
reduce this list to its odd-indexed members. The formal coefficient
dimensions are exact; the actual operator bounds can be strict, as
the affine example shows. The two diagonal eliminations retain
explicit coefficients for reducing selected deep quadratic products.

Part~III \cite{Part3} uses these results to study longer product relations,
equality with the plus space beyond length $n+1$, and the finite-length
conjecture under its stated hypotheses. The original-letter tracking
of Theorem~\ref{thm:correction-residues-plus} is needed for that induction. Explicit recurrences for the
quadratic reduction coefficients are recorded in the ancillary file
\texttt{anc/\allowbreak explicit-\allowbreak quadratic-\allowbreak recurrences.pdf}.

\section*{Acknowledgments}
The author thanks Katrina Barron for discussions.

\section*{Code availability}
Exact-arithmetic Python scripts (standard library only) are supplied as
ancillary files with the arXiv source; \texttt{anc/README.txt} describes
what each script checks and the ranges tested. They check the normalized
diagonal kernel, the weight-removal identity and the spanning
coefficients, the transposition formula for $J$ (including negative
conformal weights), the strip eliminations, and the computations of
Section~\ref{sec:examples}. The ancillary file
\texttt{anc/\allowbreak explicit-\allowbreak quadratic-\allowbreak recurrences.pdf}
records explicit quadratic recurrences and $\Lambda$-triangularization
formulas. The proofs do not depend on these computations.

\section*{Declaration of generative AI and AI-assisted technologies in the manuscript preparation process}
During the preparation of this work the author used ChatGPT (OpenAI) and Claude (Anthropic) in order to assist with drafting and revising the manuscript, including its mathematical exposition, and to check identities by exact computer calculation. After using these tools, the author reviewed and edited the content as needed and takes full responsibility for the content of the published article.

\begingroup
\raggedright

\endgroup

\end{document}